\numberwithin{equation}{section}
\newtheorem{cor}[equation]{Corollary}
\newtheorem{lem}[equation]{Lemma}
\newtheorem{prop}[equation]{Proposition}
\newtheorem{thm}[equation]{Theorem}
\newtheorem{fact}[equation]{Fact}
\newtheorem{claim}[equation]{Claim}
\newtheorem{Example}[equation]{Example}
\newtheorem{remark}[equation]{Remark}
\newenvironment{rmk}{\begin{remark}\rm}{\end{remark}}
\def\co{\colon\thinspace}
\newcommand{\e}{\varepsilon}
\newcommand{\tinycirc}{{_{^\circ}}}
\def\a{\alpha}
\def\G{\Gamma}
\def\g{\gamma}
\def\o{\omega}
\def\d{\partial}
\def\o{\omega}
\def\s{\sigma}
\def\Z{\mathbb{Z}}
\def\S1{\bf S^1}
\newcommand{\cS}{\mathcal S}
\newcommand{\cF}{\mathcal F}
\newcommand{\cpone}{{\mathbb{CP}^1}}
\newcommand{\cptwo}{{\mathbb{CP}^2}}
\newcommand{\cpthree}{{\mathbb{CP}^3}}
\newcommand{\cptwom}{{\mathbb{CP}^{2m}}}
\newcommand{\cpq}{{\mathbb{CP}^{q}}}
\newcommand{\cpqplusone}{{\mathbb{CP}^{q+1}}}
\newcommand{\cH}{{\mathcal H}}
\begin{document}

\abovedisplayskip=6pt plus3pt minus3pt
\belowdisplayskip=6pt plus3pt minus3pt

\title[Codimension two souls and cancellation phenomena]
{\bf Codimension two souls and cancellation phenomena}

\thanks{\it 2000 Mathematics Subject classification.\rm\
Primary 53C20, Secondary 57R55.
\it\ Keywords:\rm\ nonnegative curvature, soul,
exotic sphere, moduli space, surgery, normal invariant,
equivariant function spaces.}\rm

\author{Igor Belegradek\and S{\l}awomir Kwasik \and Reinhard Schultz}

\address{Igor Belegradek\\School of Mathematics\\ Georgia Institute of
Technology\\ Atlanta, GA 30332-0160}\email{ib@math.gatech.edu}
\address{S{\l}awomir Kwasik\\Mathematics Department\\
Tulane University\\6823 St. Charles Ave\\New Orleans, LA 70118}
\email{kwasik@tulane.edu}
\address{Reinhard Schultz\\Department of Mathematics\\ University of California Riverside\\ 900 Big Springs Drive\\ Riverside, CA 92521}
\email{schultz@math.ucr.edu}


\date{}
\begin{abstract}
For each $m\ge 0$ we find an open
$(4m+9)$-dimensional simply connected manifold with
complete nonnegatively curved metrics whose
souls are nondiffeomorphic, homeomorphic, and
have codimension $2$.
We give a diffeomorphism classification
of the pairs $(N,\,\textup{soul})$  when
$N$ is the total space of a nontrivial complex line
bundle over $S^7\times\cptwo$; up to diffeomorphism
there are precisely three such pairs, distinguished by
their nondiffeomorphic souls.
\end{abstract}
\maketitle

\section{Introduction}

In dimension $7$ there are several examples of closed
Riemannian manifolds of nonnegative sectional curvature that are
homeomorphic and nondiffeomorphic. Historically,
the first such example is an exotic $7$-sphere
discovered by Gromoll and Meyer as the biquotient
$Sp(2)/\!/ Sp(1)$~\cite{GroMey}. Other examples include
some homotopy $7$-spheres with metrics of nonnegative sectional curvature constructed
by Grove and Ziller~\cite{GroZil}, and examples found
among Eschenburg spaces and Witten manifolds
by Kreck and Stolz~\cite{KS-wit, KS-mod} (see also~\cite{CEZ}).

Our main result gives the first examples of this kind in dimensions
greater than $7$; for example,
we show that $S^7\times\cptwom$ and $Sp(2)/\!/ Sp(1)\times\cptwom$ are 
not diffeomorphic. This is proved via
a delicate argument which mixes surgery theory
with homotopy-theoretic considerations
from~\cite{Sch73, BecSch, Sch87, Bro-surv}.

Recall that for every integer $d$ there is a unique
oriented homotopy $7$-sphere $\Sigma^{7}(d)$ that bounds a parallelizable
manifold with signature $8d$~\cite{KerMil}.
Here $\Sigma^{7}(0)=S^{7}$, and
$\Sigma^{7}(1)=Sp(2)/\!/ Sp(1)$ generates $bP_{8}\cong\mathbb Z_{28}$,
the group of oriented homotopy $7$-spheres, which all
bound parallelizable manifolds.

Grove and Ziller~\cite{GroZil} constructed metrics of nonnegative sectional curvature
on all exotic $7$-spheres that are the
total spaces of linear $S^3$-bundles over $S^4$.
A classification of such exotic spheres by Eells and Kuiper~\cite{EelKui}
then implies that $\Sigma^7(d)$ admits a metric of nonnegative sectional curvature if
$d\equiv\frac{1}{2}\,h(h-1)\ \text{mod}\ 28$ for some integer $h$.
Since $\Sigma^7(-d)$ and $\Sigma^7(d)$ are orientation-reversingly
diffeomorphic, we see that an oriented homotopy $7$-sphere $\Sigma^7(d)$
admits a metric
of nonnegative sectional curvature provided $d\notin\{2,5,9,12\}$.

One obvious approach to constructing additional manifolds with
metrics of nonnegative sectional curvature is to take products of
previously known examples.  In particular, products of a fixed manifold of
nonnegative sectional curvature and the exotic
spheres $\Sigma^7(d)$, with $d$ restricted as above, are candidates for
pairs of homeomorphic but not diffeomorphic manifolds supporting
such metrics.  However, this often does not yield new diffeomorphism types;
for example, it is well known that $\Sigma^7(d)\times S^k$ is diffeomorphic to
$S^7\times S^k$ for all $k\geq 2$ ({\it e.g.\/}, see \cite{Sc1}).  We shall
prove the following result, which
does yield new families of homeomorphic but nondiffeomorphic pairs with
metrics of nonnegative sectional curvature:

\begin{thm}\label{intro-thm: CPq cancells}
If $m$, $d$, $d^\prime$ are integers and
$d-d^\prime$ is odd, then
$\Sigma^7(d)\times \cptwom$ is not diffeomorphic to
$\Sigma^7(d^\prime)\times \cptwom$.
\end{thm}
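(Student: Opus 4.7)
The plan is to produce a $\mathbb{Z}/2$-valued diffeomorphism invariant of $\Sigma^7(d)\times\cptwom$ that records $d\bmod 2$. Let $X = S^7\times\cptwom$ (dimension $n=4m+9$), let $p\colon X\to S^7$ be the projection, and $s\colon S^7\hookrightarrow X$, $x\mapsto(x,\mathrm{pt})$, so that $p\circ s=\mathrm{id}$. Since $X$ is simply connected and $n\ge 5$, the smooth surgery exact sequence reads
$$L_{n+1}(1)\longrightarrow\mathcal{S}^{\mathrm{Diff}}(X)\xrightarrow{\eta}[X,G/O]\longrightarrow L_n(1),$$
and by exactness the $L_{n+1}(1)$-action on $\mathcal{S}^{\mathrm{Diff}}(X)$ lies in the kernel of $\eta$. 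Composing $\eta$ with $s^*\colon[X,G/O]\to[S^7,G/O]=\pi_7(G/O)\cong\mathbb{Z}/28$ yields a map $\Psi\colon\mathcal{S}^{\mathrm{Diff}}(X)\to\mathbb{Z}/28$ that is invariant under the $L_{n+1}$-action.

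I would first evaluate $\Psi$ on $[\Sigma^7(d)\times\cptwom,\, f_d\times\mathrm{id}]$, where $f_d\colon\Sigma^7(d)\to S^7$ is a degree-one homotopy equivalence. Since $\Sigma^7(d)$ is stably parallelizable, the stable tangent bundle difference on the product is pulled back from the $\Sigma^7(d)$-factor with no contribution from $\cptwom$; hence $\eta(f_d\times\mathrm{id})=p^*\eta(f_d)$, and $\Psi$ sends this class to $\eta(f_d)=d\in\mathbb{Z}/28$.

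Next I would pass from the smooth structure set to diffeomorphism classes: two pairs in $\mathcal{S}^{\mathrm{Diff}}(X)$ represent diffeomorphic manifolds iff they lie in the same orbit under the natural action of the self-equivalence group $\mathrm{hAut}(X)$. For any $\phi\in\mathrm{hAut}(X)$, the induced map on $H^7(X)\cong\mathbb{Z}$ is multiplication by $\pm 1$, so $p\circ\phi\circ s\colon S^7\to S^7$ has degree $\pm 1$. For $\alpha=p^*\beta\in[X,G/O]$ we get $s^*\phi^*(p^*\beta)=(p\circ\phi\circ s)^*\beta=\pm\beta$, so the induced action on $\pi_7(G/O)=\mathbb{Z}/28$ is multiplication by $\pm 1$. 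Since $2\in\mathbb{Z}/28$ is not a unit, this action preserves parity mod $2$. Hence $d\bmod 2$ is a diffeomorphism invariant of $\Sigma^7(d)\times\cptwom$, and the theorem follows.

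The main technical obstacle is verifying the compatibility claims cleanly: that the product normal invariant is exactly $p^*\eta(f_d)$ with no cross-terms picked up from the $\cptwom$-factor, and that every $\phi\in\mathrm{hAut}(X)$ acts as asserted on the $\pi_7(G/O)$-summand of the normal invariant. Both rely on the homotopical triviality of maps between the factors (notably $\pi_7(\cptwom)=0$ and the vanishing of stable maps $\cptwom\to S^7$) together with the even-only cellular structure of $\cptwom$, which prevents $\mathrm{hAut}(X)$ from mixing factors. This is precisely the type of homotopy-theoretic input provided by the machinery of \cite{Sch73,BecSch,Sch87} cited in the introduction.
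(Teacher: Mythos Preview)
Your approach has a fatal error at the very first step: the claim that $\pi_7(G/O)\cong\mathbb Z/28$ is false. In fact $\pi_7(F/O)=0$ (the paper uses this repeatedly; it follows from the fibration $F/O\to BO\to BF$ since $\pi_7(BO)=0$ and the $J$-homomorphism $\pi_8(BO)=\mathbb Z\to\pi_8(BF)=\mathbb Z_{240}$ is onto). Equivalently, every homotopy $7$-sphere lies in $bP_8$, so by exactness of the surgery sequence the normal invariant $\eta(f_d)\in\pi_7(F/O)$ vanishes for every $d$. Your invariant $\Psi$ is therefore identically zero on all the structures $[\Sigma^7(d)\times\cptwom,\,f_d\times{\bf id}]$ and cannot distinguish them. (Incidentally, $\dim X=4m+7$, not $4m+9$.)

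This is not a fixable gap: the whole difficulty of the theorem is that the structures in question all have \emph{trivial} normal invariant, so one must analyze the $L_{n+1}(1)$-action itself. The paper's route is to first use the product formula for surgery obstructions (Fact~\ref{fact: browder}) to identify $\Sigma^7(d)\times\cptwom$ with $M\#\Sigma^{4m+7}(d)$ where $M=S^7\times\cptwom$, reducing the question to whether $\Sigma^{4m+7}(d-d')\in I_h(M)$. Taylor's Theorem~\ref{thm: taylor} gives $[bP_{4m+8}:I_h(M)\cap bP_{4m+8}]\ge 2$, but to conclude one must rule out that a diffeomorphism $M\to M\#\Sigma^{4m+7}(d-d')$ lies in a different homotopy class than the standard homeomorphism. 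That is exactly the content of the Dichotomy Principle (Theorem~\ref{thm: main dichotomy}): every homotopy self-equivalence of $M$ with trivial normal invariant is homotopic to a diffeomorphism. Proving this occupies most of the paper and requires the analysis of $\pi_7(E_1(\cpq))$ via the spectral sequences of \cite{Sch73, BecSch}; your final paragraph correctly identifies this as the crux, but it cannot be bypassed by restricting normal invariants to $S^7$.
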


The proof of Theorem~\ref{intro-thm: CPq cancells} occupies
most of the paper and is sketched in Section~\ref{sec: sketch}.
As we see in Lemma~\ref{lem: inertia vs divisible by 3} below,
$\Sigma^7(d)\times \cptwom$ and
$\Sigma^7(d^\prime)\times \cptwom$ are diffeomorphic if
either $d-d^\prime$ or $d+d^\prime$ is
divisible by $4$, and $m$ is not divisible by $3$.
For $m=1$ we have the following optimal strengthening of
Theorem~\ref{intro-thm: CPq cancells}:

\begin{thm}\label{intro-thm: CP2 cancells}
If $d$, $d^\prime$ are integers, then
$\Sigma^7(d)\times \cptwo$ is diffeomorphic to
$\Sigma^7(d^\prime)\times \cptwo$ if and only if
either $d-d^\prime$ or $d+d^\prime$ is divisible by $4$.
\end{thm}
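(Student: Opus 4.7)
\emph{The ``if'' direction.} Taking $m=1$ in Lemma~\ref{lem: inertia vs divisible by 3}, the hypothesis $3\nmid m$ is automatic, so the lemma yields a diffeomorphism $\Sigma^7(d)\times\cptwo\cong\Sigma^7(d')\times\cptwo$ whenever $4\mid d-d'$ or $4\mid d+d'$. This direction is essentially free, given what has been established earlier.

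\emph{Reduction of the ``only if'' direction.} Suppose $\Sigma^7(d)\times\cptwo\cong\Sigma^7(d')\times\cptwo$. Theorem~\ref{intro-thm: CPq cancells} with $m=1$ rules out $d-d'$ odd, so $d-d'$ is even. If in addition neither $d-d'$ nor $d+d'$ is divisible by $4$, then $d$ and $d'$ are both even with one lying in $4\mathbb{Z}$ and the other in $4\mathbb{Z}+2$. Using the ``if'' direction to normalise within each residue class mod $4$, the claim reduces to exhibiting
\[
S^7\times\cptwo\ \not\cong\ \Sigma^7(2)\times\cptwo .
\]

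\emph{The mod $4$ distinction.} This inequivalence lies beyond the reach of Theorem~\ref{intro-thm: CPq cancells}, whose proof supplies only a $\mathbb{Z}/2$-obstruction in $d$; a genuinely mod $4$ sensitive invariant is needed. I would build a Kreck--Stolz style $\mathbb{Q}/\mathbb{Z}$-valued characteristic number: given a closed $11$-manifold $M$ in the homotopy type of $S^7\times\cptwo$, choose a $\mathrm{Spin}^c$ coboundary $W^{12}$ compatible with a chosen tangential refinement, and form a combination of Pontryagin numbers and the signature of $W$, calibrated in the spirit of the Eells--Kuiper $\mu$-invariant of $\Sigma^7(d)$, so that the dependence on the choice of $W$ cancels modulo an appropriate rational. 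Evaluating on the natural coboundary $D^8_d\times\cptwo$, where $D^8_d$ is a parallelizable $8$-manifold of signature $8d$ bounded by $\Sigma^7(d)$, the invariant decomposes into a contribution proportional to $d$ together with characteristic numbers of $\cptwo$, and one reads off that it takes different values at $d=0$ and $d=2$.

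\emph{Main obstacle.} The hard step is the correct normalisation: one must pin down the exact rational modulus in which the various coboundary choices cancel, and verify that the resulting invariant is sharper by a factor of two than the mod $2$ obstruction driving Theorem~\ref{intro-thm: CPq cancells}. This requires combining the homotopy-theoretic machinery of~\cite{Sch73, BecSch, Sch87} with the specific features of the $\mathrm{Spin}^c$ structure on $\cptwo$, and is the precise point at which the argument becomes special to $m=1$ and does not extend to larger~$m$.
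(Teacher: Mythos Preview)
Your ``if'' direction and your reduction to the single case $S^7\times\cptwo\not\cong\Sigma^7(2)\times\cptwo$ are both correct and match the paper's setup. The gap is that you have not actually proved this case: the Kreck--Stolz/Eells--Kuiper style invariant is only a plan, and you yourself flag the normalisation of the modulus as the unsolved ``main obstacle.'' Without that computation there is no proof, and it is not clear a priori that such an invariant, built from $\mathrm{Spin}^c$ coboundaries of an $11$-manifold, will land in a group of order divisible by~$4$ rather than collapsing to the mod~$2$ information you already have.

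The paper's argument is entirely different and stays within surgery theory. Via Corollary~\ref{cor: diffeo criterionfor sigma7(d)} and Remark~\ref{rmk: non-orient diffeo} (both resting on the Dichotomy Principle, Theorem~\ref{thm: main dichotomy}), the question is equivalent to showing that $I_h(S^7\times\cptwo)\cap bP_{12}$ has index exactly~$4$ in $bP_{12}$. That the index is at most~$4$ is Lemma~\ref{lem: inertia vs divisible by 3}. That it is at least~$4$ is obtained by \emph{strengthening the proof of Taylor's theorem} in~\cite{Sch87}: the paper proves that for any stably fiber homotopically trivial vector bundle $\xi$ over $\Sigma(S^7\times\cptwo)$, each Pontryagin class $p_m(\xi)$ is divisible by $2j_{4m}$ (twice the order of the image of~$J$), not merely by $j_{4m}$. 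This extra factor of~$2$ is extracted from the splitting $\Sigma(S^7\times\cptwo)\simeq S^8\vee \Sigma\cptwo\vee S^8\cptwo$, Bott's divisibility theorem, and a diagram chase showing that $\pi_{12}(BO)\to[S^8\cptwo,BO]$ is multiplication by~$\pm 2$. Feeding this into the mechanism of~\cite[Theorem 2.1]{Sch87} yields index~$\ge 4$ directly. No secondary $\mathbb{Q}/\mathbb{Z}$-valued invariant is constructed; the mod~$4$ obstruction comes from bundle-theoretic divisibility over the suspension.
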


Therefore, if $m$ is not divisible by $3$, then
each $\Sigma^7(d)\times \cptwom$ admits a metric
of nonnegative sectional curvature, and the manifolds $\Sigma^7(d)\times \cptwom$
lie in $2$ or $3$ unoriented diffeomorphism classes;
for $m=1$ they lie in $3$ unoriented diffeomorphism classes.

We shall also show that every manifold that is tangentially homotopy
equivalent to $\Sigma^7(d)\times \cptwo$ is diffeomorphic to
$\Sigma^7(d^\prime)\times \cptwo$ for some $d^\prime$
(see Section~\ref{sec: tang hom eq}).

The problem of determining whether products of $\mathbb {CP}^n$ with homotopy spheres
are diffeomorphic goes back to Sullivan's results on product formulas for surgery
obstructions, and in \cite{Bro-surv} Browder showed the
relevance of such results to constructing smooth semifree circle actions
on homotopy $(2k+7)$-spheres. In particular, by~\cite[Section 6]{Bro-surv}
the specialization of
Theorem~\ref{intro-thm: CPq cancells}
to $d=1$ immediately yields the following result:

\begin{cor}\label{intro-cor: circle action}
Given an odd integer $d$ and positive integer $k$,
the exotic sphere $\Sigma^7(d)$
is diffeomorphic to the fixed point set of a smooth semifree
circle action on a homotopy $(2k+7)$-sphere if and only
if $k$ is even.
\end{cor}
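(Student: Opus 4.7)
The plan is to derive the corollary from Theorem~\ref{intro-thm: CPq cancells} using the correspondence, developed in~\cite{Bro-surv}, between smooth semifree $S^{1}$-actions on homotopy spheres and diffeomorphism questions for products with complex projective spaces. The relevant translation says: a homotopy $7$-sphere $\Sigma^{7}(d)$ occurs as the fixed point set of such an action on a homotopy $(2k+7)$-sphere if and only if $\Sigma^{7}(d)\times\cpkminusone$ is diffeomorphic to $S^{7}\times\cpkminusone$. Geometrically, the factor $\cpkminusone$ appears as the $S^{1}$-quotient of the unit sphere in the complex normal bundle of the fixed set in the linear model $S^{2k+7}\subset\mathbb{C}^{k}\oplus\mathbb{R}^{8}$.

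Granting this correspondence, the corollary follows from a parity analysis. If $k$ is odd, then $k-1=2m$ is even, so $\cpkminusone=\cptwom$. Applying Theorem~\ref{intro-thm: CPq cancells} to the pair $(d,0)$---valid since $d$ is odd, so that $d-0$ is odd---yields that $\Sigma^{7}(d)\times\cptwom$ is not diffeomorphic to $S^{7}\times\cptwom$, and Browder's criterion then excludes the existence of the desired action. If instead $k$ is even, then $\cpkminusone$ has odd complex dimension, and a well-known cancellation result (also contained in~\cite{Bro-surv}) asserts that products of homotopy $7$-spheres bounding parallelizable manifolds with such an odd-complex-dimensional $\mathbb{CP}$ are standard; the required diffeomorphism holds, and the action exists by Browder's construction.

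The only obstacle is to align the precise form of Browder's criterion with the flat statement of Theorem~\ref{intro-thm: CPq cancells}. Since Theorem~\ref{intro-thm: CPq cancells} rules out any diffeomorphism of the underlying smooth manifolds, it \emph{a fortiori} rules out any structured diffeomorphism required by the setup of~\cite{Bro-surv}, so the obstruction direction (odd $k$) is immediate. The existence direction (even $k$) does not use Theorem~\ref{intro-thm: CPq cancells} at all and is furnished directly by Browder's construction in odd $\mathbb{CP}$-dimension, completing the argument.
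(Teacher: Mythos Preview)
Your proposal is correct and follows essentially the same approach as the paper: invoke Browder's criterion from~\cite{Bro-surv} linking realizability as a fixed point set to the diffeomorphism question for $\Sigma^{7}(d)\times\cpkminusone$ versus $S^{7}\times\cpkminusone$, then apply Theorem~\ref{intro-thm: CPq cancells} with $d'=0$ when $k$ is odd and Browder's own cancellation result \cite[Theorem 6.1]{Bro-surv} when $k$ is even. Your care in noting that Theorem~\ref{intro-thm: CPq cancells} excludes \emph{any} diffeomorphism, hence a fortiori any structured one Browder's setup might demand, is exactly the right way to close the obstruction direction.
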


The result is new for $k=3$, and the cases $k\geq 5$ are stated
in a paper by the third author (compare~\cite[Theorem III]{Sch87})
with a derivation based upon some results whose proofs have not yet
been published. The corresponding result when $k=1$ follows from a much
older result of W.-Y. Hsiang~\cite[Theorem II]{Hsi}.
On the other hand, if $k$ is even, then~\cite[Theorem 6.1]{Bro-surv} implies
that every product $\Sigma^{7}(d)\times\mathbb {CP}^{k-1}$ is
diffeomorphic to $S^{7}\times\mathbb {CP}^{k-1}$, and that
every homotopy $7$-sphere can be realized as a fixed point
set of a smooth semifree $S^1$-action on a homotopy $(2k+7)$-sphere.

We shall apply the preceding results to study manifolds of nonnegative
sectional curvature via the {\sl soul theorem} of Cheeger and Gromoll.
Their results imply that every open complete manifold $N$ of nonnegative sectional curvature
is diffeomorphic to the total space of a normal bundle to
a compact totally geodesic submanifold, called a {\sl soul}~\cite{CheGro}.
The diffeomorphism class of a soul may depend on the
metric, and this dependence has been investigated
in~\cite{Bel, KPT} and more recently in~\cite{BKS1},
where the reader can find further motivation and background.
In particular, in~\cite{BKS1} we systematically searched
for open manifolds admitting metrics with nondiffeomorphic
souls of lowest possible codimension. To this end we show the following:

\begin{thm}\label{intro-thm: nondiffe souls}
For each $m\ge 0$ there exists an open
$(4m+9)$-dimensional simply connected manifold $N$ admitting
complete metrics of nonnegative sectional curvature whose
souls are nondiffeomorphic, homeomorphic, and
have codimension $2$.
\end{thm}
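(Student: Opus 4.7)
The plan is to realize $N$ as the total space of a complex line bundle over $\Sigma^7(d)\times\cptwom$, so that its zero section (of codimension two) serves as the soul, and then use cancellation to show this total space is independent of the choice of $d$ while the souls vary. Fix integers $d,d'$ with $d-d'$ odd for which $\Sigma^7(d)$ and $\Sigma^7(d')$ both admit metrics of $\sec\ge 0$ (say $d=0$ and $d'=1$, the latter being the Gromoll-Meyer sphere). Set $S_d:=\Sigma^7(d)\times\cptwom$. By Theorem~\ref{intro-thm: CPq cancells}, $S_d$ and $S_{d'}$ are non-diffeomorphic, while they are homeomorphic because each $\Sigma^7(d)$ is topologically $S^7$. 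Let $L_d:=\pi_d^*\mathcal O(1)$ be the pullback of the hyperplane bundle along $\pi_d\co S_d\to\cptwom$.

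The total space of $\mathcal O(1)\to\cptwom$ is diffeomorphic to $\mathbb{CP}^{2m+1}\setminus\{pt\}$, with the zero section $\cptwom$ sitting as a complex hyperplane. This open manifold carries a complete metric of $\sec\ge 0$ with soul $\cptwom$, which can be built by deforming the Fubini-Study metric to become cylindrical near the puncture (while retaining nonnegative curvature), or by a standard cohomogeneity-one construction. Taking the product with a $\sec\ge 0$ metric on $\Sigma^7(d)$ endows the total space $N_d\cong\Sigma^7(d)\times(\mathbb{CP}^{2m+1}\setminus\{pt\})$ of $L_d$ with a complete $\sec\ge 0$ metric whose soul is $S_d$ of codimension two.

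The heart of the argument is to verify $N_d\cong N_{d'}$ as smooth manifolds, so that the metrics live on a common $N$. Write $N_d=(\Sigma^7(d)\times\mathbb{CP}^{2m+1})\setminus(\Sigma^7(d)\times\{pt\})$. Since $k=2m+2$ is even, Browder~\cite[Theorem 6.1]{Bro-surv} gives a diffeomorphism $\Sigma^7(d)\times\mathbb{CP}^{2m+1}\cong S^7\times\mathbb{CP}^{2m+1}$; the image of $\Sigma^7(d)\times\{pt\}$ is an embedded homotopy $7$-sphere homotopic to the standard slice $S^7\times\{pt\}$, and in the metastable range for $m\ge 1$ such an embedding is ambient-isotopic to the standard one. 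Hence $N_d\cong S^7\times(\mathbb{CP}^{2m+1}\setminus\{pt\})$ is independent of $d$, and the resulting common $N$ is open, simply-connected, and $(4m+9)$-dimensional. The case $m=0$ (where $H^2(\Sigma^7(d))=0$ forces $L_d$ to be trivial) reduces to the separate cancellation $\Sigma^7(d)\times\R^2\cong\Sigma^7(d')\times\R^2$.

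The main obstacle is this cancellation: Browder's theorem supplies a diffeomorphism of the compactifications, but one must still standardize the embedded homotopy $7$-sphere being removed --- via metastable-range embedding theory --- to conclude that the complements are diffeomorphic; the degenerate case $m=0$, where the line bundle construction itself breaks down, requires a distinct argument.
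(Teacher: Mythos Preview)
Your proposal has two genuine gaps.

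\textbf{The case $m=0$ fails outright.} The claimed cancellation $\Sigma^7(d)\times\R^2\cong\Sigma^7(d')\times\R^2$ is false: as the paper itself notes in the introduction, a closed simply-connected manifold of dimension $\ge 5$ can be recovered up to diffeomorphism from its product with $\R^2$. Since every line bundle over $\Sigma^7(d)$ is trivial, your construction cannot produce non-diffeomorphic souls in a common $N$ when $m=0$; this is exactly why nontrivial line bundles are essential. The paper handles $m=0$ by abandoning homotopy $7$-spheres altogether and using instead homeomorphic, non-diffeomorphic Eschenburg spaces or Witten manifolds, which have $H^2\cong\Z$ and hence support nontrivial line bundles.

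\textbf{For $m\ge 1$, the isotopy step is invalid.} Under Browder's diffeomorphism $\Phi$, the submanifold $\Phi(\Sigma^7(d)\times\{pt\})$ is diffeomorphic to $\Sigma^7(d)$, not to $S^7$. An ambient isotopy carries a submanifold to a diffeomorphic one, so $\Phi(\Sigma^7(d)\times\{pt\})$ can never be ambient isotopic to the standard slice $S^7\times\{pt\}$ when $\Sigma^7(d)\not\cong S^7$. Haefliger-type metastable-range results classify embeddings of a \emph{fixed} source manifold up to isotopy; they do not compare embeddings of distinct (even homeomorphic) manifolds. Thus the passage from ``diffeomorphic compactifications'' to ``diffeomorphic complements'' is unjustified, and you have not shown $N_d\cong N_{d'}$.

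The conclusion $N_d\cong N_{d'}$ is nonetheless correct for $m\ge 1$, and the paper proves it by a direct surgery argument (Theorem~\ref{thm: diffeo total spaces}): the standard homeomorphism $S_{d'}\to S_d$ has trivial normal invariant, hence so does the induced map $\hat f$ of disk bundles; since $\dim S_d=4m+7\equiv 3\bmod 4$, the relevant relative Wall group $L^s_{4m+10}(\pi_1(\partial N)\to 1)$ vanishes, so $\hat f$ is homotopic to a diffeomorphism.
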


If $\dim(N)\ge 6$ then codimension $2$ is the smallest possibility, for
the $h$-cobordism theorem implies that all codimension $1$ souls
in a simply connected open manifold of dimension at least $6$
are diffeomorphic.
Taking $N$ in Theorem~\ref{intro-thm: nondiffe souls} to
be the product of $\mathbb R^2$ with manifolds in
Theorem~\ref{intro-thm: CPq cancells} cannot work
because a closed simply connected manifold of dimension at least 5
can be recovered up to diffeomorphism from its product
with $\mathbb R^2$. Instead, we find nontrivial $\mathbb R^2$-bundles
over manifolds in Theorem~\ref{intro-thm: CPq cancells}
that admit metrics of nonnegative sectional curvature and have diffeomorphic total
spaces. The same reasoning
works for $\mathbb R^2$-bundles over Eschenburg
spaces or Witten manifolds
that are homeomorphic and nondiffeomorphic, which covers
the case $m=0$ in Theorem~\ref{intro-thm: nondiffe souls}.

Let $\mathfrak M^{k,c}_{\sec\ge 0} (N)$
denote the moduli space of complete metrics of nonnegative sectional curvature on $N$
with topology of $C^k$-convergence on compact
subsets, where $0\le k\le \infty$.  If $N$
admits a complete metric with nonnegative sectional curvature
whose soul has non-trivial normal Euler class,  then
the results of~\cite{KPT} show that metrics with nondiffeomorphic
souls lie in different components of
$\mathfrak M^{k,c}_{\sec\ge 0} (N)$.  More generally,
the authors showed in~\cite{BKS1} that associating to
the nonnegatively curved metric $g$ the diffeomorphism type
of the pair $(N,\,\textup{soul of}\ g)$ defines a locally constant
function on $\mathfrak M^{k,c}_{\sec\ge 0} (N)$;
thus nondiffeomorphic pairs correspond to
metrics in different components of the moduli space.
Since the souls in Theorem~\ref{intro-thm: nondiffe souls}
have nontrivial normal Euler class, we obtain the following:

\begin{cor}\label{intro-cor: mod}
$\mathfrak M^{k,c}_{\sec\ge 0} (N)$ is not connected
for $N$ as in Theorem~\textup{\ref{intro-thm: nondiffe souls}}.
\end{cor}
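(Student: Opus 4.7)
The plan is to derive this as a direct consequence of Theorem~\ref{intro-thm: nondiffe souls} together with the locally-constant-function result for the soul map that is summarized in the paragraph immediately preceding the corollary. I would choose $N$ as in Theorem~\ref{intro-thm: nondiffe souls} and pick two complete metrics $g_0,g_1$ on $N$ with $\sec\ge 0$ whose souls $S_0,S_1$ are non-diffeomorphic (but homeomorphic, of codimension $2$). Because $N$ is produced as a nontrivial $\mathbb R^{2}$-bundle over each $S_i$, the normal bundle of the soul in either metric is a nontrivial plane bundle, so its Euler class is nonzero; this verifies the standing hypothesis used in~\cite{KPT} and~\cite{BKS1}.

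Next I would invoke the result from~\cite{BKS1} (which extends~\cite{KPT} to arbitrary $0\le k\le\infty$) stating that the assignment
\[
g\ \longmapsto\ \bigl[(N,\,\text{soul of }g)\bigr]
\]
is a locally constant function on $\mathfrak M^{k,c}_{\sec\ge 0}(N)$, where $[\,\cdot\,]$ denotes the diffeomorphism class of the pair. Since $(N,S_0)$ and $(N,S_1)$ are not diffeomorphic as pairs (their second components $S_0$ and $S_1$ are not diffeomorphic), the images of $g_0$ and $g_1$ under this locally constant function differ, so $g_0$ and $g_1$ necessarily lie in distinct connected components of $\mathfrak M^{k,c}_{\sec\ge 0}(N)$. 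This forces $\mathfrak M^{k,c}_{\sec\ge 0}(N)$ to be disconnected, which is the desired conclusion.

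There is essentially no hard step here once Theorem~\ref{intro-thm: nondiffe souls} is in hand: the entire content is that the souls constructed in Theorem~\ref{intro-thm: nondiffe souls} carry nontrivial normal Euler class (guaranteed by the $\mathbb R^{2}$-bundle construction) so that the locally-constant-function machinery of~\cite{KPT, BKS1} applies. The only point worth flagging is to make sure the two metrics one selects really do have non-diffeomorphic souls inside the \emph{same} total space $N$; this is exactly the output of Theorem~\ref{intro-thm: nondiffe souls}, so nothing further needs to be proved.
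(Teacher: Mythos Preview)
Your proposal is correct and follows essentially the same approach as the paper: the corollary is stated as an immediate consequence of Theorem~\ref{intro-thm: nondiffe souls} together with the locally constant soul-map result from~\cite{BKS1} (under the nontrivial normal Euler class hypothesis), and the paper's justification is precisely the sentence preceding the corollary. Your added observation that $(N,S_0)$ and $(N,S_1)$ are non-diffeomorphic as pairs because $S_0$ and $S_1$ are non-diffeomorphic is the right way to unpack that sentence.
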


Given an open manifold $N$ admitting a complete metric of nonnegative sectional curvature
with soul $S_0$,
an attractive goal is to obtain a diffeomorphism classification
of pairs $(N, S)$ where $S$ is a soul of some complete metric
of nonnegative sectional curvature on $N$. Here we focus on the case when
$S_0$ is a simply connected, has codimension $2$,
and dimension at least 5. If $S_0$ has trivial normal bundle,
and $S$ is any other soul in $N$, then the pairs
$(N, S)$ and $(N, S_0)$ are diffeomorphic~\cite[Lemma 5.8]{BKS1}.
To our knowledge the results below
are the first instances of diffeomorphism classification of the
pairs $(N,S)$, where $S$ is a soul of some complete metric
of nonnegative sectional curvature on $N$, in which not all such pairs are diffeomorphic.

\begin{thm}\label{intro-thm: classification for s7xcp2}
The total space $N$ of every nontrivial complex line bundle
over $\cptwo\times S^7$ admits $3$ complete nonnegatively
curved metrics with pairwise nondiffeomorphic souls
$S_0$, $S_1$, $S_2$ such that for every complete nonnegatively
curved metric on $N$ with soul $S$, there exists a self-diffeomorphism
of $N$ taking $S$ to some $S_i$.
\end{thm}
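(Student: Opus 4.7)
The plan is to identify three candidate souls, construct three complete nonnegatively curved metrics on $N$ that realize them, and then show that every soul of $N$ falls into one of these three classes up to an ambient self-diffeomorphism of $N$. By Theorem~\ref{intro-thm: CP2 cancells}, the manifolds $\Sigma^7(d)\times\cptwo$ represent exactly three unoriented diffeomorphism classes, indexed by $d\bmod 4$ modulo the involution $d\mapsto -d$. I pick representatives $d_0=0$, $d_1=1$, and some $d_2\equiv 2\pmod 4$ for which $\Sigma^7(d_2)$ carries a metric of $\sec\ge 0$ (for instance $d_2=6$, produced by the Grove--Ziller family), and set $S_i=\Sigma^7(d_i)\times\cptwo$. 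These are pairwise non-diffeomorphic closed simply-connected $9$-manifolds, each with a product metric of $\sec\ge 0$.

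Next, let $L$ be a positive generator of the group of complex line bundles over $\cptwo$ and let $\xi_i\to S_i$ be its pullback along the projection $S_i\to\cptwo$. By K\"unneth $H^2(S_i)\cong H^2(\cptwo)\cong\Z$, so this exhausts the nontrivial complex line bundles on $S_i$ up to pullback along self-diffeomorphisms. Combining the $U(1)$-structure of $\xi_i$ with the product metric of $\sec\ge 0$ on $S_i$ in the standard Cheeger--Gromoll fashion yields a complete metric of $\sec\ge 0$ on $E(\xi_i)$ with $S_i$ as soul. The heart of the argument is to show that the open $11$-manifolds $E(\xi_0)$, $E(\xi_1)$, $E(\xi_2)$ are pairwise diffeomorphic and diffeomorphic to the given $N$. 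Since each $\xi_i$ is pulled back from $\cptwo$, we have $E(\xi_i)\cong\Sigma^7(d_i)\times E(L)$, so the task reduces to the cancellation statement that $\Sigma^7(d)\times E(L)\cong\Sigma^7(d')\times E(L)$ whenever $d\equiv\pm d'\pmod 4$, even though $\Sigma^7(d)\times\cptwo$ and $\Sigma^7(d')\times\cptwo$ are themselves not diffeomorphic. I would attack this in two stages. First, the end of each $E(\xi_i)$ is $\Sigma^7(d_i)\times S^5$ (viewing the unit circle bundle of $L$ over $\cptwo$ as the Hopf $5$-sphere); inertia-type results for homotopy $7$-spheres under a factor of $S^5$ should give a diffeomorphism of ends. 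Second, writing each $\Sigma^7(d)\times E(L)$ as $\Sigma^7(d)\times D(L)$ glued to the collar $\Sigma^7(d)\times S^5\times[0,\infty)$, I would extend the end-diffeomorphism over the interior via a controlled $h$-cobordism argument. This cancellation step is the principal technical obstacle.

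For the uniqueness half, suppose $S$ is an arbitrary soul of some complete metric of $\sec\ge 0$ on $N$. Since the inclusion $S\hookrightarrow N$ is a homotopy equivalence and $N$ is simply-connected, $S$ is closed, simply-connected, and homotopy equivalent to $S^7\times\cptwo$. The normal bundle of the soul constructed in Step~2 has nontrivial Euler class, and this property is preserved under the homotopy equivalence between souls, forcing $\nu_S$ to have rank $2$ and thus $\dim S=9$. Comparing the splittings $TN|_S\cong TS\oplus\nu_S$ and $TN|_{S^7\times\cptwo}\cong T(S^7\times\cptwo)\oplus\nu_{S^7\times\cptwo}$, and observing that rank-$2$ bundles over these homotopy equivalent spaces are classified by a matching Euler class, shows that $S$ is tangentially homotopy equivalent to $S^7\times\cptwo$. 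The tangential-homotopy-equivalence classification announced in Section~\ref{sec: tang hom eq} then yields $S\cong\Sigma^7(d)\times\cptwo$ for some $d$, and Theorem~\ref{intro-thm: CP2 cancells} places $S$ in one of the three classes $S_0,S_1,S_2$.

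Finally, to promote an abstract diffeomorphism $\phi\colon S\to S_i$ to an ambient self-diffeomorphism of $N$, note that $\phi^*\nu_{S_i}$ and $\nu_S$ are rank-$2$ complex line bundles on $S$, classified by their first Chern classes in $H^2(S)\cong\Z$. After possibly composing $\phi$ with a self-diffeomorphism of $S_i$ covering complex conjugation on the $\cptwo$-factor, I arrange $\phi^*\nu_{S_i}\cong\nu_S$. Then $\phi$ lifts to a bundle isomorphism $E(\nu_S)\to E(\nu_{S_i})$, and composing with the tubular-neighborhood identifications $N\cong E(\nu_S)$ and $N\cong E(\nu_{S_i})$ produces the required self-diffeomorphism of $N$ sending $S$ to $S_i$.
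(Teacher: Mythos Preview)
The existence step---showing that the three total spaces $E(\xi_i)$ are diffeomorphic---has a genuine gap. You propose to match the ends $\Sigma^7(d_i)\times S^5$ by an inertia argument and then extend over the interior via an $h$-cobordism, but you yourself flag this as the ``principal technical obstacle'' and do not carry it out. The paper takes a completely different and much shorter route: Theorem~\ref{thm: diffeo total spaces} shows directly that whenever $S'=S\#\Sigma$ for a homotopy sphere $\Sigma$, the disk bundle of any nontrivial complex line bundle $\omega$ over $S$ is diffeomorphic to that of its pullback along the standard homeomorphism $f\colon S'\to S$. The proof is pure surgery: $f$ has trivial normal invariant, the induced bundle map $\hat f\colon D(f^*\omega)\to D(\omega)$ inherits trivial normal invariant, and the relevant relative Wall group $L^s_{n+3}(\mathbb Z_d\to 1)$ (with $\mathbb Z_d=\pi_1(\partial D(\omega))$) vanishes for $n\equiv 3\pmod 4$, so $\hat f$ is homotopic to a diffeomorphism. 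Note also that your restriction to the generator $L$ does not, contrary to your claim, cover all nontrivial line bundles up to self-diffeomorphism---self-diffeomorphisms act on $H^2\cong\mathbb Z$ only by $\pm 1$---and for $L^k$ with $|k|>1$ the circle bundle is a lens space rather than $S^5$, so your end argument would need further modification.

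For the uniqueness half, your idea of invoking the tangential classification (Theorem~\ref{thm: tang hom eq S7xCP2}) is a legitimate alternative to the paper's route via \cite[Corollary~5.2, Proposition~5.4]{BKS1}, which shows directly that the canonical homotopy equivalence $f_d\colon S\#\Sigma(d)\to S'$ has trivial normal invariant. However, your argument that $S$ is \emph{tangentially} homotopy equivalent to $S^7\times\cptwo$ still requires knowing that $\nu_S\cong f^*\nu_{S_0}$, not merely that both have nontrivial Euler class; this is exactly \cite[Corollary~5.2]{BKS1}, which you need to cite. (Minor slip: the souls here have dimension $11$, not $9$.)
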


Here $S_i$ is isometric to the product $\cptwo\times\Sigma^7(3i)$
where the second factor is given a metric of nonnegative sectional curvature
by~\cite{GroZil}.

One can prove similar result for $7$-dimensional
souls which are certain Witten manifolds.
Recall that the {\sl Witten manifold} $M_{k,l}$
is the total space of an oriented circle bundle
over $\cptwo\times\cpone$ with Euler class given by
$(l,k)\in H^2(\cptwo )\oplus H^2(\cpone )$ where $l$,
$k$ are nonzero coprime integers.
In~\cite[Theorem B]{KS-wit} Kreck and Stolz  classified the
Witten manifolds $M_{k,l}$ up to oriented
homeomorphism and diffeomorphism in terms of $k,l$,
and the above definition of $M_{k,l}$
easily implies that $M_{-k,-l}$ is orientation-reversingly
diffeomorphic to $M_{k,l}$, so one
also has a (unoriented) diffeomorphism classification
of Witten manifolds.
As remarked after~\cite[Corollary C]{KS-wit},
if $l\equiv 0\ \mathrm{mod}\, 4$
and $l\equiv 0, 3, 4\ \mathrm{mod}\, 7$,
then every smooth manifold that is homeomorphic to
$M_{k,l}$ must be a Witten manifold.
For these examples we shall prove he following result:

\begin{thm}
\label{intro-thm: classification for witten}
For nonzero coprime integers $k,l$ with
$l\equiv 0, 3, 4\, \mathrm{mod}\, 7$ and
$l\equiv 0\, \mathrm{mod}\, 4$,
let $N$ be the total space of a nontrivial rank $2$ vector bundle
over $M_{k,l}$. \newline
\textup{(1)}
If the Witten manifold $M_{k^\prime,\, l^\prime}$ is
homeomorphic to $M_{k,l}$, then $N$ has a complete metric
of nonnegative sectional curvature whose soul $S_{k^\prime,l^\prime}$ is
diffeomorphic to $M_{k^\prime,l^\prime}$. \newline
\textup{(2)}
For every complete metric
of nonnegative sectional curvature on $N$ with soul $S$ the pair $(N,S)$
is diffeomorphic to $(N, S_{k^\prime,\, l^\prime})$
for some $S_{k^\prime, l^\prime}$ as in \textup{(1)}.
\end{thm}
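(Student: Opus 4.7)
The strategy parallels that of Theorem~\ref{intro-thm: classification for s7xcp2}, with Witten manifolds in place of $\cptwo\times S^7$. The ingredients are: the Kreck--Stolz homeomorphism/diffeomorphism classification of Witten manifolds~\cite{KS-wit, KS-mod}, including \cite[Theorem C]{KS-wit} and the remark following it, which under the hypotheses on $l$ guarantees that the full homeomorphism class of $M_{k,l}$ consists of Witten manifolds; the fact that each $M_{k,l}$ carries a metric of $\sec\ge 0$ as a quotient of a compact Lie group by a free isometric circle action; the Cheeger--Gromoll soul theorem; and the result from~\cite{BKS1} that the diffeomorphism type of the pair $(N,\,\textup{soul})$ is a locally constant function on the moduli space of nonnegatively curved metrics on $N$.

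For part (1), suppose $M_{k',l'}$ is homeomorphic to $M_{k,l}$. Since $M_{k',l'}$ admits a metric of $\sec\ge 0$, one may endow any $\mathbb R^2$-bundle $N'$ over $M_{k',l'}$ with a complete metric of $\sec\ge 0$ by a standard pullback/warped-product construction, whose soul is the zero section, diffeomorphic to $M_{k',l'}$. It therefore suffices to verify that for the nontrivial $\mathbb R^2$-bundle over $M_{k',l'}$ whose Euler class corresponds (under the homeomorphism $M_{k',l'}\to M_{k,l}$) to that of $N\to M_{k,l}$, the total space $N'$ is diffeomorphic to $N$. This is an $\mathbb R^2$-bundle cancellation statement, analogous in spirit to the cancellation arguments behind Theorem~\ref{intro-thm: CPq cancells}.

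For part (2), let $S$ be a soul of some complete metric of $\sec\ge 0$ on $N$. By the soul theorem, $N$ is diffeomorphic to the total space of the normal bundle of $S$, an $\mathbb R^2$-bundle whose Euler class must agree (up to sign) with that of the given bundle $N\to M_{k,l}$, so is nontrivial. Hence $S$ is a closed simply-connected $7$-manifold with the homotopy type of $M_{k,l}$. Surgery-theoretic comparison shows that $S$ is homeomorphic to $M_{k,l}$, and the hypotheses $l\equiv 0\,\mathrm{mod}\,4$ and $l\equiv 0,3,4\,\mathrm{mod}\,7$ then force $S$ to be some Witten manifold $M_{k',l'}$ via \cite[Theorem C]{KS-wit}. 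Applying part (1) produces a nonnegatively curved metric on $N$ with soul $S_{k',l'}$ diffeomorphic to $M_{k',l'}\cong S$, after which the locally-constant pair invariant from~\cite{BKS1}, combined with the diffeomorphism extension techniques used in Theorem~\ref{intro-thm: classification for s7xcp2}, yields a self-diffeomorphism of $N$ sending $S$ to $S_{k',l'}$.

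The principal obstacle is the cancellation step in part (1): proving that the diffeomorphism type of the total space of a nontrivial $\mathbb R^2$-bundle over a Witten manifold depends only on the homeomorphism type of the base and the transported Euler class. This requires adapting the mixture of surgery and homotopy-theoretic cancellation that underlies Theorem~\ref{intro-thm: CPq cancells} to a base with nontrivial second cohomology and more intricate tangential structure than a product $\Sigma^7(d)\times\cptwom$; the extra bookkeeping dominates the work, but the method itself is the same.
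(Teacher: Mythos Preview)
Your outline has the right shape but misidentifies the engine that drives both parts.

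For part (1), the cancellation you need is \emph{not} an adaptation of the heavy machinery behind Theorem~\ref{intro-thm: CPq cancells}. That theorem concerns cancelling a $\cptwom$-factor from a closed product, and its proof (Dichotomy Principle, analysis of $\pi_7(E_1(\cpq))$, etc.) is entirely beside the point here. What the paper actually does is much simpler and is stated as Theorem~\ref{thm: diffeo total spaces}: first, homeomorphic Witten manifolds differ by connected sum with a homotopy $7$-sphere (smoothing theory, using $H^3(M_{k,l};\mathbb Z_2)=0$ and $6$-connectedness of $PL/O$); second, the standard homeomorphism $S'\to S$ has trivial normal invariant, and pulling back to the disk bundle $D(\omega)$ gives a homotopy equivalence of manifolds with boundary whose normal invariant is still trivial. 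The point is that once you have a boundary, the surgery obstruction lives in $L^s_{n+3}(\pi_1(\partial N)\to 1)=L^s_{10}(\mathbb Z_d\to 1)$, which vanishes because $n=7\equiv 3\bmod 4$. So $\hat f$ is homotopic to a diffeomorphism outright. No homotopy self-equivalence analysis, no Taylor's theorem, no inertia groups.

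For part (2), invoking ``the locally-constant pair invariant'' from~\cite{BKS1} is the wrong tool: that result only tells you that metrics in the same path-component of the moduli space have diffeomorphic pairs $(N,\textup{soul})$, and you have no path between the given metric and the one built in part (1). What the paper uses from~\cite{BKS1} is different and sharper: the canonical homotopy equivalence $f\colon S_0\to S$ (zero-section inclusion followed by normal-bundle projection onto the new soul) has trivial normal invariant, so by the $bP_8$-action $S$ is diffeomorphic to $M_{k,l}\#\Sigma^7(d_0)$ for some $d_0$; the Kreck--Stolz hypothesis then identifies this with some $M_{k',l'}$. A second input from~\cite{BKS1} is that $f$ preserves normal Euler classes; since $H^2\cong\mathbb Z$, any diffeomorphism $S_0\to S$ also preserves them up to sign, so the normal bundles match and the pair diffeomorphism follows directly.
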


Other examples of homeomorphic, nondiffeomorphic,
closed $7$-manifolds of nonnegative sectional curvature occur among the so-called
Eschenburg spaces, which is a family
of quotients of $SU(3)$ by free circle actions~\cite{CEZ}.

The proof of Theorems~\ref{intro-thm: classification for s7xcp2}
and~\ref{intro-thm: classification for witten} hinges on the
following three observations.

$\bullet$ If $S$, $S^\prime$ are homeomorphic, nondiffeomorphic
manifolds that are Eschenburg spaces, or Witten manifolds, or products
$\Sigma^7(d)\times\cptwom$, then $S^\prime$ is the connected sum
of $S$ with a homotopy sphere that bounds a parallelizable
manifold.

$\bullet$ If $S^\prime$ is a closed simply connected manifold
of dimension at least 5, and if $S^\prime$ is diffeomorphic
to the connected sum of $S$ and a homotopy sphere
that bounds a parallelizable manifold, then
there are nontrivial $\mathbb R^2$-bundles over $S$ and $S^\prime$
with diffeomorphic total spaces
(see Theorem~\ref{thm: diffeo total spaces}).

$\bullet$ If $S$ and $S^\prime$ are simply connected souls of codimension $2$
and dimension at least 5, then $S$ is diffeomorphic
to the connected sum of $S^\prime$ and a homotopy sphere
that bounds a parallelizable manifold
(see~\cite[Theorem 1.9]{BKS1} where the conclusion
that the homotopy sphere bounds a parallelizable manifold
is missing, but is implied by the proof there).

The structure of the paper is as follows.
Section~\ref{sec: surgery basics} supplies the needed
background on surgery theory, and
Section~\ref{sec: sketch} contains a sketch of the proof of
Theorem~\ref{intro-thm: CPq cancells}.
In Section~\ref{sec: dihotomy} we explain how
Theorem~\ref{intro-thm: CPq cancells} follows from
what we call the {\bf Dichotomy Principle}:
A homotopy self-equivalence of
$S^7\times\cptwom$ is either homotopic to
a diffeomorphism or else has a nontrivial normal invariant.
As a starting point in proving the Dichotomy Principle,
in Section~\ref{sec: factorization} we describe a
canonical factorization of a homotopy self-equivalence $f$ of
$S^7\times\cptwom$ into the
composition of a diffeomorphism and two
homotopy self-equivalences each arising from a map
of one factor of $S^7\times\cptwom$
into the space of homotopy self-equivalences
of the other factor.
Proving the Dichotomy Principle
for homotopy self-equivalences coming from
maps of $S^7$ into the space of homotopy self-equivalences
of $\cpq$ will require $(i)$ a spectral sequence from
\cite{Sch73} which is studied further in
Section~\ref{sec: spectral seq} and $(ii)$ properties of
surgery-theoretic structure sets which are described in
Section~\ref{sec: self and structure sets}.
Sections~\ref{sec: stably trivial is trivial}--\ref{sec: order 2 nontrivial normal inv}
analyze how certain groups of homotopy classes map into surgery-theoretic
structure sets.  The Dichotomy Principle is finally established in
Section~\ref{sec: dichotomy proof and skeleton filtrations}, and
Theorem~\ref{intro-thm: CP2 cancells} is proved in
Section~\ref{sec: index 4}.
As a by-product of our methods, we give in
Section~\ref{sec: tang hom eq}  a diffeomorphism
classification of manifolds that are
tangentially homotopy equivalent to $S^7\times\cptwo$.
Section~\ref{sec: codim 2 souls} contains a surgery theoretic argument
which proves Theorem~\ref{intro-thm: nondiffe souls}.

{\bf Acknowledgments. }
The first author (I. B.)  was partially supported by NSF Grant \# DMS-0804038,
and the second author (S. K.) was partially supported by State of Louisiana Board of
Regents Award LEQSF(2008-2011)-RD-A-24.
We are especially grateful to the referee for an
extraordinarily thorough report on
this paper and a wide range of very constructive comments.

\section{On classifying smooth manifolds via surgery}
\label{sec: surgery basics}

In this section we describe some results of surgery theory
that are used throughout this paper.
Background references for surgery are Wall's book~\cite{Wal-book},
especially Chapters $3$ and $10$,
Browder's book~\cite{Bro-book} for the simply connected case,
and the more recent book by Ranicki~\cite{Ran-book}.

Let $M^n$ be a compact smooth manifold, with or
without boundary, where
both $M$ and $\partial M$ are assumed connected unless stated otherwise.
We also assume that $n\ge 6$ if $\d M\neq\emptyset$,
and $n\ge 5$ otherwise.
A {\sl simple homotopy structure on} $M$ is a
pair $(N,f)$ consisting of a compact smooth manifold $N$ and
a simple homotopy equivalence of manifolds with boundary
(in other words, a homotopy equivalence of pairs).
Two such structures $(N_1,f_1)$ and $(N_2,f_2)$ are said to
be {\sl equivalent} if there is a diffeomorphism
$h:N_1\to N_2$ such that ${f_2}\tinycirc h\simeq f_1$,
where again the homotopy is a homotopy of pairs.  The set
of all such equivalence classes is a pointed set which
is often called  the {\it simple structure set} of $M$ and
denoted by ${\bf S}^s(M)$.
Its base point is the class of the identity $(M,{\bf id}_M)$,
and the pointed set fits into an exact {\sl Sullivan-Wall
surgery exact sequence}
\[
\dots\to
[\Sigma(M/\d M), F/O]\stackrel{\s}{\to}
L^s_{n+1}(\pi_1(M),\pi_1(\partial M))
\stackrel{\Delta}{\to}{\bf S}^s(M)
\stackrel{{\mathfrak q}}{\to} [M,F/O]
\stackrel{\s}{\to}\dots
\]
which continues indefinitely to the left. The space $F/O$
(denoted by $G/O$ in \cite[p. 46]{Bro-book} and many
other references) represents a homotopy functor describable as follows:
Given a compact space $X$, a class in $[X,F/O]$ is an appropriately defined
equivalence class of pairs $(\alpha,\Phi)$ consisting
of a stable vector bundle $\alpha$ over $X$ together with a
stable fiber homotopy trivialization $\Phi$ of its unit sphere
bundle (or equivalently of its fiberwise one point compactification provided
$X$ is compact);
more will be said about $F/O$ at the end of this section.

The surgery exact sequence also continues
one step to the right with a {\sl surgery obstruction map\/}
$\s\co [M,F/O]\to
L^s_{n}\left(\,\pi_1(M),\pi_1(\partial M)\,\right)$, where
the latter is an abelian group,
called the {\sl Wall\/} (or {\sl surgery obstruction\/})
{\sl group\/}.  This
group depends only on the (inclusion induced)
homomorphism $\pi_1(\partial M)\to\pi_1(M)$,
the residue class of $m$ modulo $4$, and
the (orientation) homomorphism $w\co\pi_1(M)\to\mathbb Z_2$,
which we omit from the notation because {\sl we only work with
orientable manifolds in this paper\/}. The map
$\Delta$ comes from an action of $L^s_{n+1}(\pi_1(M),\pi_1
(\partial M))$ on the pointed set ${\bf S}^s(M)$.
The map ${\mathfrak q}$ from ${\bf S}^s(M)$ to the set of
homotopy classes $[M,F/O]$ is called the {\sl normal invariant\/}.

Exactness at the term ${\bf S}^s(M)$ means that
two simple homotopy structures have equal normal
invariants if and only if they are in the same orbit of the group
action which defines $\Delta$. Although $[M, F/O]$ is an abelian
group, the surgery obstruction map $\sigma$
is not necessarily
a homomorphism, and exactness at $[M, F/O]$ means that
the inverse image of its zero element under ${\mathfrak q}$
is equal to $\s^{-1}(0)$;
however, $\s$ becomes a
homomorphism in the continuation
of the surgery sequence to the left
starting with
$[\Sigma(M/\d M), F/O]\to
L^s_{n+1}(\pi_1(M),\pi_1(\partial M))$~\cite[Proposition 10.7]{Wal-book}.

Although surgery theory in principle yields a diffeomorphism
classification for closed manifolds with a fixed homotopy type, it
does so indirectly in terms of homotopy theory, and
a complete classification is known for only a few
homotopy types.

If the inclusion $\d M\to M$ induces an isomorphism of
fundamental groups, then the
relative Wall groups vanish by Wall's $\pi-\pi$ Theorem
\cite[Theorem 3.3]{Wal-book}, and ${\bf S}^s(M)$
maps bijectively to $[M,F/O]$ via $\mathfrak q$.

If $M=S^n$, then ${\bf S}^s(S^n)=\Theta_n$,
the set of oriented diffeomorphism classes of homotopy
$n$-spheres \cite{KM}, which has a group structure defined
by connected sum.
The subgroup $bP_{n+1}$ of homotopy $n$-spheres that bound
parallelizable manifolds can be identified with
the image of the homomorphism $L^s_{n+1}(1)\to {\bf S}^s(S^n)$.

More generally, if $M$ is closed and simply connected then
the action of $L^s_{n+1}(1)$ on ${\bf S}^s(M)$ factors through
the $bP_{n+1}$-action via connected sum, and
for every two homotopy equivalences $f_1, f_2\co N\to M$
with equal normal invariants,
$f_1$ is the connected sum of $f_2$ with an
orientation-preserving homeomorphism $\Sigma^n\to S^n$
where $\Sigma^n$ represents an element of $bP_{n+1}$;
in particular, $N_1$ is diffeomorphic to $N_2\,\#\,\Sigma^n$.

The results of~\cite{KerMil}
show that $bP_{n+1}$ is a finite cyclic group
which vanishes if $n$ is even, and has order
at most $2$ if $n=4r+1$ (the order is 1 if $r=0,1,3,7$ or 15, not yet
known if $r=31$, and 2 otherwise). On the other hand, the order of
$bP_{4r}$ grows exponentially with $r$, and this is the case we
study in the present paper.
Each element of $bP_{4r}$ with $r\ge 2$ is represented by a homotopy
sphere $\Sigma^{4r-1}(d)$ that bounds a parallelizable manifold $W$
of signature $8d$; the oriented diffeomorphism type of $\Sigma^{4r-1}(d)$
depends only on the signature of the cobounding manifold $W$, so that
the homotopy sphere $\Sigma^{4r-1}(1)$ generates $bP_{4r}$ and
$\Sigma^{4r-1}(0)=S^{4r-1}$.

We shall illustrate how surgery works for an example which is
central to this paper.
Removing an open disk from the interior of
a parallelizable manifold with boundary
$\Sigma^{4r-1}(d)$ yields a parallelizable
cobordism $W^{4r}$ between $\Sigma^{4r-1}(d)$ and $S^{4r-1}$,
and hence defines a normal map
\[
F\co (W^{4r},\d W^{4r})\to (S^{4r-1}\times I, S^{4r-1}\times \d I)
\]
covered by an isomorphism of trivial stable normal bundles.
The surgery obstruction is preserved by
products with $\cptwom$~\cite[Theorem III.5.4]{Bro-book},
so that $\s(F\times {\bf id}(\cptwom))=\s(F)=d\in L_{4m+4r}(1)=\Z$.
Let $f\co U^{4m+4r}(d)\to D^{4m+4r}$ be a (boundary preserving)
degree one map, where $U^{4m+4r}(d)$ is
a parallelizable manifold that bounds $\Sigma^{4m+4r-1}(d)$.
Taking boundary connected sums of $F\times {\bf id}(\cptwom)$
and $f$ along the boundary component $S^{4r-1}\times\cptwom$
defines a normal map with zero surgery obstruction, hence
it can be turned into a simple homotopy equivalence
via surgery, in other words, we obtain an $s$-cobordism between
$\Sigma^{4r-1}(d)\times\cptwom$ and
$(S^{4r-1}\times\cptwom)\,\#\,\Sigma^{4m+4r-1}(d)$, which are
therefore diffeomorphic. In summary, the following holds:

\begin{fact} \label{fact: browder}
Let $M=S^{4r-1}\times\cptwom$ where $r\ge 2$.
If $h\co\Sigma^{4r-1}(d)\to S^{4r-1}$ and
$H\co \Sigma^{4m+4r-1}(d)\to S^{4m+4r-1}$
are orientation-preserving
homeomorphisms, then the simple homotopy structures
\begin{align*}
h\times{\bf id}(\cptwom)\co \Sigma^{4r-1}(d)\times
\cptwom\to S^{4r-1}\times\cptwom \\
H\,\#\,{\bf id}(M)\co
\Sigma^{4m+4r-1}(d)\,\#\,M\to
S^{4m+4r-1}\,\#\,M=M
\end{align*}
represent the same element $\Delta(d)$
in the structure set ${\bf S}^s(M)$.
\end{fact}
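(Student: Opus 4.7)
The plan is to show that both structures in the statement equal $\Delta(d)\cdot[{\bf id}(M)]$, where $L^s_{4m+4r}(1)$ is identified with $\mathbb Z$ by signature divided by $8$. Each structure will be realized as the outcome of a Wall realization of $d$ starting from the base-point structure $[{\bf id}(M)]$, and the conclusion then follows tautologically from the definition of the action.

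For the first structure I would apply the Kervaire-Milnor construction. Choose a parallelizable manifold $V^{4r}$ with $\partial V=\Sigma^{4r-1}(d)$ and signature $8d$, and remove an open disk from its interior to obtain a parallelizable cobordism $W^{4r}$ with $\partial W=\Sigma^{4r-1}(d)\sqcup S^{4r-1}$. Parallelizability supplies a degree one normal map
\[
F\co(W,\partial W)\to (S^{4r-1}\times I,\ S^{4r-1}\times\partial I)
\]
restricting to ${\bf id}(S^{4r-1})$ on one boundary component and to $h$ on the other, with simply-connected surgery obstruction $\sigma(F)=d\in L^s_{4r}(1)\cong\mathbb Z$. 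Crossing with ${\bf id}(\cptwom)$ and applying Browder's product formula~\cite[Thm.~III.5.4]{Bro-book} (together with the fact that $\cptwom$ has signature $1$) produces a Wall realization of $d\in L^s_{4m+4r}(1)$: a normal cobordism from $M$ to $\Sigma^{4r-1}(d)\times\cptwom$ that restricts to ${\bf id}(M)$ on one end and to the simple homotopy equivalence $h\times{\bf id}(\cptwom)$ on the other. By the definition of the action of the Wall group on the structure set, this identifies $[h\times{\bf id}(\cptwom)]$ with $\Delta(d)\cdot[{\bf id}(M)]$.

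For the second structure the discussion preceding the Fact already recalls that for closed simply-connected $M$ the action of $L^s_{4m+4r}(1)$ on ${\bf S}^s(M)$ factors through the $bP_{4m+4r}$-action by connected sum, and that $d\in L^s_{4m+4r}(1)\cong\mathbb Z$ projects to the class of $\Sigma^{4m+4r-1}(d)$ in $bP_{4m+4r}$. Hence $\Delta(d)\cdot[{\bf id}(M)]$ is represented by
\[
{\bf id}(M)\#H\co M\#\Sigma^{4m+4r-1}(d)\to M\#S^{4m+4r-1}=M,
\]
which is the same simple homotopy structure as $H\#{\bf id}(M)$ by commutativity of the connected sum.

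The main technical care needed is keeping normalizations consistent: one must verify that the parametrization of $\Sigma^n(d)$ by signature $8d$ of a bounding parallelizable manifold agrees on both sides with the identification $L^s_{4k}(1)\cong\mathbb Z$ by signature/$8$, and that Browder's product formula reproduces $d$ with coefficient exactly $1$, which is the case because $\cptwom$ has signature one. Once these conventions are pinned down, both structures arise by inspection from Wall realizations of the same element $d\in L^s_{4m+4r}(1)$ with identity initial end, and therefore coincide as elements of ${\bf S}^s(M)$.
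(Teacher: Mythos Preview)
Your proposal is correct and follows essentially the same approach as the paper: both arguments use the Kervaire--Milnor parallelizable cobordism $W^{4r}$, cross with $\cptwom$, and invoke Browder's product formula \cite[Theorem III.5.4]{Bro-book} to identify the surgery obstruction as $d$. The only cosmetic difference is that the paper finishes by explicitly boundary-connect-summing $F\times{\bf id}(\cptwom)$ with a parallelizable coboundary of $\Sigma^{4m+4r-1}(d)$ to produce an $s$-cobordism between the two structures, whereas you identify each structure separately as $\Delta(d)\cdot[{\bf id}(M)]$ and appeal to the general fact (already recalled just before the Fact) that the $L^s_{4m+4r}(1)$-action factors through connected sum with $bP_{4m+4r}$.
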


Determining the kernel of $\Delta$
is a major step in the diffeomorphism classification
of closed manifolds homotopy equivalent to $M$.
The {\it homotopy inertia group} $I_h(M)$ of an $n$-manifold $M$
is the group of all $\Sigma\in\Theta_n$ such that
the standard homeomorphism $M\#\Sigma\to M$ is homotopic
to a diffeomorphism. The kernel of
$\Delta\co L_{n+1}^s(\pi_1(M))\to {\bf S}^s(M)$ is called the
{\it surgery inertia group} and denoted $I_\Delta(M)$.
If $M$ is closed and simply connected, then
$I_\Delta(M)$ is the preimage of $I_h(M)\cap bP_{n+1}$
under $\Delta$.

In particular, if $M$ is a closed simply connected $(4r-1)$-manifold,
then $d\in\mathbb Z=L_{4r}^s(1)$ acts on ${\bf S}^s(M)$ by
taking connected sum with $\Sigma^{4r-1}(d)$, and
$\Delta (d)$ is trivial in ${\bf S}^s(M)$
if and only if $\Sigma(d)\in I_h(M)$ (compare~\cite[II.4.10]{Bro-book}).

A key ingredient of our work is the proof
given in~\cite[Theorem 2.1]{Sch87}
of the following result due to L. Taylor \cite{Tay}.

\begin{thm}
\label{thm: taylor}
If $M$ is a closed oriented smooth manifold of
dimension $4r-1\geq 7$, then the subgroup
$I_h(M)\cap bP_{4r}$ of $bP_{4r}$ has index $\ge 2$.
\end{thm}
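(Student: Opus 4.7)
The plan is, since $bP_{4r}$ is cyclic, to exhibit a single homotopy sphere in $bP_{4r}$ that is not in $I_h(M)$. I would try to show this for the Kervaire--Milnor generator $\Sigma=\Sigma^{4r-1}(1)$ by assuming the contrary and deriving a signature contradiction.

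Suppose $\Sigma\in I_h(M)$, realized by a diffeomorphism $\phi\co M\#\Sigma\to M$ homotopic to the standard homeomorphism. Let $V^{4r}$ be a parallelizable manifold with $\partial V=\Sigma$ and $\sigma(V)=8$. Form the boundary connected sum cobordism $W=(M\times I)\,\natural_\partial\,V$, with $\partial W=M\sqcup (M\#\Sigma)$ and $\sigma(W)=8$ by Novikov additivity. Glue the two boundary components of $W$ using $\phi$ to obtain a closed smooth $4r$-manifold $Y$, with $\sigma(Y)=8$ again by Novikov additivity. On the $V$-piece of $Y$ the tangent bundle is trivial; on the complementary piece, which is diffeomorphic to the complement of a ball in $M\times S^1$, the tangent bundle is $\pi^*\tau_M\oplus\epsilon$. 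In particular the top Pontryagin class $p_r(\tau_Y)$ vanishes globally, because $p_r(M)$ lies in $H^{4r}(M)=0$. The hypothesis that $\phi$ is homotopic to the standard homeomorphism is used here to identify the smooth tangential data on $Y\setminus V$ with that pulled back from $M\times S^1$.

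The desired contradiction would come from combining this vanishing of $p_r(\tau_Y)$ with a Kervaire--Milnor type divisibility: for closed $4r$-manifolds that are almost parallelizable (i.e.\ $p_r$ is the only possibly nonzero Pontryagin class), the signature lies in $8a_r\mathbb{Z}$, where $a_r=|bP_{4r}|\ge 28$ for $r\ge 2$. Since $8\notin 8a_r\mathbb{Z}$, this contradicts $\sigma(Y)=8$ and forces $\Sigma\notin I_h(M)$, establishing the index bound.

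The main obstacle is that $Y$ is not literally almost parallelizable when $M$ has nontrivial lower Pontryagin classes, so the naive Kervaire--Milnor divisibility cannot be invoked directly. Schultz's proof in~\cite[Theorem~2.1]{Sch87} handles this by working at the prime $2$: combining Sullivan's description of $F/O_{(2)}$ in terms of $KO$-characteristic classes with the surgery exact sequence for $M$, he extracts a $\mathbb{Z}/2$-valued invariant on the image of the $bP_{4r}$-action on $\mathbf{S}^s(M)$ that refines the signature argument above and remains non-trivial without almost-parallelizability, and also without restrictions on $\pi_1(M)$ or on spin structure. The delicate bookkeeping of this $KO$-theoretic refinement--rather than the overall structure of the argument sketched above--is the technical heart of the proof.
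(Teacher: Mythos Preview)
The paper does not give its own proof of Taylor's theorem; it simply quotes the result and refers to \cite[Theorem~2.1]{Sch87}. So there is no in-paper argument to compare against, and your proposal ultimately defers to the same reference. That said, a few remarks are in order.

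Your geometric construction is sound: the cobordism $W$ does have signature $8$, and gluing by $\phi$ yields a closed $4r$-manifold $Y$ with $\sigma(Y)=8$. The obstacle you name is exactly the right one: without almost-parallelizability of $Y$ one cannot invoke Kervaire--Milnor divisibility directly, because the Hirzebruch $L$-polynomial involves the lower Pontryagin classes of $M$, and those can contribute nontrivially to $\sigma(Y)$. So what you have written is a motivation, not a proof, and you correctly flag this.

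Your description of Schultz's actual argument is in the right spirit but not quite how it runs. From Section~\ref{sec: index 4} of the paper one can reconstruct the shape of the proof: by exactness of the surgery sequence, $I_h(M)\cap bP_{4r}$ is the image of the surgery obstruction $\sigma\colon[\Sigma M, F/O]\to L^s_{4r}(1)\cong\mathbb Z$, so the index bound amounts to showing this image lies in $2\mathbb Z$. Schultz does this by proving that for every stably fiber-homotopically trivial vector bundle $\xi$ over $\Sigma M$, each Pontryagin class $p_m(\xi)$ is divisible by $2j_{4m}$, where $j_{4m}$ is the order of the image of $J$ in dimension $4m-1$; this divisibility, fed into the Hirzebruch formula, forces $\sigma(\xi)\in 2\mathbb Z$. (This is the ``Sublemma~2.3'' of \cite{Sch87} that the paper sharpens in Section~\ref{sec: index 4} to push the index up to $4$ for $S^7\times\mathbb{CP}^2$.) So the argument stays entirely within the surgery exact sequence and characteristic-class computations over $\Sigma M$; it does not pass through the closed manifold $Y$ you build. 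Your construction is a pleasant geometric reinterpretation of why one expects index $\ge 2$, but the actual proof is the Pontryagin-class divisibility statement.
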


Taylor's theorem gives the best general estimate for
$I_h(M)\cap bP_{4r}$; {\it e.g.\/}, if
$M=S^3\times\mathbb CP^{2m}$ with $m\geq 1$, then
the index of $I_h(M)\cap bP_{4r}$ in $bP_{4r}$ is $2$
\cite[Example 2, p. 190]{Sch87}.

On the other hand, much sharper estimates exist if $M$ satisfies
some relatively mild restrictions; for example \cite[Theorem 2.13]{Bro-inert}
implies that $I_h(M)\cap bP_{4r}$ is trivial if $M$ is a
simply connected, stably parallelizable closed Spin manifold
of dimension $4r-1\geq 7$.  In Section~\ref{sec: index 4}
we show that $I_h(M)\cap bP_{4r}$ has index $4$ in $bP_{4r}$
if $M=S^7\times\cptwo$.

Even though Taylor's result ensures that
the standard homeomorphism from
$\Sigma^7(1)\,\#\,(S^3\times\cptwo )$ to $S^3\times\cptwo$
is not homotopic to a diffeomorphism,
these two manifolds are diffeomorphic
as proved in~\cite[Corollary 4.2]{MasSch}.

This naturally brings us to another source of nontrivial
elements in ${\bf S}^s(M)$; namely, simple homotopy self-equivalences
of $M$ that are not homotopic to diffeomorphisms.
For the purposes of diffeomorphism classification, we should identify
two simple homotopy structures $f_1, f_2\co N\to M$ which differ
by a simple homotopy self-equivalence of $M$; {\it i.e.\/},
we need to take the quotient of ${\bf S}^s(M)$ by the action of
the group ${\mathcal E}^s(M,\partial M)$ of homotopy classes of simple homotopy
self-equivalences of $(M,\partial M)$ via composition:
\[
[h]\cdot [N,f]~~=~~[N,h\tinycirc f]
\]
where $(N,f)$ represents a class in ${\bf S}^s(M)$ and $h\in
{\mathcal E}^s(M,\partial M)$.
With rare exceptions the group ${\mathcal E}^s(M,\partial M)$ is
extremely hard to compute, even when $M$ is simply connected and
has relatively few nontrivial homology groups;
in this case all homotopy equivalences are simple
so in agreement with earlier notation
we write ${\mathcal E}$ instead of ${\mathcal E}^s$ when
$M$ is simply connected.

When comparing elements of ${\bf S}^s(M)$ that differ by a homotopy
self-equivalence the following
{\it composition formula for normal invariants} is useful
(see~\cite[p. 144]{Sch71} or~\cite[Corollary 2.6]{MTW}):
\[
{\mathfrak q}(g\tinycirc h)~~=~~
{\mathfrak q}(g)~+~\left(g^*\right)^{-1}\,{\mathfrak q}(h)
\]
Here $g$ represents a class in ${\bf S}^s(M)$, while $h$
is a homotopy self-equivalence of $M$ and
the operation ``+'' refers to the abelian group structure in $[M, F/O]$
induced by the Whitney sum in $F/O$.

A more general version of surgery theory includes
{\sl relative simple structure sets\/}
${\bf S}^{s}(M~\text{rel}~\d M)$
of simple homotopy structures $(N,f)$, where
$f:(N,\partial N)\to (M,\partial M)$ is a simple homotopy equivalence
of pairs which is a diffeomorphism on a neighborhood of
$\d N$.  Two such structures
$(N_1,f_1)$ and $(N_2,f_2)$ are said to
be equivalent if there is a diffeomorphism
$h:N_1\to N_2$ such that ${f_2}\tinycirc h$ and $f_1$
are homotopic through a map of pairs that induces a
diffeomorphism on a neighborhood of $\partial N_1\times[0,1]$.
There is a corresponding surgery sequence which is exact
for $n\ge 5$:
\[
[\Sigma(M/\d M),F/O]~\to~L^s_{n+1}\left(\,\pi_1(M)\,\right)~\to~
{\bf S}^{s}(M~\text{rel}~\d M)~\to~[M/\d M,F/O].
\]

We shall need the following proposition that
is a direct consequence of the bordism construction for the
surgery exact sequence in \cite[Chapter 10]{Wal-book}.

\begin{prop}
\label{prop: restriction of normal invariants}
Let $M$ be a closed manifold, and
$M_1$, $M_2$ be codimension zero compact
submanifolds of $M$ such that
$\partial M_1=M_1\cap M_2=\partial M_2$.
Let $h\co (N,\partial N)\to
(M_1,\partial M_1)$ be a simple homotopy structure which is an
isomorphism on the boundary, and let
$H\co N\cup_{\partial h} M_2\to M=M_1\cup M_2$ be
the simple homotopy structure obtained by attaching copies of
$\partial M_2$ along the boundaries.
Then the normal invariants of $H$
and $h$, viewed as elements of $[M,F/O]$ and $[M_1/\partial
M_1,F/O]$, are related by the equation
${\mathfrak q}(H)=c^*{\mathfrak q}(h)$, where
$c:M\to M/M_2\cong M_1/\partial M_1$
is the collapsing map.
\end{prop}

Finally, we summarize some results on classifying
spaces of surgery theory
(see~\cite[Chapter 3A]{MM} and~\cite[Section 9.2]{Ran-book}).
Denote the topological monoid of homotopy
self-equivalences of $S^k$ by $G_{k+1}$,
and denote its identity component by $SG_{k+1}$.
Let $F_k$ denote the submonoid of
$G_{k+1}$ consisting of base point
preserving self-maps, and let $SF_k=F_k\cap SG_{k+1}$.
The evaluation map defines a fibration $G_{k+1}\to S^k$
with fiber $F_k$, which restricts to a fibration $SG_{k+1}\to S^k$
with fiber $SF_k$.

The space $SF_k$ has the homotopy type of the component of the
constant map in the iterated loop space $\Omega^kS^k$, and
therefore $\pi_n(SF_k)\cong\pi_{n+k}(S^k)$ for all $n\geq 1$.
There is a sequence of inclusions
\begin{equation}
\label{form: mixed monoids}
\dots~\to~F_{k-1}~\to~G_k~\to~F_k~\to~G_{k+1}~\to~\dots
\end{equation}
where each map is an injective continuous monoid homomorphism.
The direct limits of the subsequences $\{F_k\}$ and $\{G_k\}$ are
usually denoted $F$ and $G$; this notation is actually redundant,
for $F$ and $G$ are isomorphic as topological monoids
because $\{F_k\}$ and $\{G_k\}$ are cofinal in (\ref{form: mixed monoids}).
In what follows we shall use $F$ in conformity with \cite{BecSch}.
By \cite{Stasheff63} the classifying space $BF$ of $F$ is also a
classifying space for
stable fiber homotopy equivalence classes of spherical
fibrations.

If $O$ denotes the increasing union of the orthogonal groups
$\cup_{n\geq 1} O_n$, then
there exists a (homotopy) exact sequence of $H$-spaces
\begin{equation}
\label{eq: fibration F/O}
O~\to~F~\to~F/O~\to~BO~\to~BF
\end{equation}
in which any three consecutive terms
form a fibration and $F/O$ is the space which previously appeared in the
surgery exact sequence.
If $X$ is a space and we apply the covariant homotopy class functor
$A~~\rightsquigarrow~~[X,A]$
to this fibration sequence, we obtain an
exact sequence of abelian groups
$$\cdots~\to~[\Sigma X,F/O]~\to~[X,O]~\to~[X,F]~\to~[X,F/O]~\to~[X,BO]~\to~[X,BF]$$
where the groups $[X, F]$ and $[X,BF]$ are finite if
$X$ is a finite complex.
We shall also use standard facts on
homotopy groups of $O$, $F$ and $F/O$, and the image of
the $J$-homomorphism $O\to F$, all of which can be found
in~\cite{Ada-J-IV, Hus, Levine83, Ran-book, Tod}.

\section{Sketch of the proof of Theorem~\ref{intro-thm: CPq cancells}}
\label{sec: sketch}

Let $M=S^k\times\cptwom$ where $m\geq 1$ and $k=4r-1\ge 7$.
By Fact~\ref{fact: browder} we know that
$\Sigma^k(d)\times\cptwom$ and $\Sigma^k(d')\times\cptwom$
are diffeomorphic if and only if
$S^k\times\cptwom\,\#\,\Sigma^{4m+k}(d)$ and
$S^k\times\cptwom\,\#\,\Sigma^{4m+k}(d')$ are.
If we take connected sums with $\Sigma^{4m+k}(-d^\prime)$
we see that the preceding statements hold if and only if
$S^k\times\cptwom\,\#\,\Sigma^{4m+k}(d-d')$ and
$S^k\times\cptwom$ are diffeomorphic.  Therefore the proof of
Theorem~\ref{intro-thm: CPq cancells} reduces to showing that
{\sl the manifolds $S^k\times\cptwom\,\#\,\Sigma^{4m+k}(d-d')$ and
$S^k\times\cptwom$ are {\sf not} diffeomorphic if $d-d'$ is odd
(equivalently, $d-d'$ must be even if the manifolds {\sf are}
diffeomorphic).}

Suppose now that $S^k\times\cptwom\,\#\,\Sigma^{4m+k}(d-d')$ and
$S^k\times\cptwom$ are diffeomorphic.  Let $H:\Sigma^{4m+k}(d-d')\to
S^{4m+k}$ be the standard orientation-preserving homeomorphism
which is the ``identity'' on some coordinate disk, and
let $g$ be a connected sum of the identity on $S^k\times\cptwom$
with $H$.  Then the composite $h=g\tinycirc\varphi^{-1}$
defines a homotopy self-equivalence of $S^k\times\cptwom$ whose
image in the structure set ${\bf S}^s(S^k\times\cptwom)$ is
equal to $\Delta(d-d')$, where $\Delta$ is the Wall group action
map in the surgery sequence for $S^k\times\cptwom$.

By the discussion in the preceding paragraph, the proof of
Theorem~\ref{intro-thm: CPq cancells} reduces to showing that
{\sl if $k=7$, then there is no homotopy self-equivalence $h$ of
$S^k\times\cptwom$
whose class in the structure set is $\Delta(c)$, where $c$ is odd. }
Note that the normal invariant of such a self-equivalence must be
trivial by the exactness of the surgery sequence.

Our approach to studying this problem is to factor $h$ as a product
of self-equivalences which can be analyzed individually.  In
Chapter~\ref{sec: factorization} we describe such a factorization,
showing that $h$ is homotopic to a composite $f\tinycirc f'\tinycirc \theta$,
where $\theta$ is a diffeomorphism --- which we can ignore --- and two
homotopy self-equivalences $f$, $f^\prime$ coming from adjoints of
representatives for classes in $\pi_k(E_1(\cpq))$ and $[\cpq,E_1(S^k)]$,
respectively, where $E_1(Y)$ denotes the identity component in the
topological monoid of self-maps of a compact Hausdorff space $Y$ (with
the compact open topology).  To simplify the
discussion below, we shall simply say that $f$ and $f'$ {\sl come from}
$\pi_k(E_1(\cpq))$ {\sl and} $[\cpq,E_1(S^k)]$ respectively.

In general, if we are given a composite homotopy self-equivalence $f\tinycirc f'$
whose class in the structure set lies in the image of $\Delta$, then its
normal invariant vanishes by exactness, but there is no {\sl a priori\/}
reason why the normal invariants of $f$ and $f'$ must vanish.
By the composition formula for normal invariants,all we can say is that
the sum ${\mathfrak q}(f)+f^{\ast -1}{\mathfrak q}(f^\prime)$
vanishes.  However, in Section~\ref{sec: dichotomy proof and skeleton filtrations}
we shall prove that if $k=7$ then
their normal invariants ${\mathfrak q}(f)$ and ${\mathfrak q}(f^\prime)$ must
vanish for the factorization $h\simeq f\tinycirc f'\tinycirc\theta$ described
above.  Roughly speaking, the idea is to show that
${\mathfrak q}(f)$ and ${\mathfrak q}(f^\prime)$ lie in complementary
subgroups of $[S^k\times\cptwom, F/O]$.

The next step requires specialization to $k=7$, and it involves
proving that {\sl if $\alpha$ is a homotopy self-equivalence of
$S^k\times\cptwom$ coming from either $\pi_7(E_1(\cptwom))$ or $[\cptwom,E_1(S^7)]$,
then $\alpha$ is homotopic to a diffeomorphism if and only if its normal
invariant vanishes. }

If $\alpha$ comes from $[\cpq,E_1(S^7)]$, this follows quickly from the $\pi-\pi$
Theorem, and if $\alpha$ comes from $\pi_7(E_1(\cpq))$ with $q\geq 3$, then
by~\cite[Proposition 4.2]{Sch87} the vanishing
of ${\mathfrak q}(f)\in [S^7\times\cpq, F/O]$
implies that $\alpha$ is homotopic to the identity (and is {\sl a fortiori\/}
homotopic to a diffeomorphism).  For the sake of
completeness we shall give a self-contained proof of the latter in
Section~\ref{sec: order 2 nontrivial normal inv} (not all the proofs for results
used in~\cite[Section 4]{Sch87} have been published).  If $q=2$ and
$\alpha$ comes from $\pi_7(E_1(\cpq))$, then additional work is needed.
The main difference between this and the other cases is that the groups
$\pi_7(E_1(\cpq))\cong\Z_2$ are isomorphic
when $q\geq 3$ by stabilization theorems from~\cite{Sch73}, but the results
from the latter only show that there is a surjective stabilization map
$\pi_7(E_1(\cptwo)) \to \pi_7(E_1(\cpthree))$ whose kernel has order at most
2.  We are able to show that if a nontrivial unstable class exists, then
a representative homotopy self-equivalence $\alpha$ coming from
$\pi_7(E_1(\cptwo))$ must be homotopic to a diffeomorphism, and this is
the key step in proving that a homotopy equivalence $\alpha$ coming from
$\pi_7(E_1(\cptwo))$ is homotopic to a diffeomorphism if and only if its
normal invariant is trivial.

We can now bring everything together for the proof of
Theorem~\ref{intro-thm: CPq cancells} as follows:
Given a diffeomorphism $\varphi$ as above, define a homotopy self-equivalence
$h=g\tinycirc\varphi^{-1}$, where $g$ is described at the beginning of this
section.  Since $g$ is normally cobordant to the identity and $\varphi$ is a
diffeomorphism, it follows that $h$ is also normally cobordant to the identity.
We can now factor $h$ as a composite $f\tinycirc f'\tinycirc \theta$,
where $\theta$ is a diffeomorphism and $f$ and $f^\prime$ come from adjoints of
representatives for classes in $\pi_7(E_1(\cptwom))$ and $[\cptwom,E_1(S^7)]$
respectively.  As noted above, it follows that $f$ and $f^\prime$ are normally
cobordant to the identity and in fact are homotopic to diffeomorphisms.  We can
now use Taylor's result (Theorem~\ref{thm: taylor}) to conclude that $d-d'$
must be even.

\section{The Dichotomy Principle and its applications}
\label{sec: dihotomy}

For a general closed smooth manifold $M$, an understanding of normal
invariants for simple homotopy self-equivalences of $M$ is usually
indispensable to a diffeomorphism classification of smooth manifolds
which are simply homotopy equivalent to $M$.  We shall prove
the following strong but simply stated result for the manifolds
$M=S^7\times\cptwom$ (where $m\geq 1$).

\begin{thm}
{\bf (Dichotomy Principle)}\
\label{thm: main dichotomy}
If $f$ is a homotopy self-equivalence of
$S^7\times\cptwom$, then $f$ is homotopic to
a diffeomorphism if and only if $f$ has trivial normal invariant.
\end{thm}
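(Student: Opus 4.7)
One direction is immediate: a diffeomorphism has trivial normal invariant, so any $f$ homotopic to one does as well. I focus on the reverse: assuming $\mathfrak{q}(f)=0$, produce a diffeomorphism homotopic to $f$. The plan follows the outline in Section~\ref{sec: sketch}.

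First, after post-composing with a diffeomorphism, I would decompose $f\simeq f_1\tinycirc f_2$, where $f_2$ is the adjoint of a map $S^7\to E_1(\cptwom)$ and $f_1$ is the adjoint of a map $\cptwom\to E_1(S^7)$; this is possible because any self-equivalence of a product separates, up to a diffeomorphism, into its actions on the two factors. Applying the composition formula
\[
\mathfrak{q}(f_1\tinycirc f_2)=\mathfrak{q}(f_1)+(f_1^{\ast})^{-1}\mathfrak{q}(f_2),
\]
I would verify skeleton-by-skeleton that $\mathfrak{q}(f_1)$ and $(f_1^{\ast})^{-1}\mathfrak{q}(f_2)$ restrict nontrivially only to skeleta of different dimensions of $S^7\times\cptwom$, so that no cancellation is possible and both summands must individually vanish. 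To dispose of $f_1$, I would extend it via the fiberwise cone construction to a self-equivalence $\hat f_1$ of $(D^8\times\cptwom,\,S^7\times\cptwom)$ restricting to $f_1$ on the boundary. Vanishing of $\mathfrak{q}(f_1)$ gives vanishing of the normal invariant of $\hat f_1$ in the relative structure set; since the boundary inclusion induces a $\pi_1$-isomorphism, Wall's $\pi$-$\pi$ theorem identifies that relative structure set with the normal invariants, so $\hat f_1$ is trivial there, and restriction to the boundary yields that $f_1$ is homotopic to a diffeomorphism.

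The main obstacle is $f_2$. For $m\ge 2$ the integer $2m$ lies in the stable range where the results of \cite{Sch73,BecSch} apply: $\pi_7(E_1(\cptwom))\cong\mathbb Z\oplus\mathbb Z_2$, where the $\mathbb Z$-summand is the image of $\pi_7(U_{2m+1})$ whose elements are realized by diffeomorphisms, while the $\mathbb Z_2$-generator, by an argument from \cite{Sch87}, produces a self-equivalence with nontrivial normal invariant; hence $\mathfrak{q}(f_2)=0$ forces $f_2$ into the diffeomorphism summand. The case $m=1$ is more delicate, because $\pi_7(E_1(\cptwo))$ is unstable. Here I would use the spectral sequence of \cite{Sch73} converging to $\pi_\ast(E_1(\cpq))$ to compute that the stabilization $\pi_7(E_1(\cptwo))\to\pi_7(E_1(\mathbb{CP}^3))$ has image of order $2$ and kernel of order at most $2$; stably nontrivial elements already produce nontrivial normal invariants, so $f_2$ must come from the finite kernel of stabilization. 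Using that every class in $\pi_7(E_1(\cptwom))$ yields a canonically tangential self-equivalence which can be arranged to fix $\{\ast\}\times\cptwom$ pointwise, $f_2$ descends to a class in a relative tangential structure set on $D^7\times\cptwo$, with $D^7\subset S^7$ the complement of a small open disk about $\ast$. In the associated surgery exact sequence, all maps are homomorphisms and the relevant Wall group is $L^s_{12}(1)=\mathbb Z$; combining torsion-freeness of this group with the finite order of $f_2$ forces triviality of $f_2$ in the relative tangential structure set, and re-extending over $S^7$ yields that $f_2$ is homotopic to a diffeomorphism. Composing the diffeomorphisms obtained for $f_1$ and $f_2$ completes the proof.
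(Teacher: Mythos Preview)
Your proposal is correct and follows essentially the same route as the paper: the factorization $f\simeq f_1\circ f_2$ after a diffeomorphism (Propositions~\ref{prop: product cpq times sk}, \ref{prop: factorization}), the skeletal-filtration argument to separate the two normal invariants (Section~\ref{sec: dichotomy proof and skeleton filtrations}), the fiberwise cone plus $\pi$--$\pi$ argument for the $[\cptwom,E_1(S^7)]$ factor (Proposition~\ref{prop: dichotomy for maps in Gk}), and the stable/unstable analysis of $\pi_7(E_1(\cpq))$ via \cite{Sch73,BecSch} for the other factor (Sections~\ref{sec: eq function spaces}--\ref{sec: order 2 nontrivial normal inv}). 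One point to tighten in the $m=1$ case: the sentence ``torsion-freeness of $L^s_{12}(1)$ plus finite order of $f_2$ forces triviality'' is not sufficient by itself, since ${\bf S}_7^{s,t}(\cptwo)$ may have torsion outside the image of $\mathbb Z$; you must first use that $f_2$ lies in the stabilization kernel to conclude its \emph{refined} normal invariant vanishes (via the commutative square comparing refined normal invariants under stabilization, as in Corollary~\ref{cor: stably trivial in structure set}), so that $f_2$ actually lands in the image of $L^s_{12}(1)$ --- after which your finite-order argument goes through.
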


In other words, there is a dichotomy:
Either $f$ is homotopic to a
diffeomorphism or else $f$ is not even normally
cobordant to the identity.
We shall prove Theorem~\ref{thm: main dichotomy}
later in Section~\ref{sec: dichotomy proof and skeleton filtrations},
and here we shall focus on its applications.

\begin{cor}
\label{cor: diffeo criterionfor sigma7(d)}
If $M=S^7\times\cptwom$, then
the number of oriented diffeomorphism types of
manifolds $\Sigma^7(d)\times \cptwom$ is equal to
the index of $I_h(M)\cap bP_{4m+8}$ in
$bP_{4m+8}$. Explicitly, the manifolds
$\Sigma^7(d)\times \cptwom$ and $\Sigma^7(d^\prime)\times \cptwom$
are orientation-preserving diffeomorphic
if and only if $\Sigma^{4m+7}(d-d^\prime)$ lies in
$I_h(M)$.
\end{cor}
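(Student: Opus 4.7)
The plan is to translate the classification of products $\Sigma^7(d)\times\cptwom$ into a statement about inertia groups via Fact~\ref{fact: browder}, and then pass from the ordinary inertia group to the homotopy inertia group using the Dichotomy Principle.

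Applying Fact~\ref{fact: browder} with $r=2$ shows that $\Sigma^7(d)\times\cptwom$ and $M\,\#\,\Sigma^{4m+7}(d)$ are oriented-diffeomorphic, and since $d\mapsto\Sigma^{4m+7}(d)$ is a surjective homomorphism $\mathbb Z\to bP_{4m+8}$, the oriented-diffeomorphism classes of these manifolds correspond bijectively to the cosets of the ordinary inertia group $I(M)\cap bP_{4m+8}$ in $bP_{4m+8}$; here $I(M)\subseteq\Theta_{4m+7}$ consists of those $\Sigma$ with $M\,\#\,\Sigma$ oriented-diffeomorphic to $M$. Concretely, $\Sigma^7(d)\times\cptwom$ and $\Sigma^7(d^\prime)\times\cptwom$ are oriented-diffeomorphic iff $\Sigma^{4m+7}(d-d^\prime)\in I(M)\cap bP_{4m+8}$, and the number of distinct oriented-diffeomorphism types is $[bP_{4m+8}:I(M)\cap bP_{4m+8}]$.

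It therefore suffices to establish $I(M)\cap bP_{4m+8}=I_h(M)\cap bP_{4m+8}$. The inclusion $\supseteq$ is immediate. For the reverse, fix $\Sigma=\Sigma^{4m+7}(d)$ in $I(M)\cap bP_{4m+8}$ and choose an orientation-preserving diffeomorphism $\phi\co M\,\#\,\Sigma\to M$. The standard homotopy equivalence $h\co M\,\#\,\Sigma\to M$ represents $\Delta(d)\in{\bf S}^s(M)$, so $\mathfrak{q}(h)=0$ by exactness of the surgery sequence. The self-equivalence $\psi=h\tinycirc\phi^{-1}\co M\to M$ satisfies $\psi\tinycirc\phi=h$, so the structures $(M\,\#\,\Sigma,h)$ and $(M,\psi)$ define the same class in ${\bf S}^s(M)$; in particular $\mathfrak{q}(\psi)=0$. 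By the Dichotomy Principle (Theorem~\ref{thm: main dichotomy}), $\psi$ is homotopic to a diffeomorphism $\tilde\psi\co M\to M$, and then $\tilde\psi\tinycirc\phi\co M\,\#\,\Sigma\to M$ is a diffeomorphism homotopic to $h$, showing $\Sigma\in I_h(M)$.

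Both conclusions of the corollary follow at once. The only substantive ingredient is the Dichotomy Principle; once it is in hand, the rest is routine bookkeeping in the surgery exact sequence, so I do not anticipate any further obstacle in this argument.
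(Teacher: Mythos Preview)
Your proof is correct and follows essentially the same approach as the paper's: both reduce via Fact~\ref{fact: browder} to connected sums $M\#\Sigma^{4m+7}(d)$, then use that the standard homeomorphism has trivial normal invariant, and invoke the Dichotomy Principle on the resulting self-equivalence of $M$ to upgrade an arbitrary diffeomorphism to one homotopic to the standard map. Your explicit isolation of the identity $I(M)\cap bP_{4m+8}=I_h(M)\cap bP_{4m+8}$ is a slight organizational improvement over the paper's direct treatment of $d$ and $d^\prime$, but the substance is identical.
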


\begin{proof}
First suppose that $\Sigma^7(d)\times\cptwom$ and
$\Sigma^7(d^\prime)\times\cptwom$ are orientation-preserving
diffeomorphic. By Fact~\ref{fact: browder}
this gives an orientation-preserving diffeomorphism
of $\Sigma^{4m+7}(d^\prime)\,\#\, M$ onto
$\Sigma^{4m+7}(d)\,\#\, M$, and
taking connected sum with $\Sigma^{4m+7}(-d^\prime)$,
we end up with an orientation-preserving
diffeomorphism $\phi$ of $M$ onto $\Sigma^{4m+7}(d-d^\prime)\,\#\, M$.

On the other hand, if
$H\co \Sigma^{4m+7}(d-d^\prime)\to S^{4m+7}$
is an orientation preserving homeomorphism which is the
``identity'' on some coordinate disk neighborhoods, then
the map $g:=H\,\#\, {\bf id}(M)\co
\Sigma^{4m+7}(d-d^\prime)\,\#\, M\to M$, which represents
$\Delta (d-d^\prime)$ in the structure set, has trivial normal invariant
by exactness of the surgery sequence.

The rest of the proof is similar to the argument sketched in the
previous section:  By the composition formula
for normal invariants ${\mathfrak q}(g\tinycirc\phi)$ is
trivial. Thus Theorem~\ref{thm: main dichotomy}
implies that $g\tinycirc\phi$
is homotopic to a diffeomorphism,
and it follows that $g$ is also homotopic to a diffeomorphism; {\it i.e.\/},
$\Sigma^{4m+7}(d-d^\prime)$ lies in the
homotopy inertia group $I_h(M)$, as claimed.

Conversely, if $\Sigma^{4m+7}(d-d^\prime)$ lies in the
homotopy inertia group $I_h(M)$, then
$\Sigma^{4m+7}(d-d^\prime)\,\#\, M$ is diffeomorphic to $M$, so
taking
connected sum with $\Sigma^{4m+7}(d^\prime)$, and an application of
Fact~\ref{fact: browder}
gives an orientation-preserving diffeomorphism
of $\Sigma^7(d)\times\cptwom$ and $\Sigma^7(d^\prime)\times\cptwom$.

Finally, the first assertion of the corollary follows because
the preimage of $I_h(M)$ under the map
$d\to\Sigma^{4m+7}(d)$ is a subgroup of $\mathbb Z$ whose index
is equal to
the index of $I_h(M)\cap bP_{4m+8}$ in
$bP_{4m+8}$, and $d-d^\prime$ is in this
subgroup if and only if $\Sigma^7(d)\times\cptwom$ and
$\Sigma^7(d^\prime)\times\cptwom$ are orientation-preservingly diffeomorphic.
\end{proof}

\begin{rmk}
\label{rmk: non-orient diffeo}
By composing with the product of ${\bf id}(\cptwom)$
and an orientation-reversing diffeomorphism
$\Sigma^7(d^\prime)\to \Sigma^7(-d^\prime)$, we immediately conclude
that $\Sigma^7(d)\times \cptwom$,
$\Sigma^7(d^\prime)\times \cptwom$ are orientation-reversingly
diffeomorphic if and only if $\Sigma^{4m+7}(d+d^\prime)$
lies in in $I_h(S^7\times \cptwom)$.
\end{rmk}

\begin{rmk}
As mentioned in Section~\ref{sec: surgery basics} the
standard homeomorphism from
$\Sigma^7(1)\,\#\,(S^3\times\cptwo)$ to $S^3\times\cptwo$
is not homotopic to a diffeomorphism, but the domain and codomain
are diffeomorphic. The proof of
Corollary~\ref{cor: diffeo criterionfor sigma7(d)}
then shows that $S^3\times\cptwo$ has a homotopy self-equivalence
with trivial normal invariant which is not homotopic
to a diffeomorphism.
\end{rmk}

\begin{proof}[Proof of Theorem~\ref{intro-thm: CPq cancells}]
If $\Sigma^7(d)\times\cptwom$ and
$\Sigma^7(d^\prime)\times\cptwom$ are diffeomorphic, then
by Corollary~\ref{cor: diffeo criterionfor sigma7(d)}
and Remark~\ref{rmk: non-orient diffeo} at least one of
the homotopy spheres
$\Sigma^{4m+7}(d-d^\prime)$ or $\Sigma^{4m+7}(d+d^\prime)$
lies in the homotopy inertia group
$I_h(S^7\times \cpq)$, which
contradicts Taylor's result (Theorem~\ref{thm: taylor})
because $d-d^\prime$ and $d+d^\prime=d-d^\prime+2d^\prime$ are odd.
\end{proof}

\section{Factorization of self-equivalences of $S^7\times\cpq$}
\label{sec: factorization}

This section describes some fairly canonical
factorizations for homotopy self-equivalences
of $S^k\times\cpq$, where $k$ is odd and $q\geq 1$.
The factors are given by a diffeomorphism and two
homotopy self-equivalences, each arising from a map
of one factor into the space of homotopy self-equivalences
of the other factor; similar results for a product of
two spheres $S^n\times S^k$ appear in \cite{Lev69}.
Our factorization plays an
important role in the proof of the Dichotomy Property.

As in Section \ref{sec: sketch},
given a compact Hausdorff space $T$ we let
${\mathcal E}(T)$ denote the group of all homotopy
classes of homotopy self-equivalences of $T$, and
$E_1(T)$ will denote the path-component of the identity
in the topological monoid of all self-maps of $T$ with
the compact-open topology.  If $T$ is homeomorphic to
a finite connected cell complex, then $E_1(T)$
has the homotopy type of a
$CW$-complex~\cite[Theorem 3]{Mil59}, and since $E_1(T)$ is arcwise
connected, standard results on $H$-spaces (see
\cite{Ja60}) imply that there is an inverse-up-to-homotopy map
$\rho:E_1(T)\to E_1(T)$
such that the self-maps of
$E_1(T)$ given by $f\to\rho(f)\tinycirc f$ and
$f\to f\tinycirc \rho(f)$ are homotopic to the
constant map $f\to {\bf id}(T)$.

For the rest of this section $X$ and $Y$ will
denote path-connected finite cell
complexes with base points $x_0$ and $y_0$ respectively.
The latter define slice inclusions $j(X),j(Y):X,Y\to X\times Y$,
and the coordinate projections onto $X$, $Y$ are denoted by
$p(X)$ and $p(Y)$ respectively.
Let ${\mathcal E}^\prime(X\times Y)$ be the set of
all classes $[f]\in {\mathcal E}(X\times Y)$
of homotopy self-equivalences such that
$p(X)\tinycirc f\tinycirc j(X)\simeq {\bf id}(X)$ and
$p(Y)\tinycirc f\tinycirc j(Y)\simeq {\bf id}(Y)$.

\begin{prop}
\label{prop: product cpq times sk}
Let $f:S^k\times\cpq\to S^k\times\cpq$ be
a homotopy self-equivalence, where $q\geq 1$ and $k$ is odd.
Then the following hold.

$(i)$
There is a diffeomorphism
$h:S^k\times\cpq\to S^k\times\cpq$ such that $f$ and $h$
induce the same automorphism of $H^*(S^k\times \cpq;\mathbb Z)$.

$(ii)$ If $f$ induces the
identity on $H^*(S^k\times \cpq;\mathbb Z)$, then
$[f]\in {\mathcal E}^\prime (S^k\times \cpq)$.

$(iii)$ ${\mathcal E}^\prime (S^k\times \cpq)$
equals the kernel of the
action of ${\mathcal E}(S^k\times \cpq)$ on cohomology.
\end{prop}

\begin{proof} $(i)$
The ring $H^*(S^k\times \cpq;\mathbb Z)$ is generated
by the classes of dimensions $2$ and $k$, which
also generate cohomology $2$nd and $k^{\rm th}$
cohomology groups, so the induced cohomology
automorphism $f^*$ of $H^*(S^k\times \cpq;\mathbb Z)$
is completely determined by its behavior on the
generators in dimensions $2$ and $k$, and it must be
multiplication by $\pm\,1$ in each case. If
$\chi$ is the conjugation involution on $\cpq$,
then ${\bf id}(S^k)\times\chi$ is multiplication
by $+1$ on the $k$-dimensional generator and
multiplication by $-1$ on the 2-dimensional generator,
while if $\varphi$ is reflection about a standard
$(k-1)$-sphere in $S^k$ then
$\varphi\times {\bf id}(\cpq)$ is multiplication
by $-1$ on the $k$-dimensional generator and
multiplication by $+1$ on the 2-dimensional generator.
Finally, the composition of these maps is multiplication by
$-1$ on both generators.  Thus every automorphism of
$H^*(S^k\times \cpq;\mathbb Z)$ is in fact induced by
a diffeomorphism.

{\it Proof of $(ii)$. }
The composite self-map of $S^k$ induces the identity
in cohomology and hence is homotopic to the identity;
similarly, the composite self-map of $\cpq$ also
induces the identity in cohomology, and a simple
obstruction-theoretic argument shows that this
composite must also be homotopic to the identity: indeed,
the restrictions to $\cpone$ are homotopic for degree
reasons, and the obstructions to
extending this to a homotopy of the original maps
lie in the groups
$H^{2j}\left(\cpq,\cpone;\pi_{2j}(\cpq)\,\right)$,
which are all trivial.

{\it Proof of $(iii)$. }
This follows immediately from $(ii)$ and the proof of $(i)$.
\end{proof}

Under the assumptions of Proposition~\ref{prop: product cpq times sk}
the set ${\mathcal E}^\prime (S^k\times \cpq)$ is a subgroup of
${\mathcal E}(S^k\times \cpq)$. 
In general, ${\mathcal E}^\prime(X\times Y)$ need not
be a subgroup of ${ \mathcal E}(X\times Y)$, but
regardless of whether or not this is true there are two important
subsets of ${ \mathcal E}'(X\times Y)$ which are subgroups;
each arises from a map
of one factor into the space of homotopy self-equivalences
of the other factor.
One of these subgroups is the image of a homomorphism
$\a_X\co [X,E_1(Y)]\to {\mathcal E}'(X\times Y)$
defined as follows:  Given a class
in $[X,E_1(Y)]$, choose a base point preserving representative
$g:X\to E_1(Y)$, where the base point of $E_1(Y)$ is the identity map;
then $g$ is adjoint to a continuous map
$g_{\#}:X\times Y\to Y$ whose restriction to $\{x_0\}\times Y$
is the identity.  Furthermore, if $g'$ is homotopic to $g$ then
$g_{\#}'$ is homotopic to $g_{\#}$
(this uses the fact that the adjoint isomorphism of function spaces
is a homeomorphism
$\mathfrak F\left( A,\,\mathfrak F (B, C)\,\right) \cong
\mathfrak F\left( A\times B, C\right)$
where ${\mathfrak F}$ denotes the continuous function space with
the compact open topology and $A,B,C$ are compact Hausdorff
spaces).

The existence of an inverse-up-to-homotopy self-map of $E_1(Y)$
implies the map $G$ that satisfies $p(Y)\tinycirc G = g_{\#}$
and $p(X)\tinycirc G = p(X)$ is a homotopy self-equivalence of $X\times Y$;
let $\alpha_X([g])$ denote the homotopy class of $G$.
Note that $\alpha_X([g])$ lies in
${ \mathcal E}'(X\times Y)$ because the
assumption that $g$ is base point preserving implies that
\[
p(Y)\tinycirc G\tinycirc j(Y) = {g_{\#}}\ \tinycirc j(Y)={\bf id}(Y)\qquad
p(X)\tinycirc G\tinycirc j(X) = p(X)\tinycirc
j(X)={\bf id}(X).
\]
Basic properties of adjoints imply that
$\alpha_X$ is a well-defined homomorphism into ${\mathcal E}(X\times Y)$
whose image lies in ${\mathcal E}^\prime(X\times Y)$.
Interchanging the roles of $X$ and $Y$ yields
a second homomorphism $\alpha_Y:[Y,E_1(X)]\to {\mathcal E}(X\times Y)$
with image in ${\mathcal E}^\prime(X\times Y)$.

Special cases of the following proposition are
in the literature ({\it e.g.\/}, in~\cite[2.5]{Lev69}).

\begin{prop}
\label{prop: factorization}
If ${\mathcal E}^\prime(X\times Y)$ is a subgroup of
${ \mathcal E}(X\times Y)$, then
every element in
${ \mathcal E}'(X\times Y)$ can be decomposed
as the product $\alpha_Y(v)\alpha_X(u)$ for
some $u\in [X,E_1(Y)]$ and $v\in [Y,E_1(X)]$.
\end{prop}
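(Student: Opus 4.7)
The plan is to construct $u$ and $v$ directly from a suitable representative of $f$, and then to verify $f\simeq\alpha_Y([v])\tinycirc\alpha_X([u])$ up to homotopy. Since $X\vee Y\hookrightarrow X\times Y$ is a cofibration for finite CW inputs, I first apply the homotopy extension property to replace $f$ within its homotopy class by a representative for which $p(X)\tinycirc f\tinycirc j(X)={\bf id}(X)$ and $p(Y)\tinycirc f\tinycirc j(Y)={\bf id}(Y)$ hold strictly, in particular $f(x_0,y_0)=(x_0,y_0)$. Then set $f_2:=p(Y)\tinycirc f$ and define $u\co X\to\mathfrak{F}(Y,Y)$ by $u(x)(y):=f_2(x,y)$; continuity comes from the adjoint isomorphism of function spaces, and the strictification gives $u(x_0)={\bf id}(Y)$, so $u$ is pointed and, by path-connectedness of $X$, takes values in $E_1(Y)$. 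This yields $[u]\in[X,E_1(Y)]$ with $\alpha_X([u])$ represented by $(x,y)\mapsto(x,u(x)(y))$.

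Put $g:=f\tinycirc\alpha_X([u])^{-1}$. Using the formula $\alpha_X([u])^{-1}(x,y)\simeq(x,u(x)^{-1}(y))$, a direct computation yields $p(Y)\tinycirc g(x,y)\simeq u(x)(u(x)^{-1}(y))\simeq y$, and a further HEP strictification makes $p(Y)\tinycirc g=p(Y)$ exact. Hence $g(x,y)=(G(x,y),y)$ where $G:=p(X)\tinycirc g$, and I define $v\co Y\to\mathfrak{F}(X,X)$ by $v(y)(x):=G(x,y)$. Because $g$ is a strictly fiber-preserving self-homotopy equivalence of the trivial fibration $X\times Y\to Y$ with path-connected base, $g$ is a fiber homotopy equivalence, so each $v(y)$ is a self-equivalence of $X$; by continuity, $v(Y)$ lies in a single path component of $\mathfrak{F}(X,X)$.

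The main obstacle is to verify that this component is $E_1(X)$, equivalently that $v(y_0)\simeq{\bf id}(X)$. Unwinding, $v(y_0)(x)\simeq f_1(x,\psi(x))$ where $f_1:=p(X)\tinycirc f$ and $\psi(x):=u(x)^{-1}(y_0)$ satisfies $\psi(x_0)=y_0$. I would analyze this cohomologically: the hypothesis $f\in\mathcal{E}'$, the parallel property for $\alpha_X([u])\in\mathcal{E}'$, the K\"unneth decomposition, and the relation $j(X)^{\ast}\tinycirc p(X)^{\ast}={\bf id}$ together force $g^{\ast}p(X)^{\ast}\sigma\equiv p(X)^{\ast}\sigma\pmod{H^{\ast}(X)\otimes H^{>0}(Y)}$; applying $j(X)^{\ast}$ kills the correction, so $v(y_0)^{\ast}={\bf id}$ on $H^{\ast}(X)$, and a symmetric computation handles classes pulled back from $Y$. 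In the principal case $X=S^k$ ($k$ odd) and $Y=\cpq$ relevant to the paper, self-equivalences of $S^k$ are detected by degree, so this cohomological conclusion immediately gives $v(y_0)\simeq{\bf id}(X)$. Hence $v$ factors through $E_1(X)$; after a final HEP adjustment making $v$ pointed, $\alpha_Y([v])\simeq g$, and therefore $f\simeq g\tinycirc\alpha_X([u])\simeq\alpha_Y([v])\tinycirc\alpha_X([u])$, completing the decomposition.
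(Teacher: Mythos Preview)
Your approach mirrors the paper's almost exactly: take $u$ as the adjoint of $p(Y)\circ f$, set $g=f\circ\alpha_X(u)^{-1}$, straighten so that $p(Y)\circ g=p(Y)$, and read off $v$ from the $X$-coordinate of $g$. The paper passes over your ``main obstacle'' --- why the resulting $V$ lands in $E_1(X)$ --- in a single clause (with what looks like a typo), simply citing the $\mathcal{E}'$ hypothesis and path-connectedness.

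Your cohomological treatment of that obstacle is not complete for general $X,Y$. The claim $g^*p(X)^*\sigma\equiv p(X)^*\sigma\pmod{H^*(X)\otimes H^{>0}(Y)}$ is literally the assertion $v(y_0)^*=\mathbf{id}$ that you want; deducing it from $f,\alpha_X(u)\in\mathcal{E}'$ would need $\mathcal{E}'$ to be closed under composition and inverses, which the paper explicitly says need not hold. Concretely, $\alpha_X(u)^{-1}\circ j(X)$ is the section $x\mapsto(x,u(x)^{-1}(y_0))$, which agrees with $j(X)$ only when $p(Y)\circ f\circ j(X)\co X\to Y$ is nullhomotopic --- not part of the $\mathcal{E}'$ hypothesis. (Also, the sentence ``a symmetric computation handles classes pulled back from $Y$'' is a slip: $v(y_0)$ is a self-map of $X$.)

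That said, for the only case the paper uses, $X=S^k$ and $Y=\cpq$, the paper observes just before the proposition that $\mathcal{E}'(S^k\times\cpq)$ \emph{equals} the kernel of the action on cohomology, hence is a subgroup. Then $g=f\circ\alpha_X(u)^{-1}$ acts trivially on $H^*$, so $v(y_0)^*=j(X)^*g^*p(X)^*=\mathbf{id}$, and since self-equivalences of $S^k$ are detected by degree this gives $v(y_0)\simeq\mathbf{id}(S^k)$ --- exactly your conclusion. So your argument and the paper's are on equal footing: both are complete in the case actually needed, and both leave the fully general statement with the same step unjustified.
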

\begin{proof}
Suppose that $f$ represents an element of
${ \mathcal E}^\prime (X\times Y)$.
First, note that after changing $f$ within its homotopy
class we may assume $p(Y)\tinycirc f\vert_{\{x_0\}\times Y}$
equals $p(Y)\vert_{\{x_0\}\times Y}$.
To see this, use the homotopy lifting property for
$p_Y\co X\times Y\to Y$ to lift the homotopy between
$p(Y)\tinycirc f\vert_{\{x_0\}\times Y}$
and $p(Y)\vert_{\{x_0\}\times Y}$
to a homotopy joining $f\vert_{\{x_0\}\times Y}$
to the inclusion of $\{x_0\}\times Y$,
and then apply the homotopy extension property for
${\{x_0\}\times Y}$ to extend this homotopy
to a homotopy between $f$ and a map that
sends $(x_0, y)$ to $y$.

Let $U\co X\to\mathfrak F(Y, Y)$ be the adjoint of $p(Y)\tinycirc f$.
Since $U(x_0)=\mathbf{id}(Y)$ and $X$ is path-connected,
the image of $U$ lies in $E_1(Y)$, which lets us think of
$U$ as a map $X\to E_1(Y)$. Hence $U$ defines
a homotopy self-equivalence $g$ of $X\times Y$ given by
$g(x,y)=(x, U(x)(y))$; note that $p(Y)\tinycirc f=p(Y)\tinycirc g$.
A homotopy inverse to $g$ is given by
$g^\prime(x,y):=(x, \rho(U(x))(y))$, where $\rho$ is the
inverse-up-to-homotopy mentioned above.
This yields a homotopy joining
$p(Y)\tinycirc f\tinycirc g^\prime=
p(Y)\tinycirc g\tinycirc g^\prime$
with $p(Y)$, which lifts to a homotopy of
$f\tinycirc g^\prime$
and a map $(x,y)\to (V(y)(x), y)$
for some $V\co Y\to \mathfrak F(X,X)$.
In fact, the image of $V$ lies in $E_1(X)$ because by assumption
${\mathcal E}^\prime(X\times Y)$ is a subgroup,
and it clearly contains $f$ and $g^\prime$.
Since $f$ is homotopic to $f\tinycirc g^\prime\tinycirc g$,
the homotopy class of $f$ is equal to $\alpha_Y(v)\alpha_X(u)$
where $u$, $v$ are the homotopy classes of $U$, $V$, respectively.
\end{proof}

\begin{rmk}
Since $X\times Y$ and $Y\times X$ are canonically homeomorphic,
Proposition~\ref{prop: factorization} implies that if
${\mathcal E}^\prime(X\times Y)$ is a subgroup, then
the classes in ${\mathcal E}'(X\times Y)$ can be
also decomposed as $\alpha_X(u')\alpha_Y(v')$ for some
$u'$ and $v'$.
\end{rmk}

\section{A spectral sequence converging to $\pi_*(E_1(\cpq))$}
\label{sec: spectral seq}

By Propositions~\ref{prop: product cpq times sk}--\ref{prop: factorization}, 
if a homotopy self-equivalence of $S^7\times \cpq$ induces the identity
map on cohomology, then its homotopy class can be factored
as the composite of an element in the image of $\pi_7(E_1(\cpq))$ and
an element in the image of $[\cpq, S^7]$. This section
recalls and proves some results on $\pi_7(E_1(\cpq))$ using the
methods and results of \cite{Sch73}.

Let $F_{S^1}(\mathbb C^{q+1})$ be the space all
self-maps of $S^{2q+1}$, viewed as the unit sphere in $\mathbb C^{q+1}$,
which are $S^1$-equivariant with respect to the free linear action by
complex scalar multiplication; as usual, take the restricted compact
open topology on this equivariant function space and choose the
identity to be the base point.  The connectivity statement in
\cite[Theorem 5.7]{BecSch}
implies that $F_{S^1}(\mathbb C^{q+1})$ is connected for all $q\geq 1$.

Since the standard unitary group
action on $S^{2q+1}$ commutes with the free $S^1$-action by complex
scalar multiplication, it follows that
$U_{q+1}$ is a (compact) subgroup
of the topological monoid $F_{S^1}(\mathbb C^{q+1})$.  Furthermore,
since the quotient of the free linear $S^1$-action on $S^{2q+1}$ is the complex
projective space $\cpq$, there is an obvious
continuous homomorphism $\rho$ from $F_{S^1}(\mathbb C^{q+1})$ to
$E_1(\cpq)$ given by passage to quotients.
Results of James~\cite[Theorems 2.1--2.2]{Ja63} imply that
$\rho$ is a Serre fibration with the diagonally embedded
$S^1\subset U_{q+1}$ as a homotopy
fiber, and hence $\rho$ induces $\pi_k$-isomorphisms for all $k\ge 3$.
The same is also true for $k=2$ by
the proof of~\cite[Theorem 11.1]{BecSch}.

One important advantage of $F_{S^1}(\mathbb C^{q+1})$
over $E_1(\cpq)$ is the existence of the {\sl stabilization
homomorphism\/}
\[
s_{q+1}\co F_{S^1}(\mathbb C^{q+1})\to F_{S^1}(\mathbb C^{q+2})
\]
induced by taking the equivariant join of an equivariant self-map
of $S^{2q+1}$ with the identity on $S^1$~\cite[page 2]{BecSch}.
We denote the limit of these stabilizations by $F_{S^1}$ and the
limit of the iterated composites by $\sigma_{q+1}\co F_{S^1}(\mathbb C^{q+1})\to F_{S^1}$.

One of the most basic motivations for studying stabilization maps is
the {\sl Freudenthal Suspension Theorem\/}, which implies that
if $X$ is a CW complex then the
unreduced suspension homomorphisms
$$\pi_{k+r}(\Sigma^i X)~~\longrightarrow~~\pi_{k+i+1}(\Sigma^{i+1} X)$$
become isomorphisms for $i\geq k+2$ and epimorphisms for $i=k+1$
\cite[Theorem VII.7.13]{Wh}. For such values
of $i$ the group is the $k$-dimensional {\it stable homotopy group of $X$\/},
which we denote by $\pi_k^{\bf S}(X)$.

It is well known that the stabilization homomorphisms for the classical
group families $O_{q+1}\to O$ and $U_{q+1}\to U$ are respectively
$q$- and $(2q+2)$-connected.
The methods and results of \cite{Sch73} yield analogous information
about the stable range for the spaces $F_{S^1}(\mathbb C^{q+1})$
and $F_{S^1}$
(see the discussion on the top half of~\cite[p. 66]{Sch73}).
We shall use the following refinements of these results:

\begin{prop}\label{prop: stable range}
For all $q\geq 1$ the map $\sigma_{q+1}$ is $(2q+1)$-connected.
Furthermore, the kernel of the homomorphism
$$\sigma_{q+1\,*}\co \pi_{2q+1}\left(F_{S^1}(\mathbb C^{q+1})\,\right)~\longrightarrow~
\pi_{2q+1}\left(F_{S^1}\right)$$
is isomorphic to a quotient of the kernel for the stable suspension map from
$\pi_{4q+1}(S^{2q+1})$ to $\pi_{2q}^{\bf S}$.
\end{prop}

The Freudenthal Suspension Theorem implies that the
stabilization map from
$\pi_{4q+r}(S^{2q+r})$ to $\pi_{2q}^{\bf S}$ is an
isomorphism for $r\geq 2$, so the kernel of the stabilization map from
$\pi_{4q+1}(S^{2q+1})$ to $\pi_{2q}^{\bf S}$ is the image of the
Whitehead Product $[\iota_{2q+1},\iota_{2q+1}]$ by the exactness of
the EHP sequence \cite[Theorem XII(2.4)]{Wh}. This class has order at
most 2 by the anticommutativity of the Whitehead product, and thus the
kernel of the stabilization homomorphism from
$\pi_{2q+1}\left(F_{S^1}(\mathbb C^{q+1})\,\right)$ to
$\pi_{2q+1}\left(F_{S^1}\right)$ also has order at most 2.  In the cases
of interest for this paper, a slightly stronger conclusion holds.

\begin{cor}\label{cor: 7d stability}
The stabilization map from
$\pi_7\left(F_{S^1}(\mathbb C^{q+1})\,\right)$ to
$\pi_{7}\left(F_{S^1}\right)$ is an isomorphism when $q\geq 3$.
\end{cor}

\begin{proof}[Proof of Corollary~\ref{cor: 7d stability}]
When $q\geq 4$ this follows from Proposition~\ref{prop: stable range},
so it remains to verify the result when $q=3$, so that $2q+1=7$.
Since $S^7$ is an $H$-space ({\it e.g.\/}, by multiplication of unit
Cayley numbers), the Whitehead product $[\iota_7,\iota_7]\in
\pi_{13}(S^7)$ is trivial (compare \cite[Exercise 37, p. 392]{Hat}), and
therefore Proposition~\ref{prop: stable range}
and the preceding discussion imply that the stabilization map is both
one to one and onto.
\end{proof}

\begin{proof}[Proof of Proposition~\ref{prop: stable range}]
The argument on the top half of \cite[p. 66]{Sch73} proves that the
stabilization is $2q$-connected using the Zeeman version of the
Comparison Theorem
for spectral sequence mappings \cite[Theorem 1]{Zee}.  We shall first
show that a simple modification of that argument allows us to raise
the connectivity range by one dimension.
For the sake of concise notation we shall denote the spectral sequence for
$\pi_*\left(\,F_{S^1}(\mathbb C^{q+1})\,\right)$ by
$E^r_{s,t}\left(G\mathbb C^{q+1}\right)$ and the spectral sequence
for the stable analog $\pi_*\left(F_{S^1}\right)$ by
$E^r_{s,t}\left(G\mathbb C^{\infty}\right)$. The stabilization map
of spectral sequences will be denoted by $f^r_{s,t}$.

Zeeman's setting requires that the $E^2$-terms of a spectral
sequence are given by a short exact sequence involving tensor and torsion
products of $E^2_{0,*}$ and
$E^2_{0,*}$ similar to the Universal Coefficient Theorem, and
spectral sequence mappings are required to be presented similarly.
If the maps $g^2_{s,0}$ are isomorphisms for $s\leq P$ and the
maps $g^2_{0,t}$ are isomorphisms for $t\leq Q$, then there is an
isomorphism at the $E^\infty$ level through total degree $N=
{\rm min}(P-1,Q)$, and if $g^2_{0,N+1}$ is onto then the $E^\infty$
map in total degree $N+1$ is also onto.

The spectral sequences of interest in this proposition do not quite
fit into these hypotheses, but if we change the bigradings so that the
new term in bidegree $(s,t)$ is the old term in bidegree $(s+1,t)$,
then the conditions in Zeeman's paper are satisfied. Denote these
new spectral sequences by $E^r_{s,t}\left(\Omega G\mathbb C^{q+1}\right)$ and
$E^r_{s,t}\left(\Omega G\mathbb C^{\infty}\right)$, and let
$g^r_{s,t}=\Omega f^r_{s,t}$ denote the mapping of spectral sequences
corresponding to $f^r$ with this shift in dimensions.

For the modified spectral sequences in this proposition, we have
$P=2q+1$ and $Q=2q-1$, so that $N=2q$.  This implies that
the map $g^\infty$ is an isomorphism through total degree $2q-1$.
Furthermore, the Freudenthal Suspension Theorem implies that
the map $g^2_{0,2q}$ is onto, so Zeeman's result implies that
the map $g^\infty$ is onto in total degree $2q$.  Now the degrees
in the original and modified spectral sequences differ by 1,
so this means that the original spectral sequence mappings
$f^\infty$ are isomorphisms through total degree $2q$ and onto in
total degree $2q+1$.

The assertion about the kernel of $f^\infty$ in
total degree $2q+1$ follows because the methods of
\cite[proof of Theorem 2]{Zee} also show
that this kernel is contained in $E^\infty_{1,2q}$, so that it must be
isomorphic to a quotient of the kernel for the map
$$f^2_{1,2q}:
E^2_{1,2q}\left(G\mathbb C^{q+1}\right)\cong \pi_{4q+1}(S^{2q+1})~~
\longrightarrow~~E^2_{1,2q}\left(G\mathbb C^{\infty}\right)\cong
\pi_{2q}^{\bf S}$$
which according to \cite{Sch73} is given by stable suspension.
\end{proof}

The stabilization homomorphisms for the increasing sequences
$\left\{\,U_{q+1}\,\right\}$ and
$\left\{\,F_{S^1}(\mathbb C^{q+1})\,\right\}$
commute with the inclusions
$U_{q+1}\to F_{S^1}(\mathbb C^{q+1})$; and therefore define a continuous
map $U\to F_{S^1}$, where as usual $U$ denotes the direct limit of the
groups $U_{q+1}$.  The results of~\cite[Theorem 11.1]{BecSch}
(see also \cite{Seg70}) imply that
the induced maps of homotopy groups $\pi_*(U)\to\pi_*(F_{S^1})$ are split
injective and the cokernels are finite.

The main results of~\cite{BecSch} also yield another canonical
direct summand of $\pi_*(F_{S^1})$ as follows:  By~\cite[Theorem 6.6]{BecSch}
the homotopy groups $\pi_k(F_{S^1})$
are isomorphic to the stable homotopy groups
$\pi_k^{\bf S}(\Sigma\mathbb{CP}^\infty_+)$, where
$\Sigma\mathbb{CP}^\infty_+$ is the suspension
of the disjoint union of $\mathbb{CP}^\infty$ and a point.
Since $\Sigma\mathbb{CP}^\infty_+$ is equal to the wedge sum
$\Sigma\mathbb{CP}^\infty\vee S^1$, the group
$\pi_k^{\bf S}(\Sigma\mathbb{CP}^\infty_+)$
contains $\pi_k^{\bf S}(S^1)=\pi_{k-1}^{\bf S}$.  In
particular, this yields a subgroup of
$\pi_7^{\bf S}(\Sigma\mathbb{CP}^\infty_+)$
which is isomorphic to $\pi_6^{\bf S}\cong \mathbb Z_2$,
and the following result  shows that  $\pi_7(F_{S^1})$
is generated by this summand and the one described in the
previous paragraph:

\begin{lem}\label{lem: stable pi7}
The stable group $\pi_7(F_{S^1})\cong
\pi_7^{\bf S}(\Sigma\mathbb{CP}^\infty_+)$ is isomorphic to
$\mathbb Z\oplus \mathbb Z_2$, with the infinite cyclic
summand given by the image of $\pi_7(U)\cong\mathbb Z$ and
the $\mathbb Z_2$ summand given as in the preceding paragraph.
\end{lem}

\begin{proof}
It is only necessary to check that
$\pi_7^{\bf S}(\Sigma\mathbb{CP}^\infty)\cong
\pi_6^{\bf S}(\mathbb{CP}^\infty)$ is infinite cyclic,
for if this is the case then \cite[Theorem 11.1]{BecSch} implies
that generators for the image of this group in  $\pi_7(F_{S^1})\cong
\pi_7^{\bf S}(\Sigma\mathbb{CP}^\infty_+)$ and the
image of $\pi_7(U)$ agree up to sign and an element of finite order.
One way of proving that
$\pi_6^{\bf S}(\mathbb{CP}^\infty)\cong\mathbb Z$ is to use
\cite[Proposition 8.7]{Mukai} and the 7-connectedness of the inclusion
$\mathbb{CP}^3\subset \mathbb{CP}^\infty$.
\end{proof}

In contrast to Corollary~~\ref{cor: 7d stability}, the group
$\pi_7(F_{S^1}(\mathbb C^3))$ is unstable, and the
next goal of this section is to prove the following proposition:

\begin{prop}\label{prop: unstable homotopy group}
The map $s_{3\ast}\co\pi_7(F_{S^1}(\mathbb C^3))\to
\pi_7(F_{S^1}(\mathbb C^4))\cong\pi_7(F_{S^1})$
has kernel of order at most $2$, and
has image isomorphic to $\mathbb Z_2$.
If the kernel is nontrivial, then
$\pi_7(F_{S^1}(\mathbb C^3))\cong\mathbb Z_2\oplus\mathbb Z_2$.
\end{prop}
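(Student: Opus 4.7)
The plan is to use the spectral sequence of \cite{Sch73} that converges to $\pi_*(F_{S^1}(\mathbb{C}^{q+1}))$, together with naturality under the cell-inclusion $\cptwo\hookrightarrow\mathbb{CP}^3$ that induces the stabilization $s_{3}$. This spectral sequence filters $\pi_7(F_{S^1}(\mathbb{C}^{q+1}))$ by the cellular filtration of $\cpq$; the $E_1$-page is assembled from unstable homotopy groups of $S^{2q+1}$ indexed by the cells of $\cpq$, and the $q\to\infty$ stabilization recovers $\pi_7(F_{S^1})\cong\pi_7(U)\oplus\pi_7^{\mathbf S}(S^1)=\mathbb Z\oplus\mathbb Z_2$.

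First I would show that $\pi_7(F_{S^1}(\mathbb{C}^3))$ is a finite abelian $2$-group. The $E_1$-contributions in total degree $7$ for $q=2$ arise from the cells of $\cptwo$ in dimensions $0,2,4$ and are subquotients of $\pi_{7}(S^{5})$, $\pi_{9}(S^{5})$, $\pi_{11}(S^{5})$, each of which is $\mathbb Z_2$ by the standard tables of unstable homotopy of spheres. This finiteness forces $\operatorname{im}(s_{3\ast})$ to lie in the torsion summand $\pi_7^{\mathbf S}(S^1)=\mathbb Z_2$ of $\pi_7(F_{S^1})$, so the $\pi_7(U)=\mathbb Z$-summand cannot be in the image. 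Next I would exhibit an element of $\pi_7(F_{S^1}(\mathbb{C}^3))$ that stabilizes to the nonzero element of $\pi_7^{\mathbf S}(S^1)$: the natural candidate is the class in filtration $0$ coming from $\eta^2\in\pi_7(S^5)$. It survives to $E_\infty$ because it sits in the lowest filtration (so cannot support a nonzero differential into a lower filtration) and its stabilization, which is the stable $\eta^2$, is nonzero and hence cannot be hit by any differential. Thus $\operatorname{im}(s_{3\ast})=\mathbb Z_2$.

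To bound $\ker(s_{3\ast})$, I would compare the spectral sequence for $q=2$ with the (large-$q$) stable spectral sequence filtration by filtration, using the naturality of the construction in the inclusion $\cptwo\hookrightarrow\mathbb{CP}^3$. The filtration-$0$ and filtration-$2$ contributions in degree $7$ use only the attaching map of the $4$-cell of $\cptwo$, which agrees with that of $\mathbb{CP}^3$, so these summands stabilize faithfully and cannot sit in the kernel. Any kernel element must therefore be supported on the top-cell filtration of $\cptwo$, which is a subquotient of $\pi_{11}(S^5)=\mathbb Z_2$; after accounting for outgoing differentials emanating from this column (which encode Whitehead-product and Hopf-invariant data of the attaching map $\eta\co S^3\to \cpone$), this subquotient has order at most $2$. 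This yields $|\ker(s_{3\ast})|\le 2$.

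Finally, if the kernel is nontrivial then $\pi_7(F_{S^1}(\mathbb{C}^3))$ has order $4$ and fits in a short exact sequence $0\to\mathbb Z_2\to\pi_7(F_{S^1}(\mathbb{C}^3))\to\mathbb Z_2\to 0$. To rule out the non-split extension $\mathbb Z_4$ I would invoke the Becker--Schultz splitting of \cite{BecSch}, under which $\pi_7^{\mathbf S}(S^1)$ is a natural direct summand of $\pi_7^{\mathbf S}(S\mathbb{CP}^\infty_+)$; composing with the natural map $\pi_7(F_{S^1}(\mathbb{C}^3))\to \pi_7(F_{S^1})$ yields a retraction onto the image $\mathbb Z_2$, splitting the sequence and giving $\pi_7(F_{S^1}(\mathbb{C}^3))\cong\mathbb Z_2\oplus\mathbb Z_2$. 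The hardest step is the precise analysis of the unstable $d_r$-differentials in the $q=2$ spectral sequence, in particular those interacting with the top $4$-cell column; it is these differentials, expressed through Whitehead products with $\eta$ and Hopf invariants, that decide whether the kernel is trivial or $\mathbb Z_2$, and pinning down exactly one nontrivial possibility is the delicate computational core of the argument.
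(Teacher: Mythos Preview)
Your overall framework---using the spectral sequence of \cite{Sch73} and comparing with the stable range via naturality---matches the paper's, but the execution has the filtration running the wrong way, and this propagates into several concrete errors.

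In the spectral sequence $E^2_{s,t}(G\mathbb C^{q+1})=H_{s-1}(\cpq;\pi_{t+2q+1}(S^{2q+1}))$ for $q=2$, the three nonzero terms on the line $s+t=7$ are
\[
E^2_{1,6}=\pi_{11}(S^5),\qquad E^2_{3,4}=\pi_9(S^5),\qquad E^2_{5,2}=\pi_7(S^5),
\]
so the \emph{lowest} filtration carries $\pi_{11}(S^5)$ and the \emph{highest} carries $\pi_7(S^5)$, opposite to what you wrote. The class $\eta^2\in\pi_7(S^5)$ sits at $E^2_{5,2}$, where it is the \emph{source} of a differential: the paper computes $d^2_{5,2}(\eta^2)=\eta^3\neq 0$ in $\pi_8(S^5)$, so $E^\infty_{5,2}=0$ and $\eta^2$ does not survive. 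The class that actually survives and hits the generator of the $\mathbb Z_2$ summand in $\pi_7(F_{S^1})$ lives at $E^\infty_{1,6}=\pi_{11}(S^5)\cong\mathbb Z_2$; under stabilization this is $\nu^2\in\pi_6^{\mathbf S}=\pi_7^{\mathbf S}(S^1)$, not ``the stable $\eta^2$'' (which is a class in $\pi_2^{\mathbf S}$, so the degrees in your sentence do not match). Consequently the potential kernel is the subquotient of $\pi_9(S^5)$ at $E^\infty_{3,4}$, not of $\pi_{11}(S^5)$ as you asserted; the bound $|\ker|\le 2$ then comes simply from $E^2_{3,4}(G\mathbb C^4)=0$ together with the fact that $E^\infty_{1,6}$ injects under stabilization.

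Your argument for ruling out $\mathbb Z_4$ does not work as stated: composing $s_{3\ast}$ with the Becker--Schultz projection $\pi_7(F_{S^1})\to\mathbb Z_2$ just reproduces the quotient map $\pi_7(F_{S^1}(\mathbb C^3))\twoheadrightarrow\operatorname{im}(s_{3\ast})$ and gives no splitting. What is needed is a \emph{section}, i.e.\ an order-$2$ element of $\pi_7(F_{S^1}(\mathbb C^3))$ not in the kernel. The paper obtains this directly from the spectral sequence: $E^\infty_{1,6}\cong\mathbb Z_2$ is a subgroup of $\pi_7(F_{S^1}(\mathbb C^3))$ that maps isomorphically onto the image, and this immediately excludes $\mathbb Z_4$. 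Once you correct the filtration direction and supply the $d^2_{5,2}$ computation, the rest of your outline aligns with the paper's proof.
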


For the sake of concise notation we shall sometimes
denote $F_{S^1}(\mathbb C^{q+1})$ by $G\mathbb C^{q+1}$.

While Proposition~\ref{prop: unstable homotopy group}
does not compute $\pi_7(F_{S^1}(\mathbb C^3))$, its
conclusion together with Proposition~\ref{prop: eta composition}
will suffice for the purposes of this paper.

General tools for studying homotopy groups of
$F_{S^1}(\mathbb C^{q+1})$ are spectral sequences
developed in~\cite{Sch73} and~\cite{BecSch}.
To avoid additional digressions, we only use the spectral
sequences described in~\cite{Sch73}
and the relations among them.
These spectral sequences arise from the long exact homotopy
sequences associated to standard filtrations of function
spaces and classical Lie groups.
In all cases, the terms $E^1_{s,t}$ are the relative homotopy groups
$\pi_{s+t}({\rm Filt}^{(s)}, {\rm Filt}^{(s-1)})$, which
turn out to be canonically isomorphic to certain
homotopy groups of spheres.

In the case of $G\mathbb C^{q+1}$, the filtration is given by the
submonoids ${\rm Filt}^{(2p-1)}= {\rm Filt}^{(2p)}$ of
functions that restrict to inclusions on the standard subspheres
$S^{2q-2p+1}\subset S^{2q+1}$, where $0\leq p\leq q$;
by convention, ${\rm Filt}^{(2q+1)}$ is the entire space.
The associated spectral sequence $\{E^r_{s,t}(G\mathbb C^{q+1})\}$
is the homotopy spectral sequence that we have been studying in this
section.  By~\cite[Theorem 2.1]{Sch73} the spectral sequence converges to
$\pi_{s+t}(G\mathbb C^{q+1})$, and
$$E^2_{s,t}(G\mathbb C^{q+1})~~=~~H_{s-1}(\cpq,\pi_{t+2q+1}(S^{2q+1}))~.$$
To avoid confusion note that the $E^2$ term is described
in~\cite[Theorem 2.1]{Sch73} in terms of cohomology of $\cpq$,
and to obtain the above $E^2$ term we use
Poincar\'e duality for $\cpq$ and shift the bidegree from
$(s,t)$ to $(s+2q+1, t-2q-1)$ as in~\cite[Theorem 3.2]{Sch73}.

For the group $U_{q+1}$,
a similar filtration is given by the standardly embedded
unitary groups $U_{p}$, where $0\leq p\leq q$.
The associated spectral sequence
$\{E^r_{s,t}(U_{q+1})\}$ converges to
$\pi_{s+t}(U_{q+1})$, and
$$E^2_{s,t}(U_{q+1})~~=~~H_{s-1}(\cpq,\pi_{s+t}(S^{s}))$$
(see the discussion right before~\cite[Theorem 5.2]{Sch73}).
The inclusion $U_{q+1}\to G\mathbb C^{q+1}$
is compatible with these
filtrations, and~\cite[Theorem 5.2]{Sch73} yields
a canonical mapping of spectral sequences
which converges to
the map of homotopy groups induced by the inclusion
$U_{q+1}\to G\mathbb C^{q+1}$, and is given on the $E^2$ term
by the coefficient homomorphism induced by the
$(2q+1-s)$-fold suspension of homotopy groups of spheres.

Similarly, the stabilization map $s_{q+1}$
induces the map between the spectral sequences
for $G\mathbb C^{q+1}$ and $G\mathbb C^{q+2}$, which
by~\cite[Theorem 3.2]{Sch73} converges to
a map of homotopy groups induced by $s_{q+1}$, and which
corresponds on the $E^2$ term to
the homomorphism induced by the inclusion
$\cpq\to\cpqplusone$ and the double suspension on
coefficients.

Note that, as usual with homology spectral sequences,
the differentials on the $E^r$-term map
$E^r_{s,t}$ to $E^r_{s-r,\,t+r-1}$.

To prove Proposition~\ref{prop: unstable homotopy group}
we need some formulas for differentials
in above spectral sequences, where notation for
elements in the homotopy groups of spheres is the same
as in Toda's book~\cite{Tod}.

\begin{lem}
\label{lem: differentials in spec seq}
In the preceding spectral sequences, one has the following
differentials:\newline
\textup{(1)}
The differential $d^2_{5,0}(U_3):\pi_5(S^5)=\mathbb Z\to \pi_4(S^3)=
\mathbb Z_2$ sends the generator of the domain to the generator
of the codomain (which is the Hopf map $\eta_3:S^4\to S^3$).\newline
\textup{(2)}
The differential $d^2_{5,0}(G\mathbb C^{3}):\pi_5(S^5)=\mathbb Z\to
\pi_6(S^5)= \mathbb Z_2$ sends the generator of the domain to the generator
of the codomain (which is the Hopf map $\eta_5:S^6\to S^5$).\newline
\textup{(3)}
The differential $d^2_{5,2}(G\mathbb C^{3}):\pi_7(S^5)=\mathbb Z_2\to
\pi_8(S^5)= \mathbb Z_{24}$ is injective.\newline
\end{lem}

\begin{proof}  The validity of (1) follows because
this is the only choice of differential which is compatible
with the fact that $\pi_4(U_3)=0$; the latter holds because
the stabilization homomorphism $U_3\to U$ is 6-connected and
$\pi_4(U)=0$ by Bott periodicity.

Next, (2) follows because
the $E^2$-map from $\pi_4(S^3)=E^2_{3,1}(U_3)$ to $\pi_6(S^5)=
E^2_{3,1}(G\mathbb C^{3})$ is given by double
suspension~\cite[Theorem 5.2]{Sch73}, and
this map is bijective~\cite[Proposition 5.1]{Tod}.

To establish (3) it is enough to show that
$d^2_{5,2}(G\mathbb C^{3})$ is nontrivial on
$\pi_7(S^5)=E^2_{5,2}(G\mathbb C^{3})$, which
is generated by the square $\eta^2$ of the Hopf
map~\cite[Proposition 5.3]{Tod}. To this end it helps
to use composition operations of the spectral sequence
as described in~\cite[Proposition 1.4]{Sch73} (also
see the background discussion on pp. 53--54).
Denoting the identity element of $S^5$ by $1$, and
thinking of $\eta^2$ as $1\tinycirc\eta^2$, we write
$d^2_{5,2}(G\mathbb C^{3})(\eta^2)$ as
$d^2_{5,0}(G\mathbb C^{3})(1\tinycirc \eta^2)$
which stably is equal to $d^2_{5,0}(G\mathbb C^{3})(1)\tinycirc\eta^2$
because the operation of precomposing with $\eta^2$ stably
commutes with differentials. Now (2) implies
that $d^2_{5,2}(G\mathbb C^{3})(\eta^2)=
\eta\tinycirc\eta^2=\eta^3$ which has order $2$ in
$\pi_8(S^5)=E^2_{3,3}(G\mathbb C^{3})$~\cite[formula (5.5), page 42]{Tod};
thus $d^2_{5,2}(G\mathbb C^{3})$ is nontrivial.
\end{proof}

The proofs of Proposition~\ref{prop: unstable homotopy group} and
other results will also require the following input:

\begin{prop}\label{prop: bottom filtration}
In the spectral sequences for the groups
$\pi_*\left(F_{S^1}(\mathbb C^{q+1})\right)$
we have
$$E^\infty_{1,6}(G\mathbb C^{q+1})~~\cong~~
E^2_{1,6}(G\mathbb C^{q+1})~~\cong~~\mathbb Z_2$$
for all $q\geq 2$, and in this range the stabilization homomorphisms
$E^\infty_{1,6}(G\mathbb C^{q+1})\to E^\infty_{1,6}(G\mathbb C^{q+2})$
are isomorphisms.
\end{prop}

\begin{proof}
By \cite[Section 3]{Sch73}, on the $E^2$ level the stabilization homomorphisms
$$
E^2_{1,6}(G\mathbb C^{3})~~\to~~E^2_{1,6}(G\mathbb C^{4})~~\cdots~~\to~~
E^2_{1,6}(G\mathbb C)~~\cong~~\mathbb Z_2
$$
are equivalent to the double and stable suspension homomorphisms
$$\pi_{11}(S^5)~~\to~~\pi_{13}(S^7)~~\cdots~~\to~~
\pi_{6}^{\bf S}~~\cong~~\mathbb Z_2$$
and the latter are all isomorphisms by \cite[Chapter V]{Tod}.

As before, let ${\rm Filt}^1(G\mathbb C^{q+1})\subset F_{S^1}(\mathbb C^{q+1})$
denote the set of all equivariant self-maps whose restriction to $S^{2q-1}$,
viewed as the unit sphere in $\{{\bf 0}\}\times\mathbb C^q\subset \mathbb C^{q+1}$,
is the inclusion mapping.  The results of \cite[Section 1]{Sch73} imply that
${\rm Filt}^1(G\mathbb C^{q+1})$ is homotopy equivalent to the iterated loop
space $\Omega^{2q} S^{2q+1}$ and the induced maps in homotopy groups
$$\pi_{t+2q}(S^{2q+1})~~\cong~~\pi_t(\Omega^{2q} S^{2q+1})~~\cong~~
\pi_t({\rm Filt}^1(G\mathbb C^{q+1}))~~\to~~\pi_t(F_{S^1}(\mathbb C^{q+1}))$$
correspond to the edge homomorphisms
$$
E^2_{1,t-1}(G\mathbb C^{q+1})~~\to~~E^\infty_{1,t-1}(G\mathbb C^{q+1})~~\subset~~
\pi_t(F_{S^1}(\mathbb C^{q+1}))
$$
where the left map is given by the projection  $E^2\to E^\infty$ in the lowest
nontrivial filtration ({\it i.e.\/}, degree 1).  Therefore it will suffice to
prove that for all $q\geq 3$ the composite
$$
{\mathbb Z_2}\cong\pi_7({\rm Filt}^1(G\mathbb C^{q+1}))~~\to~~
\pi_7(F_{S^1}(\mathbb C^{q+1}))~~\to~~\pi_7(F_{S^1})~~\cong~~
{\mathbb Z}_2\oplus{\mathbb Z}
$$
is injective.

Following \cite[p. 8]{BecSch}, let $F_{S^1}({\mathbb C}|\mathbb C^{q+1})$
denote the space of $S^1$-equivariant self-maps from $S^{2q-1}$ (again
viewed as the unit sphere in $\{{\bf 0}\}\times\mathbb C^q\subset \mathbb C^{q+1}$)
to $S^{2q+1}$, and consider the restriction mapping $\rho$ from
$F_{S^1}(\mathbb C^{q+1})$ to $F_{S^1}({\mathbb C}|\mathbb C^{q+1})$. By the
methods of \cite{Sch73} this is a fibration whose fiber is
${\rm Filt}^1(G\mathbb C^{q+1})\cong \Omega^{2q} S^{2q+1}$.  If we take the
limits of stabilizations and apply \cite[Theorem 6.6 and (6.8)]{BecSch}, we obtain the
following commutative diagram whose component parts are described below.
Each column in the diagram represents a fibration.

$$
\begin{matrix}
{\rm Filt}^1(G\mathbb C^{q+1})\simeq \Omega^{2q} S^{2q+1} &
\overset{\sigma'}{\longrightarrow} &
{\rm Filt}^1(G\mathbb C)\simeq {\bf Q}(S^1) &
\overset{\lambda'}{\longrightarrow} &
{\bf Q}(S^1)\\
\downarrow i&&\downarrow i_\infty &&\downarrow j \\
F_{S^1}(\mathbb C^{q+1}) & \overset{\sigma}{\longrightarrow} &
F_{S^1} & \overset{\lambda}{\longrightarrow} &
{\bf Q}\left(\Sigma\mathbb{CP}^\infty_+\right)\\
\downarrow \rho&&\downarrow \rho_\infty &&\downarrow p \\
F_{S^1}({\mathbb C}|\mathbb C^{q+1}) & \overset{\sigma_0}{\longrightarrow} &
\displaystyle{\lim_{\;m\to\infty}\! F_{S^1}({\mathbb C}|\mathbb C^{m+1})} &
\overset{\sigma_0}{\longrightarrow} &
{\bf Q}\left(\Sigma\mathbb{CP}^\infty\right)
\end{matrix}
$$

The previously undefined maps and spaces in this diagram are given as follows:

\begin{enumerate}
\item If $X$ is a space then ${\bf Q}(X)$ is the free infinite loop space
$\Omega^\infty\Sigma^\infty X$.
\item The mapping $\rho_\infty$ is the stable analog of the restriction
mapping $\rho$.
\item The mapping $i$ is the inclusion of a relative
function space (specified behavior on a given subspace) into the entire
function space, and  $i_\infty$ is the stable analog of $i$.
\item The mappings  $\sigma$ and $\sigma_0$ are stabilizations, and
$\sigma'$ is stable suspension (the third assertion follows from the commutative
diagram and \cite[Theorem 3.2]{Sch73}).
\item  The mapping $p$ is the first factor coordinate projection on
${\bf Q}\left(\Sigma\mathbb{CP}^\infty_+\right)\simeq
{\bf Q}\left(\Sigma\mathbb{CP}^\infty\right)\times {\bf Q}(S^1)$,
and the mapping $j$ is the second factor slice injection into
that space.
\item  The homotopy equivalences $\lambda$ and $\lambda_0$ are defined as
in \cite[Section 5]{BecSch}, and the fiber map $\lambda'$ is defined using the
homotopy commutativity identity $p\tinycirc\lambda = \lambda_0\tinycirc
\rho_\infty$ which follows from \cite[(6.8)]{BecSch}.  A standard
topological Five Lemma argument (see \cite[unnumbered Proposition on p. 80]{HP}
for the semisimplicial version) implies that $\lambda'$ is also a homotopy
equivalence.
\end{enumerate}

The proof of the proposition now follows quickly.  By the diagram above,
the composite
$$\lambda_*\tinycirc\sigma_*\tinycirc i_*~=~j_\ast\tinycirc\lambda'_*\tinycirc\sigma'_*:
\pi_{2q+7}(S^{2q+1})~\to~\pi_6^{\bf S}~\to~
\pi_7^{\bf S}\left(\Sigma\mathbb{CP}^\infty_+\right)$$
is given by the stabilization $\sigma_*$ followed by the isomorphism
$\lambda'_*$ (recall that $\lambda'_*$ is a homotopy equivalence) followed
by the split injection $j_*$ (note that the slice inclusion $j$ is a
retract).  In the first paragraph of the proof we observed
that $\sigma^\prime_*$ is an isomorphism by results from \cite{Tod}, so
this proves that the displayed composite is
injective, which is what we needed to complete the proof of the proposition.
\end{proof}

We now have enough information to proceed with our analysis of
$\pi_7\left(F_{S^1}(\mathbb C^{q+1})\right)$.

\begin{proof}[Proof of Proposition~\ref{prop: unstable homotopy group}]
Lemma~\ref{lem: differentials in spec seq}(3) implies that
$E^\infty_{s,7-s}(G\mathbb C^{3})=0$ except possibly when $s\neq 1,3$,
and from Proposition~\ref{prop: bottom filtration}
we see that $E^\infty_{1,6}(G\mathbb C^{3})=\mathbb Z_2$, which maps injectively
into $\pi_7(F_{S^1})$.  Therefore, the only uncertainty
involves the group $E^\infty_{3,4}(G\mathbb C^{3})$.
Recall that
$E^2_{3,4}(G\mathbb C^{3})=
\pi_9(S^5)\cong\mathbb Z_2$~\cite[Proposition 5.8]{Tod}
and $E^2_{3,4}(G\mathbb C^4)=0$.
Hence the quotient of $\pi_7(F_{S^1}(\mathbb C^3))$
by the subgroup $E^\infty_{1,6}(G\mathbb C^{3})\cong\mathbb Z_2$ is
$E^\infty_{3,4}(G\mathbb C^{3})$, which is a group of order at most $2$.
Since $E^\infty_{3,4}(G\mathbb C^4)=0$, the stabilization
homomorphism maps $\pi_7(F_{S^1}(\mathbb C^3))$ onto the unique
order two subgroup of $\pi_7(F_{S^1})\cong\mathbb Z\oplus\mathbb Z_2$
that is the image of $E^\infty_{1,6}(G\mathbb C^{3})$.
If $E^\infty_{3,4}(G\mathbb C^{3})\cong\mathbb Z_2$,
then the group $\pi_7(F_{S^1}(\mathbb C^3))$ has order $4$,
and it cannot be isomorphic to $\mathbb Z_4$ because it
has an order $2$ element that does not lie in the kernel
of the homomorphism into $\pi_7(F_{S^1})$;
thus in this case
$\pi_7(F_{S^1}(\mathbb C^3))\cong\mathbb Z_2\oplus\mathbb Z_2$.
\end{proof}

We shall also need the following information about the kernel of
the stabilization map $s_{3*}$ in
Proposition~\ref{prop: unstable homotopy group}:

\begin{prop}\label{prop: eta composition}
The kernel of $s_{3\ast}\co\pi_7(F_{S^1}(\mathbb C^3))\to
\pi_7(F_{S^1}(\mathbb C^4))\cong\pi_7(F_{S^1})$
is contained in the image of the map
$$\eta_6^*:\pi_6(F_{S^1}(\mathbb C^3))\to\pi_7(F_{S^1}(\mathbb C^3))$$
defined by composition with the class of the suspended Hopf map
$\eta_6\in\pi_7(S^6)$.
\end{prop}

\begin{rmk}
\label{rmk: composition additivity}
If $\alpha\in\pi_p(S^q)$ is given and $X$ is an arcwise connected topological
space, then the map $\alpha^*:\pi_q(X)\to\pi_p(X)$ induced by composition is
not always additive (the simplest example is the Hopf class $\eta_2\in
\pi_3(S^2)$ acting on $\pi_2(S^2)\cong\mathbb {Z}$, where $\eta_2^*(k)=
k^2\eta_2)$, but $\alpha^*$ is additive if either $X$ is an $H$-space or
$\alpha$ desuspends to $\pi_{p-1}(S^{q-1})$.  The first condition holds
in the setting of the proposition, and in general the second condition
follows from the first and the adjoint isomorphism $\pi_k(X)\cong
\pi_{k-1}(\Omega X)$.
\end{rmk}

\begin{proof}[Proof of Proposition~\ref{prop: eta composition}]
Throughout this proof, {\sl we shall assume that the kernel of the stabilization
homomorphism $s_{3\ast}$ is nontrivial\/}.  Under this hypothesis, the proof of of
Proposition~\ref{prop: unstable homotopy group} implies that the kernel has order 2
and a nonzero representative corresponds to a nonzero class in
$E^\infty_{3,4}(G\mathbb C^{3})=E^2_{3,4}(G\mathbb C^{3})=
\pi_9(S^5)\cong\mathbb Z_2$.  In particular, we must have $d^2_{3,3}=0$ and
$d^2_{5,2}=0$; note that all higher differentials to or from the groups
$E^r_{3,\ast}$ must vanish (either the domain or codomain
is zero for dimensional reasons) and hence we have $E^3_{3,\ast}(G\mathbb C^{3})\cong
E^\infty_{3,\ast}(G\mathbb C^{3})$.

The composition operations
$$\eta_8^*:\pi_8(S^5)~\to~\pi_9(S^5)~~\cong~~{\mathbb Z}_2~,\qquad
\eta_9^*:\pi_9(S^5)~\to~\pi_{10}(S^5)~~\cong~~{\mathbb Z}_2$$
are surjective and bijective by \cite[Propositions 5.8 and 5.9]{Tod}.
Since the composition operations for the
spectral sequences $E^r_{s,t}(G{\mathbb C}^{3})$ satisfy
$\eta_9^*\tinycirc d^2_{3,3}~=~d^2_{3,4}\tinycirc\eta_8^*$
by \cite[Proposition 1.4]{Sch73}, it follows that $d^2_{3,3}(G\mathbb C^{3})$
is zero if $d^2_{3,4}(G\mathbb C^{3})$ is zero.  Our hypothesis implies that
the latter differential is zero, so it follows that the generator of
$E^2_{3,3}(G{\mathbb C}^{3})\cong\mathbb{Z}_{24}$ must be
a permanent cycle ({\it i.e\/}, it survives to $E^\infty$).  This class is
not a boundary, for the image of the differential $d^2_{5,2}(G\mathbb C^{3})$
which maps to $E^2_{3,3}(G{\mathbb C}^{3})$ has order 2 by part (3) of
Lemma~\ref{lem: differentials in spec seq}.

If $\varphi\in \pi_6
(F_{S^1}(\mathbb C^3))$ represents this permanent cycle in
$E^\infty_{3,3}(G{\mathbb C}^{3})$, then
\cite[Proposition 1.4]{Sch73} implies that $\eta_6^*\phi$ represents
the generator of
$$E^\infty_{3,4}(G{\mathbb C}^{3})~~\cong~~E^2_{3,4}(G{\mathbb C}^{3})~~
\cong~~\pi_9(S^5)~~\cong~~\mathbb{Z}_2~.$$
To prove that $\eta_6^*\varphi$ must be stably trivial, we shall use
the homotopy equivalence $\lambda:F_{S^1}\to \Omega^\infty S^\infty
(\Sigma\mathbb{CP}^\infty_+)$ of \cite{BecSch}, where $X_+$ denotes the space
$X\vee S^0$.  Specifically, it will suffice to check that the homomorphism
$$\eta^*:\pi^\mathbf{S}_6\left(\Sigma\mathbb{CP}^\infty_+\right)~\to~
\pi^\mathbf{S}_7\left(\Sigma\mathbb{CP}^\infty_+\right)$$
is zero, and since $X_+=X\vee S^0$ this reduces to the corresponding
statements for the summands $\Sigma\mathbb{CP}^\infty$ and $S^1$.  In the
second case this follows because $\pi^\mathbf{S}_5=0$ by
\cite[Proposition 5.9]{Tod}, while in the first this follows because
$\pi^\mathbf{S}_6\left(\Sigma\mathbb{CP}^\infty\right)$ is finite and
$\pi^\mathbf{S}_7\left(\Sigma\mathbb{CP}^\infty\right)$ is infinite cyclic
(see \cite{Mukai}, Proposition 8.7, and recall that the inclusion of
$\mathbb{CP}^3$ in $\mathbb{CP}^\infty$ is 7-connected).
\end{proof}

\section{Structure sets and self-equivalence spaces}
\label{sec: self and structure sets}

Sections 5 and 6 allow us to work with homotopy self-equivalences
of $S^7\times\cpq$ very effectively, and in Sections 8--10 we
shall use these results to prove the Dichotomy Principle
(Theorem~\ref{thm: main dichotomy}) which characterizes the homotopy
self-equivalences of $S^7\times\cpq$ that are homotopic to diffeomorphisms.
The results of this section are the basis for studying the image of a
homotopy self-equivalence $h$ in the Sullivan-Wall structure set
${\bf S}^{s}(S^m\times\cpq)$ when $h$ comes from a class in the homotopy
group
$\pi_m(E_1(\cpq ))\cong\pi_m(F_{S^1}\left(\,\mathbb C^{q+1})\,\right)$,
where $m\ge 2$.

If $X$ is a closed smooth manifold
and $m\geq 1$, then it is well known that the relative
structure sets
$${\bf S}^{s}(D^m\times X~\text{rel}~S^{m-1}\times X)~~=:~~
{\bf S}_m^{s}(X)$$
have group structures given by a ``stacking'' operation analogous to
the group structure for homotopy groups ({\it e.g.\/}, see
\cite{Bro-Pet, Pet70, Roth70}).
The definition of this operation
is obvious for $m=1$, and for $m\geq 2$
follows by the recursive application of the case $m=1$; in analogy with
homotopy groups, the operation is commutative if $m\geq 2$. Furthermore,
if $n=m+\dim X\geq 5$ then all maps in the surgery exact sequence
\[
\xymatrix{
L^s_{n+1}\left(\,\pi_1(X)\,\right)\to
{\bf S}^{s}_m(X)\to
\left[(D^m\times X)/(S^{m-1}\times X),F/O\right]\to
L^s_{n}\left(\,\pi_1(X)\,\right)
}
\]
are homomorphisms of groups.  The terms in the sequence involving $F/O$
simplify to $[\Sigma^m X,F/O]\oplus \pi_m(F/O)$ (where $\Sigma^m$ denotes
$m$-fold suspension) because there is a natural homeomorphism from the
quotient $(D^m\times X)/(S^{m-1}\times X)$ to $\Sigma^m X\vee S^m$ ({\it cf.\/}
\cite[Lemma 2.4 and the definition on p. 296]{At}).

The group operations on the sets ${\bf S}^{s}_m(X)$ are extremely useful
for studying the structure sets ${\bf S}^{s}(S^m\times X)$
because there is a canonical mapping
\begin{equation}
\label{form: map Gamma}
\Gamma\co {\bf S}_m^s(X)\to {\bf S}^s(S^m\times X)
\end{equation}
that is obtained by {\sl extension by diffeomorphism\/}:
Given a representative
$h:(W,\partial W)~\longrightarrow~(D^m\times X, \d D^m\times X)$
of a relative structure, where the boundary map
$\partial h$ is a diffeomorphism (resp. homeomorphism), we set
$V$ equal to $W~\cup_{\partial h}~D^m\times X$, where
the $D^m$-factor is given the opposite orientation,
and let $V\to S^m\times X$ be the well-defined map
which is given by $h$ on $W$ and the identity on $D^m\times X$.
This construction preserves homotopies through maps that are
diffeomorphisms on the boundary, and hence it
yields a well-defined map $\Gamma$. Furthermore, the construction
preserves the identity structure, and hence $\G$ preserves the base points
(recall that while ${\bf S}_m^s(X)$ is a group,
${\bf S}^s(S^m\times X)$ is merely a pointed set).

Suppose now that we have a homotopy self-equivalence $h$ of $S^m\times X$
whose homotopy class in ${\mathcal E}^\prime(S^m\times X)$
comes from $\pi_m(E_1(X))$.  We claim that the associated simple
homotopy structure in ${\bf S}^s(S^m\times X)$ lies in the image of
${\bf S}_m^s(X)$.  This is true because every element of
$\pi_m(E_1(X))$ can be represented by a map
$(D^m, \d D^m=S^{m-1})\to E_1(X)$ such that all points in a small
neighborhood of $\d D^m$ are mapped to ${\bf id}(X)$.
If $m\ge 1$, so that ${\bf S}^s_m(X)$ is equipped with the stacking
group structure, this construction yields a group homomorphism
\begin{equation}
\label{form: map Psi}
\Psi\co\pi_m(E_1(X))~~\longrightarrow~~{\bf S}_m^{s}(X)
\end{equation}
(which equals $\sigma_m\tinycirc k$ in the notation of
\cite[Sections~3.3 and 3.6]{ABK}) such that
the standard map $\pi_k(E_1(X))\to {\bf S}^{s}(S^m\times X)$ defined
via adjoint can be factored as $\Gamma\tinycirc\Psi$.

In many cases the classes in the image of $\Gamma\tinycirc\Psi$ represent
{\sl tangential\/} homotopy self-equivalences of $S^m\times X$; recall
that if $Y$ is a smooth manifold (possibly with boundary), then a homotopy
self-equivalence $h$ of $(Y,\partial Y)$ is said to be tangential if and
only if any (hence all) of the following hold:

\begin{enumerate}
\item If $\nu_Y$ is the normal bundle of $Y$ in some Euclidean space,
then the vector bundles $\nu_Y$ and $h^*\nu_Y$ are stably isomorphic.

\item If $\tau_Y$ is the tangent bundle of $Y$,
then the vector bundles $\tau_Y$ and $h^*\tau_Y$ are stably isomorphic.

\item  The normal invariant of $h$ goes to zero under the canonical
homomorphism $[Y,F/O]\to [Y,BO]$.

\item  The normal invariant of $h$ lies in the image of the canonical
homomorphism $[Y,F]\to [Y,F/O]$.
\end{enumerate}

(The first two statements are equivalent because the images of $\tau_Y$
and $\nu_Y$ are negatives of each other in ${\widetilde {KO}}(Y)\cong
[Y,BO]$, the third is equivalent to the first two because the normal
invariant's image in $[Y,BO]$ is the stable difference class
of $\nu_Y-(h^*)^{-1}\nu_Y$, and the fourth is equivalent to the
third by the exactness properties of the fibration $F\to F/O\to BO$.)

In particular, classes in the image of $\Gamma\tinycirc\Psi$ represent
tangential homotopy self-equivalences of $S^m\times X$ whenever
$X$ is stably parallelizable because $\nu=0$ then (this essentially
goes back to \cite[Section 7]{nov}).  Here is another class of examples
which will be useful for our purposes:

\begin{prop}\label{prop: tangentials}
If $q\geq 1$ and $m\geq 2$, then all classes in the image of
$$\Gamma\tinycirc\Psi\co\pi_m(E_1(\cpq))~~\longrightarrow~~{\bf S}^{s}(S^m\times\cpq)$$
are tangential.
\end{prop}

{\bf Remark.}
For completeness, we note that this result is also valid when $m=1$.
In fact, if we embed $PU_{q+1}$
into $E_1(\cpq)$ using the action by projective unitary transformations,
then Theorem 2.1 of \cite{Ja63}, the subsequent discussion on page 47 of that paper,
the results of \cite{BecSch}, and the commutative diagram
\[
\xymatrix{
S^1\ar[r]^\subset\ar[d]^{\simeq}&
U_{q+1}\ar[r]\ar[d]^{\subset}&
PU_{q+1}\ar[d]^{\subset}&\\
\mathrm{Maps\,}(\cpq,S^1)\ar[r]^\subset&
F_{S^1}(\mathbb C^{q+1})\ar[r]&
E_1(\cpq)
}
\]
combine to imply that $\pi_1(PU_{q+1})\to \pi_1(E_1(\cpq)\,)$ is
an isomorphism, so that every element of the latter is represented
by a map coming from a diffeomorphism of $S^1\times\cpq$ and the result
follows because diffeomorphisms are always tangential.  However,
this exceptional case will not be needed here.

\begin{proof}[Proof of Proposition~\ref{prop: tangentials}]
Since $m\geq 2$ the map of homotopy groups induced by the orbit space
projection from
$\pi_m\left(\,F_{S^1}(\mathbb C^{q+1})\,\right)$ to $\pi_m(E_1(\cpq))$
is an isomorphism, so we can represent a class in the latter group by
a continuous $S^1$-equivariant map $g:D^m\times S^{2q+1}\to S^{2q+1}$ whose restriction
to a neighborhood of the boundary is projection onto the second
coordinate.  Let $g'$ denote the associated equivariant homotopy self-equivalence of
$D^m\times S^{2q+1}$ whose projections onto the first and second
factors are $(p_D,g)$ where $p_D$ is the usual factor projection onto $D^k$.
By construction this equivariant self-equivalence is the identity near the
boundary, and of course the same is true when one passes to the orbit space
$D^m\times\cpq$.  
Now the extension by diffeomorphism construction,
described after (\ref{form: map Gamma}),
gives a homotopy self-equivalence of $S^m\times S^{2q+1}$.

By the second characterization of tangential self-equivalences in the list above,
it will suffice to construct a map of stable tangent bundle total spaces
$$G:T(D^m\times\cpq)~~\longrightarrow~~T(D^m\times\cpq)$$
which is a linear isomorphism on each fiber, covers the induced map $g/S^1$
of orbit spaces, and is the identity near the
inverse image of the boundary $S^{m-1}\times\cpq$ in $T(D^m\times\cpq)$.
We can do this explicitly as follows:
The stable complex tangent bundle of $\cpq$ is isomorphic to a direct sum of
$(q+1)$ copies of the dual to
canonical line bundle ({\it e.g.\/}, see \cite{Milnor-Stasheff}) with
total space $S^{2q-1}\times_{S^1}(\mathbb{C}^{q+1})^*$, and therefore
the balanced product
$$G~~=~~\left(g'\times_{S^1}\mathbf{id}((\mathbb{C}^{q+1})^*)\right)\times
\mathbf{id}(\mathbb{R}^k)$$
defines a map of stable tangent bundles covering $g'$, and such that the
restriction to a neighborhood of
$S^{k-1}\times \left(S^{2q-1}\times_{S^1}(\mathbb{C}^{q+1})^*\right)\times\mathbb{R}^k$
is the identity.
\end{proof}

{\bf Composition operations on structure sets.}
If the normal invariant of a homotopy self-equivalence is trivial, it is
often very difficult to determine whether or not its class in the
structure set is trivial, and successful computations require a broad
assortment of techniques.  We conclude this section with a homotopy-theoretic
method which works in the case needed in
Section~\ref{sec: stably trivial is trivial}
(see~\ref{cor: stably trivial B}).  In an earlier version of this paper, we
analyzed this case using methods related to~\cite{MTW}.

It is well known that the long exact surgery sequence
in~\cite[Chapter 10]{Wal-book} is realized as the exact sequence of homotopy groups
associated to a fibration of $\Delta$-{\sl sets} in the sense of \cite{RoSa}:
$$
\cS_{\bullet}^{{DIFF}, x} (M^n) \to \cF_{\bullet} (M^n, F/O) \to
L^x_{\bullet} \left(\pi_1 (M), w_1 (M^n)\right)
$$
Here $M^n$ is a closed smooth manifold of dimension $n\geq 5$ and the
superscript $x$ in our case is $s$ for simple homotopy structures or $h$ for
homotopy structures ({\it e.g.\/}, see Rourke~\cite{Ro} or Quinn~\cite{Q1}).
A $k$-simplex in $\cS^{{DIFF}, x}_{\bullet} (M^n)$ is represented
by a suitable homotopy equivalence of manifold
$n$-ads~\cite[Chapter 0]{Wal-book} into the standard $n$-ad given by
$\Delta_k \times M$, the set $\cF_{\bullet} (M^n, F/O)$ is the simplicial
function set as defined in May~\cite[Definition I.6.4, p. 17]{My} or
Goerss-Jardine~\cite[p. 20]{GJ}, and a $k$-simplex
of $L_{\bullet}^x (-)$ is represented
by a suitable surgery problem of manifold $n$-ads.  If $n\leq 4$ then the
same constructions still yield the $\Delta$-sets $\cS^{{DIFF}, x}_{\bullet}
(M^n)$ with analogous mappings from these sets to the function sets
$\cF_{\bullet} (M^n, F/O)$.

Let  $M^n$ be as above, with no restriction on its dimension.  If
$\cH_\bullet(M^n)$ is the subobject of the simplicial function monoid
$\cF_{\bullet} (M^n, M^n)$ defined by restricting to self-maps
which are homotopic to the identity, then there is a canonical
map $\Psi_\bullet$
from $\cH_\bullet(M^n)$ to $\cS^{{DIFF}, s}_{\bullet}
(M^n)$ given by taking a homotopy equivalence $h:\Delta_k\times M^n\to M^n$
and sending it to the homotopy equivalence of $n$-ads
$$H :\Delta_k\times M^n~\longrightarrow~\Delta_k\times M^n$$
defined by $H(u,v)=\bigl(\,u,h(u,v)\,\bigr)$. The definitions imply that
the induced maps of homotopy groups from
$\pi_k\bigl(\,E_1(M^n)\,\bigr)$ to $\pi_k\left(\cS^{{DIFF}, s}_{\bullet}
(M^n)\,\right)\cong{\bf S}_k^{s}(M^n)$
are precisely the homomorphisms $\Psi$ defined in (\ref{form: map Psi}).

One useful feature of the preceding constructions is that they yield a
reasonably well-behaved system of composition operations on the structure
sets $\cS^{{DIFF}, x}_{\bullet}(M^n)$.

\begin{prop}\label{prop: composition operators}
Let $M^n$ be a closed, smooth simply connected manifold, and let
$\alpha\in \pi_p(S^q)$, where $p\geq 1$.  Then $\alpha$ induces
a map of structure sets
$$C(\alpha):{\bf S}_q^{s}(M^n)~\longrightarrow {\bf S}_p^{s}(M^n)$$
such that there is a commutative diagram
\[
\xymatrix{
\pi_q\bigl(\,E_1(M^n)\,\bigr)\ar[r]^\Psi\ar[d]^{\alpha^*}&
{\bf S}_q^{s}(M^n)\ar[r]\ar[d]^{C(\alpha)}&
\left[\Sigma^q(M^n_+),F/O\right]\ar[d]^{\alpha^*}&\\
\pi_p\bigl(\,E_1(M^n)\,\bigr)\ar[r]^\Psi&
{\bf S}_p^{s}(M^n)\ar[r]&
\left[\Sigma^p(M^n_+),F/O\right]
}
\]
in which the right and left hand maps $\alpha^*$ are defined by composition.
Furthermore, if $p\geq 2$ and $\alpha$ desuspends to $\alpha\in \pi_{p-1}
(S^{q-1})$, then $C(\alpha)$ is a homomorphism of abelian groups.
\end{prop}

\begin{proof}
If $(X,x)$ and $(Y,y)$ are pointed topological spaces and $f:(X,x)\to
(Y,y)$, then composition with $\alpha$ induces mappings
$$\alpha_W^*:\pi_q(W,w)~~\longrightarrow~~\pi_p(W,w)~,\quad
{\rm where}\quad(W,w)~=~(X,x)~{\rm or}~(Y,y)$$
such that $f_*\tinycirc\alpha_X^*=\alpha_Y^*\tinycirc f_*$, and a similar result
for mappings of $\Delta$-sets follows by taking geometric realizations.
Therefore the existence of $C(\alpha)$ follows from the isomorphism
$${\bf S}_\ast^{s}(M^n)~~\cong~~\pi_\ast\left(\,\cS^{{DIFF}, s}_{\bullet}
(M^n)\,\right).$$
To verify the existence of the commutative diagram, note that the horizontal
maps are the induced morphisms of homotopy groups associated to the
$\Delta$-set mappings $\cH_\bullet(M^n)\to\cS^{{DIFF}, s}_{\bullet}
(M^n)$  and $\cS^{{DIFF}, s}_{\bullet}(M^n)\to\cF_{\bullet} (M^n, F/O)$,
so that the commutativity of the diagram follows from the associativity
of composition for functions.
Finally, the additivity statement follows from Remark~
\ref{rmk: composition additivity}.
\end{proof}

We shall need the following simple consequence of
Proposition~\ref{prop: composition operators}:

\begin{cor}
\label{cor: zero compositions}
In the notation of
\textup{Proposition~\ref{prop: composition operators}}, if $\alpha$ desuspends to an
element of $\pi_{p-1}(S^{q-1})$ with order $A$, and the order $B$ of
$y\in {\bf S}_q^{s}(M^n)$ is finite and prime to $A$, then
$C(\alpha)y=0$ in ${\bf S}_p^{s}(M^n)$.
\end{cor}

\begin{proof}
By Proposition~\ref{prop: composition operators}
we know that $C(\alpha)$ is additive, and hence the
order of $C(\alpha)y$ divides $B$.  On the other hand, we also know
that $C(\alpha)y=y_*\alpha$ where $y_*:\pi_p(S^q)\to {\bf S}_p^{s}(M^n)$
is the induced map of homotopy groups. Since $y_*$ is always additive,
it follows that the order of $C(\alpha)y=y_*\alpha$ also divides $A$.
But $A$ and $B$ are relatively prime, so it follows that $C(\alpha)y$
must be zero.
\end{proof}

\section{Stably trivial self-equivalences from $\pi_7(E_1(\cpq ))$ are trivial}
\label{sec: stably trivial is trivial}

Consider the homomorphism
\[\Psi:\pi_7(F_{S^1}(\mathbb C^3))~~\cong~~\pi_7(E_1(\cpthree))~~\longrightarrow~~
{\bf S}_7^{s}(\cptwo)\]
defined in (\ref{form: map Psi}) and the stabilization map
$s_{3\ast}\co\pi_7(F_{S^1}(\mathbb C^{3}))\to \pi_7(F_{S^1}(\mathbb C^{4}))$.
As the title of this section suggests, here is what we want to prove:

\begin{prop}\label{prop: stably trivial A}
If $\alpha\in \pi_7(E_1(\cptwo))\cong\pi_7(F_{S^1}(\mathbb C^3))$ lies in
the kernel of the homomorphism $s_{3\ast}$, then $\Psi(\alpha)=0$.
\end{prop}

\begin{cor}\label{cor: stably trivial B}
Suppose that
$\alpha\in \pi_7(E_1(\cptwo))\cong\pi_7(F_{S^1}(\mathbb C^3))$ lies in
the kernel of $s_{3\ast}$ and is represented by an equivariant
self-equivalence ${\widetilde f}$ of $S^7\times S^{5}$, so that
the homotopy class of ${\widetilde f}$ is given by
$\Gamma\tinycirc\Psi(\alpha)$.  Then the
induced homotopy self-equivalence $f = {\widetilde f}/S^1$ on
the orbit space $S^7\times \cptwo$ is homotopic to a diffeomorphism.
\end{cor}

\begin{proof}[Proof of Proposition~\ref{prop: stably trivial A}]
By the commutativity of the diagram in
Proposition~\ref{prop: composition operators}, we have
$\Psi\,\eta_6^*=C(\eta_6)\,\Psi$, where $\eta_6\in
\pi_7(S^6)\cong\mathbb{Z}_2$ is nontrivial, and if we combine this
with Proposition~\ref{prop: eta composition} we see that $\eta_6^*$ maps onto
the kernel of $s_{3\ast}$.
Therefore $\alpha=\eta_6^*\alpha'$ for some $\alpha'$, so that
$\Psi(\alpha)= C(\eta_6)\Psi(\alpha')$.  The class $\alpha'$ has
finite order because the spectral sequences of \cite{Sch73}
imply that $\pi_6\left(F_{S^1}(\mathbb C^3)\right)$ is finite, and therefore
$\Psi(\alpha')$ also has finite order.  Since $\eta_6$
desuspends to $\eta_5\in\pi_6(S^5)\cong\mathbb{Z}_2$, by
Corollary~\ref{cor: zero compositions} it will suffice to prove
that $C(\eta_6)$ annihilates the Sylow
$2$-subgroup in the torsion subgroup of ${\bf S}_6^{s}(\cptwo)$.

Consider the following terms in the partial surgery exact sequence for $\cptwo$
(as noted in \cite{ABK}, this portion also exists for 4-manifolds):
$$0=L_{11}(1)~\rightarrow~{\bf S}_6^{s}(\cptwo)~
\rightarrow~\left[\Sigma^6\cptwo\vee S^6,F/O\right]\,\cong\,
\left[\Sigma^6\cptwo,F/O\right]\oplus\pi_6(F/O).$$
The map $C(\eta_6)$ is additive by (1.7) of \cite{Tod} and
$\eta_6^*$ is trivial on $\pi_6(F/O)\cong\mathbb{Z}_2$
because $\pi_7(F/O)=0$. Thus everything reduces to proving that
the torsion subgroup of $\left[\Sigma^6\cptwo,F/O\right]$ has odd order.

Since $\cptwo$ is the mapping cone of the Hopf map, which
represent $\eta_2\in \pi_3(S^2)$, the group $[\Sigma^6\cptwo, F/O]$
fits into the following commutative diagram, whose
rows are exact cofiber sequences for
the map $\Sigma^6\cptwo\to \Sigma^6(\cptwo/\cpone)=S^{10}$,
and columns are portions of
the exact homotopy sequence of the fibration $p\co F\to F/O$.
\[
\xymatrix{
& & &\pi_8(O)\ar[r]^{\eta^\ast\ \ }\ar[d]&
\pi_{9}(O)\ar[d]\\
\pi_{9}(F)\ar[r]^{\eta^\ast\ \ }\ar[d]&  \pi_{10}(F)\ar[r]\ar[d]&
[\Sigma^6\cptwo, F]\ar[r]\ar[d]&\pi_8(F)\ar[r]^{\eta^\ast\ \ }\ar[d]&\pi_{9}(F)\ar[d]\\
\pi_{9}(F/O)\ar[r]^{\eta^\ast\ \ }& \pi_{10}(F/O)\ar[r]&
[\Sigma^6\cptwo, F/O]\ar[r]&\pi_8(F/O)\ar[r]^{\eta^\ast\ \ }\ar[d]&
\pi_{9}(F/O)\ar[d]\\
& & &\pi_8(BO)\ar[r]^{\eta^\ast\ \ }&
\pi_{9}(BO)
}
\]
The results of Toda~\cite{Tod} and Adams~\cite{Ada-J-IV} yield the
following information about mappings in the diagram:

\begin{enumerate}
\item The group $\pi_{10}(F)$ is isomorphic to $\mathbb{Z}_2\oplus
\mathbb{Z}_3$ and the image of $\eta^*$ in this group has order
2~\cite[Chapter XIV]{Tod}.
\item  The mapping $\eta^*$ on $\pi_8(F)$ is injective~\cite[Chapter XIV]{Tod}.
\item  If $k=8$ or 9 then the map from $\pi_k(O)\cong\mathbb{Z}_2$ to
$\pi_k(F)$ is injective~\cite[discussion following Theorem 1.2]{Ada-J-IV}.
\item  The kernel of the
mapping $\eta*$ on $\pi_8(BO)\cong\mathbb{Z}$ has index
2~\cite[discussion following Theorem 1.2]{Ada-J-IV}.
\end{enumerate}
Diagram chases now imply that 
\begin{itemize}
\item[\textup{(i)}]
the torsion subgroup $T$ of
$[\Sigma^6\cptwo, F/O]$ maps to a subgroup of $\pi_8(F/O)$
in the image of $\pi_8(F)\to\pi_8(F/O)$, 
\item[\textup{(ii)}] the restriction
of $\eta^*$ to the torsion
subgroup of $\pi_8(F/O)$ is injective.  
\end{itemize}
Therefore $T$ must be
contained in the image of $\pi_{10}(F/O)\to [\Sigma^6\cptwo, F/O]$.
Since $\pi_{10}(F/O)\cong \mathbb{Z}_2\oplus
\mathbb{Z}_3$ and the image of $\eta^*$ in this subgroup has
order 2, it follows that $T\cong\mathbb{Z}_3$, and as noted
above this yields the proposition.
\end{proof}

\begin{proof}[Proof of Corollary~\ref{cor: stably trivial B}]
This follows trivially because the extension by diffeomorphism
mapping 
$$\Gamma\co {\bf S}_7^s(\cptwo)\to {\bf S}^s(S^7\times \cptwo),$$
see (\ref{form: map Gamma}),
is base point preserving, where in each case the base point is represented
by the identity map of the given manifold.
\end{proof}

\section{The normal invariant of the stable element of order $2$}
\label{sec: order 2 nontrivial normal inv}

As mentioned in Section~\ref{sec: spectral seq},
the group $\pi_7(F_{S^1})$ has a unique order two
element, and in this section we show that the corresponding homotopy
self-equivalences $f$ of $S^7\times \cpq$ (where $q\ge 3$) have
nontrivial normal invariants; if $q=2$ the same argument is also
valid for the nontrivial class in $E^\infty_{1,6}(G\mathbb C^{3})=
E^2_{1,6}(G\mathbb C^{3})=\pi_{11}(S^5)
\cong\mathbb Z_2$ (see Proposition~\ref{prop: bottom filtration}).
A proof of this was given in~\cite{Sch87}, but since it depends upon a
result whose proof has not yet been published we shall give a
complete derivation here by somewhat different methods.

Proposition~\ref{prop: tangentials} shows that
$f$ is tangential. Let $\g$ denote the normal bundle of
$X:=S^7\times\cpq$ in some higher dimensional $S^{m+2q+7}$,
let $\hat f$ denote an arbitrary self-map of $\g$ that covers $f$,
let $T(\hat f)$ be the induced self-map of its Thom space $T(\g)$, and
let $q\co S^N\to T(\g )$ be the map that collapses
the complement of a tubular neighborhood of $X$ to a point.  We shall use
these mappings to describe the normal invariant of $f$ as follows:

\begin{claim}\label{claim: sdual}
In the notation of the preceding paragraph, a lifting of
the (smooth) normal invariant ${\mathfrak q}(f)$ to $[X, F]$
is given by the Spanier-Whitehead (or $S$-) dual of $T(\hat f)\tinycirc q$, where
the set $[X,F]$ of free homotopy classes is identified with
the set of based homotopy classes $[X_+, F]=\{X_+,S^0\}$.
\end{claim}
\begin{proof}
This is well known, but references in the literature are slightly elusive.
For topological surgery theory, the analogous  result  is worked out clearly
and explicitly in Section 2 of \cite{MTW}, and the discussion in that paper only
uses standard formal properties of topological manifolds and their stable normal
bundles.  All of these properties also hold in the category of smooth manifolds,
and accordingly the
approach in \cite{MTW} also works for smooth surgery theory in a setting like
that of Section II.4 in~\cite{Bro-book}.
\end{proof}

The proof of Proposition~\ref{prop: unstable homotopy group} shows that $f$
comes from the $E^2_{1,6}$-term which is in the bottom filtration,
which means that it comes from
$E^1_{1,6}=\pi_7({\rm Filt}^{(1)}, {\rm Filt}^{(0)})$,
where ${\rm Filt}^{(0)}=\{\bf{id}\}$ and
${\rm Filt}^{(1)}$ consists of functions in $F_{S^1}(\mathbb C^{q+1})$
that restrict to inclusions on the standard
subsphere $S^{2q-1}\subset S^{2q+1}$.

Changing $f$ within its homotopy class, we can assume that
the restriction of $f$ to the submanifolds
$S^7\times\mathbb {CP}^{q-1}$ and $\{\ast\}\times\cpq$
are the standard inclusions, and by the
Homotopy Extension Property we may also arrange $f$
to equal the identity on a smooth regular neighborhood of
the union of these submanifolds.
The complement of the interior of the regular neighborhood
is an smoothly embedded $(2q+7)$-disk, which we denote $D$.

We shall need the following homotopy-theoretic fact:

\begin{fact}\label{fact: pinch}
Let $M^n$ be an $n$-manifold, let $D\subset M^n$ be a closed
bicollared coordinate disk, let $M_0:=M-\mathrm{Int}(D)$, and
let $j$ be a continuous map of $M$ into a manifold $Y$.
Given a continuous map $\a\co S^n\to Y$,
let $\widehat\a$ be the composition
\[
\xymatrix{
M^n~\ar[r]^{\!\!\!\!\!\!\!\!\!\!\!\mathrm{pinch}}&
\ M^n\vee S^{n}\ \ar[r]^{\:\:\:\:\:\:{j}\vee {\a}\ }&
\ Y\vee Y \ar[r]^{\:\:\:\:\:\mathrm{fold}}&
Y
}
\]
where the {\rm pinch} map collapses $\d D$ to a point. Then
every continuous map $M\to Y$ that
equals $j$ on a neighborhood of $M_0$ is homotopic to
$\widehat\a$ for some $\a\co S^n\to Y$.
\end{fact}
\begin{proof}
This follows by applying~\cite[III.6.21]{Wh} to the cofiber sequence
$S^{n-1}=\partial M_0\to M_0\to M^n$.  A relatively minor misprint in
\cite{Wh} seems worth mentioning in order to avoid confusion:  The operation
of $\pi_n(Y)$ on $[M,Y]$ in~\cite[p. 136]{Wh} is induced by a coaction map
$M\to M\vee S^n$ instead of
a map $M\to M\wedge S^n$ as stated there (the codomain of the map $\theta$ on
line 4 is the wedge and not the smash product).
\end{proof}

The next goal is to apply~\ref{fact: pinch} and obtain a factorization of
$f$ which will be used to compute its normal invariant.

If $p_1$ and $p_2$ are the coordinate projections
from $X=S^7\times\cpq$ to $S^7$ and $\cpq$ respectively, then
the compositions $p_1\tinycirc f$ and $p_2\tinycirc f$
agree with $p_1$ and $p_2$ off $D$.
Applying Fact~\ref{fact: pinch} with $j$ equal to
$p_1$ and $p_2$, we obtain homotopy classes
$[\a_1]\in \pi_{2q+7}(S^7)$ and $[\a_2]\in \pi_{2q+7}(\cpq)$
such that $\widehat{\a}_i$ is homotopic to $p_i\tinycirc f$.
Since maps into a product are determined by maps into their factors,
this yields a self-map $\widehat\a=({\widehat{\alpha}}_1, {\widehat{\alpha}}_2)$
of $X$ which is homotopic to $(p_1\tinycirc f, p_2\tinycirc f)=f$.

In fact, $p_1=p_1\tinycirc f$  because
$f$ comes from $\pi_7(E_1(\cpq))$, so we can choose the class
$\alpha_1$ in Fact~\ref{fact: pinch}
to be constant.  On the other hand,
$\a_2$ cannot be nullhomotopic because
$f$ is not homotopic to $\mathbf{id}(X)$.
The map $\a_2$ lifts to $S^{2q+1}$ because the orbit space
projection $\omega\co S^{2q+1}\to\cpq$ induces isomorphisms
$\pi_k(S^{2q+1})\to\pi_k(\cpq)$ for all $k\geq 3$.
If $q\ge 2$, then $\pi_{2q+7}(S^{2q+1})\cong\mathbb{Z}_2$, and
if $[g]$ be the nontrivial element of $\pi_{2q+7}(S^{2q+1})$
then we must have $\omega_*([g])=[\a_2]$.

In summary, $\a$
is homotopic to ${\bf SL}\tinycirc \omega\tinycirc g$,
where
{\bf SL} denotes the slice inclusion of $\cpq$ in $S^7\times \cpq$,
and therefore
the map $f$ is homotopic to the composite in the diagram below:
\[
\xymatrix{
X\ar[r]^{\!\!\!\!\!\!\!\!\!\!\!\!\!\!\!\!\!\!\!\!\mathrm{pinch}}&
\ X\vee S^{2q+7}\ \ar[r]^{{\bf id}\vee g\ }&
\ X\vee S^{2q+1}\ \ar[r]^{\:\:\:{\bf id}\vee \omega\ }&
\ X\vee \cpq\ \ar[r]^{\:\:\:\:\:{\bf id}\vee{\bf SL}\ }&
\ X\vee X\ \ar[r]^{\:\:\:\:\:\:\:\mathrm{fold}}& X
}
\]
Since {\bf SL} is an embedding of a proper subspace,
the composite
${\bf SL}\tinycirc\omega\tinycirc g$ actually
determines a class in $\pi_{2q+7}\left(X-\{p\}\right)$
for some $p\in X$ not in the image of {\bf SL}.

The study of this construction, which {\sl twists the identity map of a manifold
$M^n$ by a class in $\pi_n(M^n-\{p\})$\/}, dates back to
\cite[Section 7]{nov}, particularly in the case where the stable normal bundle of $M^n$
pulls back trivially under the composite $S^n\to M^n-\{p\}\subset M^n$.  Note that
the latter condition holds for the displayed mapping because the pullback of a
complex vector bundle under the quotient map  $\omega$ is always stably trivial
(since ${\widetilde{K}}(S^{2q+1}) = 0$).
Our computatons of normal invariants in this section will use
the explicit trivialization for this example in
\cite[(8.7)--(8.8)]{BecSch}.

If $\alpha\in \pi_n(M^n-\{p\})$ satisfies the pullback condition in the preceding
paragraph, then as in \cite{nov} a choice of pullback trivialization yields a
twisted suspension $\alpha^*$ in $\pi_{n+k}\left(\,
(M-\{p\})^\nu\,\right)$, where $\nu$ denotes the stable normal
bundle of $M^n$ in $\mathbb{R}^{n+k}$ for some sufficiently large value of
$k$, and the normal invariant of the twisting of ${\bf id}_M$ by $\alpha$ is determined
by the $S$-dual of $\alpha^*$, viewed as an element of the
group $\{M^n,S^0\}$.

The same methods also yield the following refinement:  {\sl If
$W\subset M$ is a codimension zero submanifold of $M-\{p\}$ with boundary
and $\alpha$ lies in the image of $\pi_n(W)$, then one can lift the twisted
suspension to a class $\beta^*\in \pi_{n+k}(W^\nu)$ and the normal invariant
is the image of the $S$-dual of $\beta^*$, which is a class in
$\{V/\d V,S^0\}$, under the homomorphism
$c^*:\{V/\d V,S^0\}\to \{M^n,S^0\}$ induced by
the collapsing map $c:M\to V/\d V$.}

We are now ready to prove the main computational result of this section:
{\sl If $q\geq 2$ and $f$ is a homotopy self-equivalence of $X=S^7\times \cpq$
which is not homotopic to the identity and is in the lowest
nontrivial filtration of
$\pi_7(E_1(\cptwo))$ with respect to the spectral sequence of
Section~\ref{sec: spectral seq}, then the normal invariant of $f$ is
nontrivial.\/}\ (In fact, our results describe the restriction of the normal
invariant to $S^7\times\cpone$ explicitly).

By the preceding discussion, the normal invariant is the $S$-dual of
the composite
${\bf SL}^{\rm twisted}\tinycirc \omega^{\rm twisted}\tinycirc \nu^2$
where $\nu^2$ is the nontrivial element of $\pi_{2q+7}(S^{2q+1})\cong
\pi_6^{\bf S}\cong\mathbb Z_2$,
$\omega^{\rm twisted}\in\pi_{2q-1}\left(\,(\cpq)^\xi\right)$ is the twisted
suspension map associated to the previously described trivialization
for the pullback of the stable normal bundle $\xi$ of $\cpq$ by the
quotient map $\omega:S^{2q+1}\to\cpq$,
and ${\bf SL}^{\rm twisted}$ is the map from $\left(\cpq\right)^\xi$
to $S^k_+\wedge\left(\cpq\right)^\xi$ induced by the slice inclusion of
$\cpq$ in $S^7\times \cpq$.  Thus the proof of the main result reduces to
analyzing the $S$-dual of this composite $S$-map.

The $S$-dual of the explicitly chosen twisted suspension
$\omega^{\rm twisted}$ is the Umkehr map $p^{\,!}:\Sigma (\cpq_+)\to S^0$
(see~\cite[Section 8, especially pp. 17--18]{BecSch}), and the
$S$-dual of
the twisted slice inclusion is the map
$S^{-k}_+\wedge\cpq\to\cpq$ induced by the $S$-map $S^{-k}_+=
S^{-k}\vee S^0\to S^0$ which is trivial on the first summand and
the identity on the second.  Since
$\nu^2$ is self-dual, it follows that the normal invariant
is determined by the composite $\Lambda$ of the Umkehr map $p^{\,!}$ and
the smash product of the suspension of
$\nu^2$ (viewed as an $S$-map from $S^7$ to $S^1$) with the identity
on $\cpq_+$.  Thus everything
reduces to computing $\Lambda|_{S^{7}_+\wedge\cpone}$ when $q\geq 2$.

As a first step, we show that the restriction of the Umkehr map to
$\Sigma(\cpone)$ is a generator of
$\{\Sigma(\cpone), S^0\}\cong
\pi_3(F)\cong\pi_3^{\bf S}\cong\mathbb Z_{24}$.
Indeed, the inclusion $S^3=\Sigma(\cpone)\to
\Sigma(\mathbb{CP}^\infty_+)$ represents a generator of
the $\mathbb Z$-factor in
\[
\pi_3^{\bf S}(\Sigma(\mathbb{CP}^\infty_+))=
\pi_3^{\bf S}(\Sigma(\mathbb{CP}^\infty)\vee S^1))=
\pi_3^{\bf S}(\Sigma(\mathbb{CP}^\infty))\oplus
\pi_3^{\bf S}(S^1)\cong\mathbb Z\oplus\mathbb Z_2.
\]
By the commutativity property of the diagram~\cite[(6.10)]{BecSch}
the homomorphism $\pi_3^{\bf S}(\Sigma(\mathbb{CP}^\infty_+))\to \pi_3^{\bf S}$
induced by the Umkehr map $p^{\,!}$ coincides with the forgetful map
$\pi_3(F_{S^1})\to \pi_3(F)$, and by~\cite[Theorem 11.1]{BecSch}
the image of $\pi_3(U)\to \pi_3(F_{S^1})$
induced by inclusion is an infinite cyclic summand. Thus the image
of $\pi_3^{\bf S}\left(\Sigma(\cpone)\right)$ is equal to the image of
a generator for $\pi_3(U)=\mathbb Z$
up to an element of order $\le 2$. Since the canonical homomorphism
from $\pi_3(U)$ to $\pi_3(O)$ is onto ({\it cf.\/} \cite{BotMil})
and similarly for $\pi_3(O)\to \pi_3(F)$ ({\it e.g.\/}, by
\cite{Ada-J-IV}), it
follows that either infinite cyclic generator must map to a
generator of $\pi_3(F)$.

The discussion in the preceding paragraph implies that the restriction of
$\Lambda=p^{\,!}\tinycirc\left(\Sigma\nu^2\wedge\mathbf{id}(\cpq_+)\,\right)$ to
$S^7\wedge\cpone\cong S^9$ is the composition of
$\Lambda|_{\Sigma\cpone}$, which we have shown to be a generator of
$\pi_3^{\bf S}\cong\mathbb Z_{24}$, and $\Sigma^3\nu^2\in
\{S^9,S^3\}\cong\pi_6^{\bf S}$.  Now $\pi_3^{\bf S}$ is generated
by $\nu$, and the third composition power
$\nu^3$ is nonzero in $\pi_9^{\bf S}=(\mathbb Z_2)^3$ \cite[Chapter XIV]{Tod}.
Since all nontrivial elements of the latter group have order $2$ and two
generators of $\mathbb Z_{24}$ differ by an even number mod 24, it follows that
the composition of $\nu^2$ with any generator of $\pi_3^{\bf S}$
is $\nu^3$, and this identifies $\Lambda|_{S^9}$ as an explicit nontrivial
element of $\pi_9^{\bf S}$.   Thus ${\mathfrak q}(f)$ restricted to
$\Sigma^7\cpone$, which is the image of $\Lambda|_{\Sigma^7\cpone}$ under the
canonical map from $\{\Sigma^7\cpone,S^0\}\cong[\Sigma^7\cpone,F]$ to
$[\Sigma^7\cpone,F/O]$,
is given by the image of $\nu^3$ in $\pi_9(F/O)$.

Finally, we need to verify that ${\mathfrak q}(f)|_{\Sigma^7\cpone}$ is nonzero.
The class $\Lambda|_{\Sigma^7\cpone}$ maps to ${\mathfrak q}(f)|_{\Sigma^7\cpone}$
via $\pi_9^{\bf S}\cong \pi_9(F)\to \pi_9(F/O)$, so it remains to show that
$\nu^3$ is not in the kernel of $\pi_9(F)\to \pi_9(F/O)$,
which equals the image of the $J$-homomorphism
$\pi_9(O)\to\pi_9(F)$.  For the sake of completeness, we indicate how
this can be verified.

Note that by~\cite[Theorem 1.2]{Ada-J-IV} the square of the Hopf map
$\eta^2$ induces a nonzero homomorphism
$\mathbb Z=\pi_{7}(O)\to\pi_{9}(O)=\mathbb Z_2$.
Thus if $\g$ denotes a generator of $\pi_{7}(O)$, then
$\g\eta^2$ generates $\pi_{9}(O)$. The $J$-homomorphism
$\pi_{7}(O)\to \pi_{7}(F)\cong\mathbb Z_{240}$
is onto, so $\pi_7(F)$ is generated by $J(\g)$ (which is
$\sigma$ in \cite{Tod}), and it follows
that the image of $J\co\pi_{9}(O)\to \pi_{9}(F)\cong
\mathbb Z_2\oplus\mathbb Z_2\oplus\mathbb Z_2$
is generated by $J(\g\eta^2)=\sigma\eta^2=\eta^2\sigma$.
Tables in~\cite[pp. 189--190]{Tod} imply that the image of
$\eta^2\co\pi_7(F)\to\pi_9(F)$ is generated by
$\eta^2\sigma=\nu^3+\eta\tinycirc\e$.  Therefore
$\nu^3$ is not in the image of the $J$-homomorphism.

\section{Dichotomy principles and skeletal filtrations}
\label{sec: dichotomy proof and skeleton filtrations}

We start by proving an important particular case of
Theorem~\ref{thm: main dichotomy}; of course, only
one direction is nontrivial.

\begin{prop}
\label{prop: dichotomy for pi7-monoid)}
For $q\ge 2$ let $f$ be a homotopy self-equivalence of
$S^7\times\cpq$ that comes from an element of
$\pi_7\left(\,F_{S^1}(\mathbb C^{q+1})\,\right)$.  Then $f$ is
homotopic to a diffeomorphism if and only if $f$ has trivial
normal invariant.
\end{prop}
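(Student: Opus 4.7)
The forward direction is immediate: a diffeomorphism has trivial normal invariant. For the converse, suppose $\alpha\in\pi_7(F_{S^1}(\mathbb C^{q+1}))$ corresponds to a self-equivalence $f$ of $S^7\times\cpq$ with trivial normal invariant, and let $s\co \pi_7(F_{S^1}(\mathbb C^{q+1}))\to\pi_7(F_{S^1})\cong\mathbb Z\oplus\mathbb Z_2$ be the stabilization map. By Section~\ref{sec: tang structure sets} the assignment $\alpha\mapsto f$ factors through the abelian group ${\bf S}_7^{s,t}(\cpq)$ followed by the doubling map $\Gamma$, so the resulting normal invariant is additive in $\alpha$. The plan is to classify $\alpha$ by its stable image and combine this additivity with the nontriviality computation of Section~\ref{sec: order 2 nontrivial normal inv} and the vanishing result of Corollary~\ref{cor: stably trivial in structure set}.

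I would first dispose of the case $q\geq 3$, where $s$ is an isomorphism. Write $\alpha=(n,m)\in\mathbb Z\oplus\mathbb Z_2$. The $\mathbb Z$-summand is the image of $\pi_7(U_{q+1})$, whose elements induce honest diffeomorphisms of $S^7\times\cpq$ and hence contribute $0$ to the normal invariant. By contrast, Section~\ref{sec: order 2 nontrivial normal inv} shows that the generator of the $\mathbb Z_2$-summand yields a self-equivalence whose normal invariant in $[S^7\times\cpq,F/O]$ is nonzero, detected by a class $\nu^3$ on $\Sigma\cpone$ that the final paragraph of that section verifies survives the forgetful map $[X,F]\to[X,F/O]$. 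By additivity, the vanishing of ${\mathfrak q}(f)$ forces $m=0$, i.e.\ $\alpha\in\pi_7(U_{q+1})$, and $f$ is homotopic to a diffeomorphism.

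For $q=2$, apply Proposition~\ref{prop: unstable homotopy group}: $\ker s_{3*}$ has order at most $2$ and $\mathrm{im}\,s_{3*}\cong\mathbb Z_2$. If $\alpha\in\ker s_{3*}$, Corollary~\ref{cor: stably trivial in structure set} directly gives that $\alpha$ is trivial in ${\bf S}^s(S^7\times\cptwo)$, so $f$ is homotopic to a diffeomorphism. Otherwise $s_{3*}(\alpha)$ is the unique order two element of $\pi_7(F_{S^1})$, and I claim ${\mathfrak q}(f)$ is nontrivial, contradicting the hypothesis. The naturality square underlying Corollary~\ref{cor: stably trivial in structure set} shows that the refined normal invariant of $f$ on $S^7\times\cptwo$ is the restriction, under $\cptwo\hookrightarrow\cpq$, of ${\mathfrak q}^t$ of the stabilized element on $S^7\times\cpq$ for any chosen $q\geq 3$. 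By Section~\ref{sec: order 2 nontrivial normal inv} the latter restricts nontrivially to $\Sigma\cpone\subset\Sigma\cpq$, and since $\cpone\subset\cptwo$ the intermediate restriction to $\Sigma\cptwo$ must also be nontrivial; the $J$-image computation of Section~\ref{sec: order 2 nontrivial normal inv} again shows that this nontriviality survives the passage to $F/O$.

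The main technical subtlety is the bookkeeping of naturality so that the Section~\ref{sec: order 2 nontrivial normal inv} computation, carried out on $S^7\times\cpq$ for $q\geq 3$, descends faithfully to $q=2$. The saving observation is precisely that the nontriviality of the stable order two element is detected on the universal one-dimensional subcomplex $\Sigma\cpone$, which embeds into every $\Sigma\cpq$ for $q\geq 1$; everything else is routine comparison of surgery exact sequences.
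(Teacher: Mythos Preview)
Your proposal is correct and follows essentially the same route as the paper's proof: for $q\ge 3$ you split off the $\pi_7(U_{q+1})$-summand (which yields diffeomorphisms) and invoke Section~\ref{sec: order 2 nontrivial normal inv} for the $\mathbb Z_2$-factor, and for $q=2$ you use Proposition~\ref{prop: unstable homotopy group} together with Corollary~\ref{cor: stably trivial in structure set} and the naturality diagram there to pull the $\Sigma\cpone$-detection of Section~\ref{sec: order 2 nontrivial normal inv} back from the stable range. The only organizational difference is that the paper decomposes $f=f_1\tinycirc f_2$ and appeals (implicitly) to the composition formula for normal invariants, whereas you argue additivity directly via the factorization $\pi_7(F_{S^1}(\mathbb C^{q+1}))\to{\bf S}_7^{s,t}(\cpq)$ through the relative tangential structure set where all maps are homomorphisms; both devices accomplish the same thing.
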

\begin{proof}
Suppose first that $q\ge 3$ so that
$\pi_7(F_{S^1}(\mathbb C^{q+1}))\cong \pi_7(U_{q+1})\oplus \mathbb Z_2$.
We write $f={f_1}{\tinycirc}{f_2}$ where $f_1$
comes from the $\pi_7(U_{q+1})$-factor and $f_2$ comes from
the $\mathbb Z_2$-factor.
Then $f_1$ is homotopic to a diffeomorphism,
because $U_{q+1}$ acts on $\cptwom$ by
diffeomorphisms.  By the discussion in
Section~\ref{sec: order 2 nontrivial normal inv}
either $f_2$ is homotopic to identity, or $f_2$ has nontrivial
normal invariant, and the claim follows.

Suppose now that $q=2$.
By Proposition~\ref{prop: unstable homotopy group} the map
$f$ is homotopic to the composition ${f_2}{\tinycirc}{f_1}$
of homotopy self-equivalence $f_1$, $f_2$, where each factor
has order at most $2$, the map $f_1$ comes from an element
in the kernel of $\pi_7(F_{S^1}(\mathbb C^3))\to\pi_7(F_{S^1})$,
and $f_2$ is either homotopic to identity, or else
comes from an element that is mapped to the
unique order $2$ element of $\pi_7(F_{S^1})$.
Corollary~\ref{cor: stably trivial B} implies that
$f_1$ is homotopic to a diffeomorphism.
If $f_2$ is homotopic to the identity we are done, so
suppose that $f_2$ is not homotopic to the identity.  In the
latter case the proof in
Section~\ref{sec: order 2 nontrivial normal inv} shows that
the restriction of ${\mathfrak q}(f_2)$ to $\Sigma^7(\cpone)
\vee S^7$ is nontrivial, and since $f_1$ is homotopic to a
diffeomorphism the same also holds for
${\mathfrak q}(f_2\tinycirc f_1 = f)$.
\end{proof}

The next step is to prove a Dichotomy Property for
normal invariants
of homotopy self-equivalences coming from maps
$\cpq\to E_1(S^k)=SG_{k+1}$.

\begin{prop}
\label{prop: dichotomy for maps in Gk}
Let $X$ be a closed connected smooth $n$-manifold, let
$k\geq 2$ with $n+k\geq 5$, let $u:X\to SG_{k+1}$
be continuous, and let $f\co S^k\times X\to S^k\times X$
denote the homotopy self-equivalence arising from $u$.
Then either $f$ is homotopic to a diffeomorphism, or else
$f$ is not normally cobordant to the identity.  In the
first case, the diffeomorphism extends to a diffeomorphism
of $D^{k+1}\times X$.\end{prop}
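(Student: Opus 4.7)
The plan is to extend $f$ to $V:=D^{k+1}\times X$ via the fiberwise cone construction and apply absolute surgery theory on $V$, exploiting that for $k\geq 2$ the inclusion $\partial V\hookrightarrow V$ induces an isomorphism $\pi_1(\partial V)=\pi_1(X)=\pi_1(V)$, so the relative Wall groups vanish and $\mathbf{S}^s(V)$ embeds into $[V,F/O]$.

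First I would construct the extension. Each self-equivalence in $SG_{k+1}=E_1(S^k)$ extends canonically via radial coning to a self-map of $D^{k+1}$ fixing $0$, yielding $\hat{u}\co X\to\mathrm{Map}_\ast(D^{k+1},D^{k+1})$ whose adjoint is a self-map $\hat{f}\co V\to V$ given by $\hat{f}(z,x)=(\hat{u}(x)(z),x)$. By construction $\hat{f}$ preserves the boundary, restricts there to $f$, and fixes $\{0\}\times X$ pointwise. Since $\hat{u}(x)$ is homotopic to $\mathbf{id}_{D^{k+1}}$ through origin-fixing self-maps (straight-line homotopy in the convex disk $D^{k+1}$), continuously in $x$, the map $\hat{f}$ is homotopic to $\mathbf{id}_V$ as a self-map of $V$; combined with the fact that $\hat{f}|_{\partial V}=f$ is already a homotopy equivalence, this makes $\hat{f}\co(V,\partial V)\to(V,\partial V)$ a simple homotopy equivalence of pairs, so $(V,\hat{f})$ represents a class in $\mathbf{S}^s(V)$.

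Next I would compare normal invariants. The inclusion $\{0\}\times X\hookrightarrow V$ is a homotopy equivalence identifying $[V,F/O]$ with $[X,F/O]$, and the composition $\partial V\hookrightarrow V\simeq\{0\}\times X$ coincides (up to homotopy) with the projection $p_2\co S^k\times X\to X$. Under these identifications the restriction map $[V,F/O]\to[\partial V,F/O]$ becomes $p_2^\ast\co[X,F/O]\to[S^k\times X,F/O]$, which is a split monomorphism because any slice inclusion $j(X)\co X\to S^k\times X$ is a section of $p_2$. By naturality of the normal invariant, $q(f)$ equals the restriction of $q(\hat{f})$ to $\partial V$, hence corresponds to $p_2^\ast q(\hat{f})$; since $p_2^\ast$ is injective, the hypothesis $q(f)=0$ forces $q(\hat{f})=0$.

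Finally, invoking the Sullivan--Wall surgery exact sequence for $(V,\partial V)$ and the vanishing of $L^s_{n+k+2}(\pi_1(V),\pi_1(\partial V))$ gives injectivity of $\mathbf{S}^s(V)\to[V,F/O]$, so $(V,\hat{f})$ is the base point of $\mathbf{S}^s(V)$. By the definition of equivalence of simple homotopy structures, there is a diffeomorphism $h\co(V,\partial V)\to(V,\partial V)$ with $h\simeq\hat{f}$ through a homotopy of maps of pairs. Restricting the homotopy to $\partial V$ gives $f\simeq h|_{\partial V}$, where $h|_{\partial V}$ is a diffeomorphism of $S^k\times X$ that extends (by $h$) to a diffeomorphism of $D^{k+1}\times X$, which proves both assertions since the converse direction (a diffeomorphism is normally cobordant to the identity) is immediate. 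The most delicate step is identifying the restriction map on normal invariants with $p_2^\ast$, which requires chasing the identification of $V$ with $X$ via the slice $\{0\}\times X$ rather than via a section of the projection.
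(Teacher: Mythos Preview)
Your proof is correct and follows essentially the same approach as the paper: extend $f$ to $\hat f$ on $V=D^{k+1}\times X$ via the fiberwise cone, use that the restriction $[V,F/O]\to[S^k\times X,F/O]$ is the split monomorphism $p_2^\ast$ to transfer triviality of the normal invariant from $f$ to $\hat f$, and then invoke Wall's $\pi$--$\pi$ theorem (equivalently, vanishing of the relative Wall groups since $k\ge 2$ makes $\pi_1(\partial V)\to\pi_1(V)$ an isomorphism) to conclude that $\hat f$ is homotopic to a diffeomorphism of pairs. Your write-up is slightly more explicit than the paper's in justifying that $\hat f$ is a simple homotopy equivalence of pairs and in tracking the identification of the restriction map with $p_2^\ast$, but the argument is the same.
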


\begin{proof}  A key point is that
every homotopy self-equivalence of $S^k$ extends to
$D^{k+1}$ by the cone construction, which implies that
$f$ extends to a  homotopy self-equivalence $\bar f$ of
$D^{k+1}\times X$ and hence yields a homotopy structure
on $D^{k+1}\times X$.  By Wall's $\pi-\pi$
Theorem~\cite[Chapter 3]{Wal-book}, the map $\bar f$ is homotopic to a
diffeomorphism if and only if its normal invariant is
trivial.  The restriction map
\begin{equation}
\label{form: restriction}
[X,F/O]\cong [D^{k+1}\times X,F/O]~\longrightarrow~
[S^k\times X,F/O]
\end{equation}
is split injective, {\it i.e.\/}
if $G\co D^{k+1}\times X\to F/O$ restricts to
$g\co S^k \times X\to F/O$, then $g\vert_{\{\ast\}\times X}$
corresponds to $G$ under $[D^{k+1}\times X,F/O]\cong [X,F/O]$.
By the geometric definition of normal invariant, ${\mathfrak q}(\bar f)$
maps to ${\mathfrak q}(f)$ via
restriction to the boundary, and therefore, by the previous sentence,
${\mathfrak q}(f)$ maps to ${\mathfrak q}(\bar f)$
by restriction to ${\{\ast\}\times X}$.
It follows that ${\mathfrak q}(f)$ is trivial if and only if
${\mathfrak q}(\bar f)$ is trivial.
If it is trivial, then Wall's $\pi-\pi$ Theorem
implies that $\bar f$, and hence $f$ is homotopic to a diffeomorphism.
\end{proof}

One step in the preceding argument is important enough
to be stated explicitly: The normal invariant of $f$
lies in the image of $[X,F/O]$ in
$[S^k\times X, F/O]$ under the restriction map (\ref{form: restriction}).
We shall need a strengthened
form of this result.

\begin{cor}
\label{cor: normal invariant rel A}
Under the assumptions of
Proposition~\ref{prop: dichotomy for maps in Gk}, if $A$ is a
subcomplex in some triangulation of $X$ and if
the restriction $u\vert A$
is trivial in $[A,SG_{n+1}]$, then the restrictions
of ${\mathfrak q}(f)$ to $A$ and $S^k\times A$ are
also trivial.\end{cor}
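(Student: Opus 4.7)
My plan is to reuse the extension argument from the proof of Proposition~\ref{prop: dichotomy for maps in Gk} while tracking how the normal invariant behaves over $A$. Since $f$ arises from $u\colon X\to SG_{k+1}$, coning every self-equivalence of $S^k$ to a self-equivalence of $D^{k+1}$ promotes $u$ to a map $\hat u\colon X\to E_1(D^{k+1})$ whose adjoint is a homotopy self-equivalence $\hat f$ of $D^{k+1}\times X$ extending $f$. The proof of the proposition shows that $\mathfrak{q}(f)$ equals $\pi^{\ast}\mathfrak{q}(\hat f)$, where $\pi\colon S^k\times X\to X$ is the projection and $\mathfrak{q}(\hat f)$ is regarded as a class in $[X,F/O]$ via the isomorphism $[D^{k+1}\times X,F/O]\cong [X,F/O]$ induced by projection onto $X$.

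Next, I would argue that a null-homotopy of $u|_A$ in $[A,SG_{k+1}]$ produces, by functoriality of coning, a null-homotopy of $\hat u|_A$ in $[A,E_1(D^{k+1})]$. Taking adjoints, this yields a homotopy from $\hat f|_{D^{k+1}\times A}$ to the inclusion $D^{k+1}\times A\hookrightarrow D^{k+1}\times X$. The geometric description of the normal invariant (the stable bundle difference $\tau_{D^{k+1}\times X}-\hat f^{-1\ast}\tau_{D^{k+1}\times X}$ in $[D^{k+1}\times X,F/O]$, compare Section~\ref{sec: tang structure sets}) then forces $\mathfrak{q}(\hat f)$ to restrict trivially to $D^{k+1}\times A$, since over that subspace the pulled-back bundle is canonically identified with the standard one. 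Via the projection isomorphism this says exactly that $\mathfrak{q}(\hat f)|_A=0$ in $[A,F/O]$.

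Combining the two steps gives
$\mathfrak{q}(f)|_{S^k\times A}=(\pi|_{S^k\times A})^{\ast}\bigl(\mathfrak{q}(\hat f)|_A\bigr)=0$,
and further restriction along $A\hookrightarrow\{\ast\}\times A\subset S^k\times A$ yields $\mathfrak{q}(f)|_A=0$.

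The main subtlety to be careful about is the naturality of the normal invariant with respect to restriction: one must check that the class of $\mathfrak{q}(\hat f)$ in $[D^{k+1}\times X,F/O]$ really does restrict to zero over any subcomplex on which $\hat f$ is homotopic to the inclusion. This is essentially automatic from the stable bundle description, but it is the only place where geometric input beyond the proof of Proposition~\ref{prop: dichotomy for maps in Gk} is used; everything else is a bookkeeping exercise in adjunctions and the functoriality of coning.
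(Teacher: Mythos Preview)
The gap is in your justification for why $\mathfrak{q}(\hat f)$ restricts trivially to $D^{k+1}\times A$. You describe the normal invariant as ``the stable bundle difference $\tau_{D^{k+1}\times X}-\hat f^{-1\ast}\tau_{D^{k+1}\times X}$ in $[D^{k+1}\times X,F/O]$,'' but a difference of tangent bundles is a class in $[D^{k+1}\times X,BO]$; it is only the \emph{image} of $\mathfrak{q}(\hat f)$ under the map induced by $F/O\to BO$ (this is exactly what Section~\ref{sec: tang structure sets} says). Knowing that $\hat f|_{D^{k+1}\times A}$ is homotopic to the inclusion does force this $BO$-image to vanish over $A$, but since $[A,F/O]\to[A,BO]$ has kernel receiving the image of $[A,F]$, you cannot conclude $\mathfrak{q}(\hat f)|_A=0$ from bundle data alone. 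So the step you flag as ``essentially automatic from the stable bundle description'' is precisely where the argument breaks.

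The paper's proof sidesteps this entirely, and works directly on $S^k\times X$ with no coning. It uses the Homotopy Extension Property to replace $u$ by a homotopic map that is constant (with value the identity) on a closed regular neighborhood $B$ of $A$. The resulting self-equivalence $g$ of $S^k\times X$ is then \emph{literally the identity} on the codimension-zero submanifold $S^k\times B$ and maps its complement to itself; the standard restriction property of normal invariants for codimension-zero submanifolds then gives $\mathfrak{q}(g)|_{S^k\times B}=\mathfrak{q}(\mathrm{id}_{S^k\times B})=0$, hence $\mathfrak{q}(f)|_{S^k\times A}=0$. Your route can be repaired by inserting this regular-neighborhood step after the null-homotopy of $\hat u|_A$, but once you do that the detour through $D^{k+1}\times X$ is no longer doing any work.
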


\begin{proof}
If $B$ is a closed regular neighborhood of $A$, then
by the Homotopy Extension Property
we may replace $u$ with some $v$ in the same homotopy
class such that the restriction of $v$ to $B$ is
constant with value ${\bf id}(X)$.  Let $F$ be the homotopy
self-equivalence of $S^k\times X$ that corresponds to $v$.
Then $F$ maps $S^k\times B$ to itself by the identity.
Set $M_2:=S^k\times B$ and $M_1:= X\setminus\mathrm{Int}(M_2)$.
By Proposition~\ref{prop: restriction of normal invariants}
the collapsing map $X\to X/M_2$ takes
${\mathfrak q}(F\vert {M_1})$ to ${\mathfrak q}(F)$.
By exactness of the cofiber sequence
$[X/M_2,F/O]\to [X,F/O]\to [M_2,F/O]$
the restriction of ${\mathfrak q}(F)$ to $M_2$ is trivial,
and hence the same holds for every subspace of $M_2$,
such as $A$ and $S^k\times A$.
\end{proof}

Now that we have the Dichotomy Property for homotopy self-equivalences
coming from elements in $\pi_7(E_1(\cpq))$ and $[\cpq, E_1(S^7)]$
we only need to see whether their normal invariants can cancel,
so that the normal invariant of the composition
of these homotopy self-equivalences cannot be trivial unless
both summands vanish. This matter is naturally treated
in the framework of skeletal filtrations.

Let ${\bf T}$ be a contravariant functor defined from
the homotopy category of pointed finite cell complexes to
the category
of abelian groups.  If $X$ is a pointed finite cell complex,
then we say that
a class $u\in {\bf T}(X)$ {\sl has skeletal
filtration} $\geq k$,  if the restriction of $u$ to the
$k$-skeleton $X_k$ is trivial, and we say that the
{\it skeletal filtration of $u$ equals} $k$ if $u$
has filtration $\geq k$ but does not have
filtration $\geq k+1$.
The Cellular Approximation Theorem for continuous
maps of $CW$-complexes implies
that the skeletal filtration of a class in ${\bf T}(X)$ does not
depend upon the choice of cell decomposition; in fact, it
follows that the sets ${\bf T}^{\langle k\rangle}(X)$ of elements
with skeletal filtration $\geq k$ are subgroups and define
a filtration of ${\bf T}$ by subfunctors.

\begin{prop}
\label{prop: even filtration}
Suppose that $f$ is a homotopy self-equivalence
of $S^7\times \cpq$ with $q\geq 2$, which comes from an
element of $[\cpq,SG_8]$.  If the normal invariant
of $f$ is nontrivial, then its filtration is an even
number.
\end{prop}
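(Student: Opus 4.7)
The plan is to exhibit the normal invariant of $f$ as a pullback from $\cpq$ via Proposition~\ref{prop: dichotomy for maps in Gk}, then use Lemma~\ref{lem: cp2, G8} together with Corollary~\ref{cor: normal invariant rel A} to bound the skeletal filtration from below, and finally to invoke the even cell structure of $\cpq$ to establish the parity.

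Writing $f=\alpha_{\cpq}(u)$ for some $u\in [\cpq,SG_8]$, the first step is to apply Proposition~\ref{prop: dichotomy for maps in Gk}: the map $f$ extends via the fiberwise cone construction to a homotopy self-equivalence $\hat f$ of $D^{8}\times\cpq$, and the split injectivity of the restriction map $[D^{8}\times\cpq,F/O]\cong [\cpq,F/O]\to [S^{7}\times\cpq,F/O]$ yields the identity ${\mathfrak q}(f)=p(\cpq)^{\ast}B$, where $p(\cpq):S^{7}\times\cpq\to\cpq$ is the projection and $B\in [\cpq,F/O]$ is the class corresponding to ${\mathfrak q}(\hat f)$ (equivalently, $B={\mathfrak q}(f)|_{\{x_{0}\}\times\cpq}$).

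Next, to obtain the lower bound filtration $\geq 4$, I invoke Lemma~\ref{lem: cp2, G8}: since $[\cptwo,SG_{8}]=0$, the restriction $u|_{\cptwo}$ is null-homotopic. Applying Corollary~\ref{cor: normal invariant rel A} with $A=\cptwo\subset\cpq$ gives that ${\mathfrak q}(f)$ restricts trivially to $S^{7}\times\cptwo$; since the $4$-skeleton of $S^{7}\times\cpq$ sits inside $\{x_{0}\}\times\cptwo\subset S^{7}\times\cptwo$, this yields filtration $\geq 4$.

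For the parity claim, the key structural input is the pullback formula ${\mathfrak q}(f)=p(\cpq)^{\ast}B$ combined with the fact that $\cpq$ has cells only in even dimensions. Cells of $S^{7}\times\cpq$ occur at dimensions $2j$ (for $0\leq j\leq q$) and $7+2j$ (for $0\leq j\leq q$); under $p(\cpq)$ both kinds of cells map onto $\cpq$-cells of even dimension, so the restriction of $p(\cpq)^{\ast}B$ to the $k$-skeleton of $S^{7}\times\cpq$ is detected entirely by the restriction of $B$ to $\cpq_{2\lfloor k/2\rfloor}$. A cell-by-cell analysis of this correspondence, together with the lower bound from the previous step, should force the filtration of ${\mathfrak q}(f)$ into the claimed even values $\geq 4$. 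The hard part will be this final combinatorial parity argument: one must carefully track how the mixed cells $(e^{7},e^{2j})$ contribute to the skeletal filtration of $p(\cpq)^{\ast}B$ and confirm that they cannot produce an odd filtration value below the first genuinely new contribution from an even-dimensional cell of $\cpq$.
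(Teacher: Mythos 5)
Your proposal follows the paper's own route: the paper likewise passes to the cone extension on $D^8\times\cpq$ so that the normal invariant is pulled back from $[\cpq,F/O]$, obtains the bound $\ge 4$ from Lemma~\ref{lem: cp2, G8} via Corollary~\ref{cor: normal invariant rel A}, and deduces evenness from the fact that $\cpq$ has cells only in even dimensions. The one step you flag as ``the hard part'' is already contained in what you wrote and needs no cell-by-cell bookkeeping: since $p(\cpq)$ is cellular, the restriction of ${\mathfrak q}(f)=p(\cpq)^{\ast}B$ to the $k$-skeleton of $S^7\times\cpq$ factors through $B\vert_{\cpq_k}$ and hence vanishes whenever $B\vert_{\cpq_k}$ does, while restricting further to the slice $\{x_0\}\times\cpq_k$ recovers $B\vert_{\cpq_k}$; therefore the skeletal filtration of ${\mathfrak q}(f)$ equals that of $B$, which is even because the skeleta of $\cpq$ change only in even dimensions, and the mixed cells $e^7\times e^{2j}$ never contribute.
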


\begin{proof} In view of the proof of
Proposition~\ref{prop: dichotomy for maps in Gk},
we might as well consider the normal invariant for the
homotopy self-equivalence of $D^8\times \cpq$ extended
via the cone construction, which lies in
$[D^8\times\cpq , F/O) ]\cong [\cpq, F/O]$.
Since $\cpq$ has cells only in even dimensions, it
follows that the filtration of a nontrivial element
must be even.
\end{proof}

\begin{rmk}
\label{rmk: existence of self-eq with stably nontrivial norm inv}
On the other hand, if $f$ is a homotopy self-equivalence
of $S^7\times \cpq$ with $q\geq 2$ that comes from an
element of $\pi_7(E_1(\cpq))$, and if ${\mathfrak q}(f)$ is nontrivial,
then the skeletal filtration of ${\mathfrak q}(f)$ is odd.
Indeed, as in the proof of Proposition~\ref{prop: factorization}
we may assume that $f$ is the identity on $S^7\vee \cpq$; hence
${\mathfrak q}(f)$ can be thought of
as an element of $[S^7\wedge \cpq, F/O]$.
Thus if ${\mathfrak q}(f)$ is nontrivial,
then the filtration of ${\mathfrak q}(f)$ is odd because
$S^7\wedge\cpq$ has a cell decomposition
(inherited from the product of the standard
cell decomposition of $\cpq$ and $S^k=D^k\cup D^0$)
whose positive dimensional cells
only appear in odd dimensions from $9$ to $2q+7$.
In fact, the $9$-skeleton of $S^7\wedge \cpq$ is
$S^7\wedge\cpone=S^9$, and it was shown in
Section~\ref{sec: order 2 nontrivial normal inv} that
the restriction of ${\mathfrak q}(f)$ to $S^7\wedge\cpone$
defines a nontrivial element of $\pi_9(F/O)$,
so the filtration of ${\mathfrak q}(f)$ is $9$.
\end{rmk}

\begin{proof}[Proof of Theorem~\ref{thm: main dichotomy}]
By Propositions~\ref{prop: product cpq times sk},
\ref{prop: factorization}, a homotopy self-equivalence $f$
of $S^7\times \cptwo$ is homotopic to
${f_1}\tinycirc {f_2}\tinycirc \phi$
where $\phi$ is a diffeomorphism
and $f_1$, $f_2$ are homotopy self-equivalences coming
from elements in $\pi_7(E_1(\cpq))$, $[\cpq, E_1(S^7)]$,
respectively.
The composition formula for normal invariants says that
\[
{\mathfrak q}(f)={\mathfrak q}({f_1}\tinycirc f_2)=
{\mathfrak q}(f_1)+(f_1)^{\ast -1}{\mathfrak q}(f_2).
\]
By Propositions~\ref{prop: dichotomy for maps in Gk}--\ref{prop: even filtration}
either $f_1$ is homotopic to a diffeomorphism, or
else the filtration of ${\mathfrak q}(f_1)$ is odd.
Similarly, by Proposition~\ref{prop: dichotomy for pi7-monoid)}
either $f_2$ is homotopic to
a diffeomorphism, or else its filtration is
even; general considerations then imply the same conclusion for
$(f_1^*)^{-1}{\mathfrak q}(f_2)$.
Therefore, if ${\mathfrak q}(f)$ is trivial, then
both ${\mathfrak q}(f_1)$ and ${\mathfrak q}(f_2)$ are
trivial, and hence $f_1$, $f_2$ are homotopic to diffeomorphisms,
so $f$ is homotopic to a diffeomorphism.
\end{proof}

\section{The homotopy inertia group of $S^7\times\cptwo$}
\label{sec: index 4}

Here we obtain an optimal strengthening of Taylor's
result (Theorem~\ref{thm: taylor}) for $M=S^7\times\cptwo$.

\begin{thm}
\label{thm: index 4 in inertia}
The subgroup $I_h(S^7\times\cptwo)\cap bP_{12}$ has
index $4$ in $bP_{12}$. The manifolds
$\Sigma(d)\times\cptwo$
fall into $3$ diffeomorphism types, and
$4$ oriented diffeomorphism types.
\end{thm}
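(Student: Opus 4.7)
Let $M=S^7\times\cptwo$ and set
\[
K := \{\,d\in\mathbb{Z} : \Sigma^{11}(d)\in I_h(M)\,\}.
\]
Since the map $d\mapsto\Sigma^{11}(d)$ sends $\mathbb{Z}$ onto $bP_{12}$, the index $[bP_{12}:I_h(M)\cap bP_{12}]$ equals $|\mathbb{Z}/K|$. By Corollary~\ref{cor: diffeo criterionfor sigma7(d)} the oriented diffeomorphism classes of $\Sigma^7(d)\times\cptwo$ correspond bijectively to $\mathbb{Z}/K$, and by Remark~\ref{rmk: non-orient diffeo} the unoriented classes correspond to the orbits of the involution $d\mapsto -d$ on $\mathbb{Z}/K$. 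All three conclusions of the theorem therefore reduce to the single claim $K=4\mathbb{Z}$: indeed, four residues give four oriented classes, and the three orbits $\{0\}$, $\{\pm 1\}$, $\{2\}$ of the involution on $\mathbb{Z}/4\mathbb{Z}$ yield three unoriented classes.

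For the inclusion $4\mathbb{Z}\subseteq K$, I would apply Lemma~\ref{lem: inertia vs divisible by 3} with $m=1$: since $1$ is not divisible by $3$, that lemma produces a diffeomorphism $\Sigma^7(4)\times\cptwo\cong S^7\times\cptwo$, and Corollary~\ref{cor: diffeo criterionfor sigma7(d)} turns this into the statement $\Sigma^{11}(4)\in I_h(M)$. Together with the symmetry $\Sigma^{11}(d)\in I_h(M)\Leftrightarrow\Sigma^{11}(-d)\in I_h(M)$ this gives $4\mathbb{Z}\subseteq K$ at once.

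For the reverse inclusion $K\subseteq 4\mathbb{Z}$, Theorem~\ref{intro-thm: CPq cancells} already rules out odd elements of $K$, so $K\subseteq 2\mathbb{Z}$ and only the coset $2+4\mathbb{Z}$ remains to be excluded, i.e.\ it suffices to show $\Sigma^{11}(2)\notin I_h(M)$. By Fact~\ref{fact: browder}, $M\#\Sigma^{11}(2)$ is diffeomorphic to $\Sigma^7(2)\times\cptwo$, so $\Sigma^{11}(2)\in I_h(M)$ would supply a diffeomorphism $\psi\co M\#\Sigma^{11}(2)\to M$ whose composite with the standard connect-sum homeomorphism $h\co M\to M\#\Sigma^{11}(2)$ is homotopic to $\mathbf{id}(M)$. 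Setting $f=\psi\tinycirc h$, the Dichotomy Principle (Theorem~\ref{thm: main dichotomy}) reduces the problem to showing that $\mathfrak{q}(f)\in[M,F/O]$ is nontrivial for every such $\psi$. I would compute $\mathfrak{q}(f)$ by lifting to the refined (tangential) normal invariant in $[M,F]$ via the machinery of Section~\ref{sec: tang structure sets}: because $\psi$ is a diffeomorphism, $\mathfrak{q}^t(f)$ is, up to the action of $\psi^{*-1}$, the refined normal invariant of $h$, and the latter is determined by the smoothing obstruction of the connect-sum identification, equal to the Eells--Kuiper invariant $\mu(\Sigma^{11}(2))=1/496\pmod{1}$ transported to a class in $\pi_{11}(F)\subset[M,F]$. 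Since this class survives nontrivially to $[M,F/O]$, the map $f$ cannot be null-homotopic.

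The main obstacle is this last step. The input of Sections~\ref{sec: surgery basics}--\ref{sec: dichotomy proof and skeleton filtrations} only yields the index $\geq 2$ bound in line with Taylor's Theorem~\ref{thm: taylor}; the refinement to index $4$ requires genuinely smooth (rather than homotopy-theoretic) information, namely the identification of the refined normal invariant of the standard connect-sum homeomorphism with the $2$-primary Eells--Kuiper class and the verification that this detects the factor of $2$ in $bP_{12}\cong\mathbb{Z}/992$ even though $M$ itself is not spin (so the classical $\mu$-invariant is not defined on $M$ directly and must be handled through a $\mathrm{Spin}^c$-refined bordism computation).
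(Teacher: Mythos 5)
Your reduction to the claim $K=4\mathbb Z$ and your proof of the inclusion $4\mathbb Z\subseteq K$ (Lemma~\ref{lem: inertia vs divisible by 3} with $m=1$, plus Corollary~\ref{cor: diffeo criterionfor sigma7(d)}) coincide with the paper's argument, and ruling out odd $d$ via Theorem~\ref{intro-thm: CPq cancells} is also fine. The gap is in the one step that carries the real content of the theorem, namely $\Sigma^{11}(2)\notin I_h(M)$, and the mechanism you propose cannot work even in principle. The standard connected-sum homeomorphism $M\,\#\,\Sigma^{11}(2)\to M$ represents $\Delta(2)$, i.e.\ it lies in the image of the Wall-group action, so by exactness of the surgery sequence its normal invariant in $[M,F/O]$ is trivial --- this is exactly how Fact~\ref{fact: browder} and Corollary~\ref{cor: diffeo criterionfor sigma7(d)} are set up. Elements of $bP_{12}$ are therefore invisible to normal invariants, and no computation, refined or not, can make the class you describe ``survive nontrivially to $[M,F/O]$''. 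In your own formulation the problem is even more immediate: if $\Sigma^{11}(2)\in I_h(M)$, then $f=\psi\tinycirc h$ is homotopic to $\mathbf{id}(M)$ by assumption, so $\mathfrak q(f)=0$ tautologically and Theorem~\ref{thm: main dichotomy} produces no contradiction; and if you only assume some diffeomorphism $M\,\#\,\Sigma^{11}(2)\cong M$ exists, the composition formula again forces $\mathfrak q(\psi\tinycirc h)=0$ because $\mathfrak q$ of the standard homeomorphism vanishes. The Dichotomy Principle is precisely the tool that converts ``diffeomorphic'' into ``$\Sigma^{11}(2)\in I_h(M)$''; it cannot also be used to exclude the latter.

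What closes the argument in the paper is smooth input of a different nature: the upper bound on $I_h(M)\cap bP_{12}$ comes from the mechanism of the proof of Taylor's Theorem~\ref{thm: taylor} in~\cite{Sch87}, which turns a hypothetical homotopy from the standard homeomorphism to a diffeomorphism into a stably fiber homotopically trivial vector bundle over the suspension of $S^7\times\cptwo$, together with a strengthening of~\cite[Sublemma 2.3]{Sch87}: for such a bundle $\xi$ the class $p_m(\xi)$ is divisible by $2j_{4m}$ rather than $j_{4m}$. That lemma is proved by splitting the suspension of a product into a wedge, Bott's divisibility theorem, and diagram chases with the $J$-homomorphism over $S^8$, $S\cptwo$ and $S^8\cptwo$; the extra factor of $2$ is exactly what improves Taylor's index bound from $2$ to $4$. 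Your closing remark correctly senses that genuinely smooth (non-surgery-theoretic) information is needed, but the Eells--Kuiper/$\mathrm{Spin}^c$ sketch is not developed and rests on the false claim about the normal invariant of the connect-sum homeomorphism, so the decisive step of the theorem is missing.
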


It follows from Corollary~\ref{cor: diffeo criterionfor sigma7(d)}
and Remark~\ref{rmk: non-orient diffeo} that
the first sentence in Theorem~\ref{thm: index 4 in inertia}
implies the second one, and this proves
Theorem~\ref{intro-thm: CP2 cancells}.

The inclusion $4\cdot bP_{4r}\subset I_h(S^7\times\cptwo)
\cap bP_{4r}$ is a general phenomenon arising from
the product formula for the surgery obstruction and certain
numerical properties of orders of groups $bP_{4r}$
which we denote $|bP_{4r}|$.
The following lemma generalizes an argument
in~\cite[(6.5)]{Bro-surv} given for $m=1$.

\begin{lem}
\label{lem: inertia vs divisible by 3}
If $m$ is not divisible by $3$, then
$4\cdot\Sigma^{4m+7}(1)=\Sigma^{4m+7}(4)\in
I_h(S^7\times\cptwom)$, and
the manifolds $\Sigma^{4m+7}(d)\times\cptwom$
fall into at most $3$ diffeomorphism types, and at most
$4$ oriented diffeomorphism types.
\end{lem}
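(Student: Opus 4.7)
The idea is to reduce the inertia statement to a surgery obstruction computation and then to deduce the enumeration of diffeomorphism types by elementary group theory in $bP_{4m+8}$. Let $M=S^7\times\cptwom$. By Fact~\ref{fact: browder}, $\Sigma^7(d)\times\cptwom$ is diffeomorphic to $M\,\#\,\Sigma^{4m+7}(d)$, so the first claim reduces to $\Delta(4)=0$ in ${\bf S}^s(M)$; by exactness of the Sullivan--Wall surgery sequence in Section~\ref{sec: surgery basics} this is equivalent to exhibiting a class in $[\Sigma M_+,F/O]$ whose image under the surgery obstruction map equals $4\in L^s_{4m+8}(1)=\mathbb Z$.

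Such a class is built from a stably fibre homotopy trivial rank-$8$ real vector bundle $\xi$ over $\cptwom$: by~\eqref{eq: fibration F/O} the class $\xi-\epsilon^8\in\widetilde{KO}(\cptwom)$ lifts to $[\cptwom,F/O]$ precisely when its image in $[\cptwom,BF]$ vanishes. Pulling this lift back along the projection $M\to\cptwom$ and pairing it, via the suspension, with the standard generator of $\pi_8(F/O)$ on the $S^7$ factor yields the desired element of $[\Sigma M_+,F/O]$. The Morgan--Sullivan product formula then expresses the surgery obstruction as
\[
\sigma=\bigl\langle\mathcal{L}(\cptwom)\cdot\mathrm{ph}(\xi-\epsilon^8),\,[\cptwom]\bigr\rangle
\]
up to normalisation, an integer controlled by the Pontryagin classes $p_k(\cptwom)=\binom{2m+1}{k}x^{2k}$ together with the Adams integrality constraints defining $\ker J\subset\widetilde{KO}(\cptwom)$. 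The Bernoulli-number denominators entering those constraints carry a factor of $3$, and a suitable choice of $\xi$ realises $\sigma=\pm 4$ precisely when $3\nmid m$; this gives $\Sigma^{4m+7}(4)\in I_h(M)$.

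For the enumeration, note that for $r\ge 2$ the order $|bP_{4r}|$ is divisible by $4$, so $I_h(M)\cap bP_{4m+8}\supset\langle\Sigma^{4m+7}(4)\rangle$ has index at most $4$ in $bP_{4m+8}$. Corollary~\ref{cor: diffeo criterionfor sigma7(d)} then yields at most $4$ orientation-preserving diffeomorphism types among the manifolds $\Sigma^7(d)\times\cptwom$, while Remark~\ref{rmk: non-orient diffeo} identifies $d$ with $-d$ and collapses the four cosets $\{0,1,2,3\}\bmod 4$ to the three unoriented classes $\{0\},\{1,3\},\{2\}$. The hardest part will be the explicit surgery-obstruction computation: identifying $\sigma$ with an arithmetic expression in Pontryagin classes of $\cptwom$ and $J$-theoretic denominators, and verifying that $3\nmid m$ is the precise condition for realising $\pm 4$ rather than a proper multiple of it, in the spirit of Browder's $m=1$ argument in~\cite{Bro-surv}.
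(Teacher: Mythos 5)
Your framing and your endgame are fine: reducing the first claim to $\Delta(4)=0$ in ${\bf S}^s(S^7\times\cptwom)$ is correct, and once $\Sigma^{4m+7}(4)\in I_h(S^7\times\cptwom)$ is known, the divisibility of $|bP_{4m+8}|$ by $4$ together with Corollary~\ref{cor: diffeo criterionfor sigma7(d)} and Remark~\ref{rmk: non-orient diffeo} does give at most $4$ oriented and $3$ unoriented types, exactly as in the paper. The problem is that the core step --- producing an element of $[\Sigma(M_+),F/O]$ with surgery obstruction $4$, and seeing why the hypothesis $3\nmid m$ is what makes this possible --- is exactly what you defer to ``the hardest part,'' so as written there is no proof of the lemma; that step \emph{is} the lemma. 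Moreover the mechanism you sketch for it is off target in two ways. First, the displayed obstruction formula is evaluated against $[\cptwom]$ and so computes (at best) an obstruction in $L_{4m}$, whereas the class you need lives over the $(4m+8)$-dimensional $\Sigma(M_+)$; the ``pairing with the standard generator of $\pi_8(F/O)$'' that is supposed to bridge this is not a defined operation on normal invariants without a product formula, and the natural product formula (Browder's, \cite[Theorem III.5.4]{Bro-book}, the one the paper itself uses) forces a class of the form (generator of $\pi_8(F/O)$)$\,\times\,\cptwom$ to have obstruction in $28\mathbb Z$, not $4\mathbb Z$, since the image of $\sigma\co\pi_8(F/O)\to L_8(1)$ is $28\mathbb Z$. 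Second, the arithmetic source of the condition $3\nmid m$ is not ``a factor of $3$ in Bernoulli denominators'': in Kervaire--Milnor's formula $|bP_{4r}|=a_r2^{2r-2}(2^{2r-1}-1)n_r$ the numerator $n_r$ is prime to $7$ (the smallest irregular prime is $37$), so the only question is whether $7$ divides $2^{2r-1}-1$ with $r=m+2$, which happens precisely when $3\mid m$.

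The statement can be obtained much more cheaply, and this is what the paper does: apply Fact~\ref{fact: browder} with $d=|bP_8|=28$ (so $\Sigma^7(d)=S^7$ and $h\times{\bf id}(\cptwom)$ is homotopic to a diffeomorphism) and with $d=|bP_{4m+8}|$ (so $\Sigma^{4m+7}(d)=S^{4m+7}$ and $H\,\#\,{\bf id}(M)$ is homotopic to a diffeomorphism); this shows $\ker(\Delta)$ contains both $28\mathbb Z$ and $|bP_{4m+8}|\mathbb Z$, hence $\gcd(28,|bP_{4m+8}|)\mathbb Z$, and the mod $7$ computation above shows this gcd divides $4$ when $3\nmid m$, giving $\Delta(4)=0$ with no characteristic-class input at all. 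If you insist on your bundle-theoretic route, a correct completion would in effect have to show that the image of $\sigma$ on $[\Sigma(M_+),F/O]$ contains both $28$ (from the $S^8$-cell) and $|bP_{4m+8}|$ (from the top cell), i.e.\ redo the same arithmetic on $|bP_{4r}|$; the Morgan--Sullivan-style computation over $\cptwom$ does not by itself produce the obstruction $\pm4$ you assert.
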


\begin{proof}
In the notation of Fact~\ref{fact: browder},
if $d=|bP_{4r}|$, then $h$ is homotopic to a diffeomorphism, so
the group $\mathrm{ker}(\Delta)\subset\mathbb Z$
contains subgroups of indices $|bP_{4r}|$ and $|bP_{4m+4r}|$.

By~\cite{KerMil} and~\cite{Qui}, for $r\ge 2$ the order of $bP_{4r}$
is $a_r2^{2r-2}(2^{2r-1}-1)n_r$
where $a_r$ is 2 if $r$ is odd and 1 if $r$ is even, and $n_r$ is the numerator of $B_r/4r$
where $B_r$ is the corresponding Bernoulli number.
Basic results in number theory imply that
either $n_r=1$ or $n_r$ is equal to a product of
irregular primes.

It is straightforward to check that $7$ divides $|bP_{4r}|$
if and only if $3$ divides $r-2$ (the point is that
$7$ does not divide $n_r$ because the smallest irregular prime
is $37$,
and hence we need to see when $7$ divides $(2^{2r-1}-1)\cdot 2$;
setting $r=3s+u$ with $u\in\{0,1,2\}$, $s\in\mathbb Z$, we get
$2^{2r}-2=8^{2s}\cdot 2^{2u}-2$
which is equal to $2^{2u}-2$ mod $7$, so $u$ must be $2$).

Now specialize to the case $r=2$, for which $|bP_8|=28=4\cdot 7$,
let $bP_{4m+8}=bP_{4r}$ for $r=m+2$, and suppose that
$3$ does not divide $m$, so that $7$ does not divide $|bP_{4m+8}|$.
Since $\mathrm{ker}(\Delta)\subset\mathbb Z$
contains $28\mathbb Z$ and $|bP_{4m+8}|\mathbb Z$,
the group $\mathrm{ker}(\Delta)$ must contain $4\mathbb Z$ as claimed.
\end{proof}

To show that $I_h(S^7\times\cptwo)\cap bP_{4r}$
lies in an index $4$ subgroup we need to
recall a formula of Brumfiel as stated in~\cite[2.2]{Sch87}.
Set $\theta_r:=|bP_{4r}|$; as mentioned above, $\theta_r$
is divisible by 4.
In~\cite[Section 5]{Bru} Brumfiel defines a homomorphism
$f_R\co \Theta_{4r-1}\to \mathbb Z_{\theta_r}$
such that $f_R\vert {bP_{4r}}$
is an isomorphism, and he proves
in~\cite[Proposition II.3, p. 403]{Bru}
that for every closed, oriented, smooth manifold
$N$ of dimension $4r-1\ge 7$, and every homotopy
sphere $\Sigma^{4r-1}\in I_h(N)$ we have the following
equation:
\begin{equation}
\label{form: brumfiel}
f_R(\Sigma^{4r-1})=-\sum_{m=1}^r\left\langle
\frac{\theta_m}{a_m j_m(2m-1)!}\,
L_{r-m}(N)\,p_m(\xi),\, [S^1\times N]
\right\rangle
\end{equation}
Here $a_m$ is 2 if $m$ is odd and 1 if $m$ is even, $j_{m}$ is the order of the
image of the $J$-homomorphism $\pi_{4m}(BO)\to\pi_{4m}(BF)$,
$L_{r-m}$ is the $4(r-m)$-dimensional Hirzebruch polynomial,  $p_m(\xi)$ is the
$m^{\rm th}$ Pontryagin class, $\xi$ is the pullback of some fiber homotopy
trivial vector bundle over the suspension $\Sigma N$ of $N$ under the collapsing
map $S^1\times N\to S^1\wedge N=\Sigma N$, and the sum
is evaluated on the fundamental class of $S^1\times N$.

The proof of~\cite[Theorem 2.1]{Sch87} shows that
each summand in (\ref{form: brumfiel})
has even numerator and odd denominator,
and therefore $f_R(\Sigma^{4r-1})$
is an even multiple of a generator
in the group $\mathbb Z_{\theta_r}$,
which has even order.
One step of that proof
was~\cite[Sublemma 2.3]{Sch87} showing
that $p_m(\xi)$ is divisible by
$j_m$ modulo torsion.
In Lemma~\ref{lem: fiber homot pontr} below we improve this sublemma
by a factor of two when
$N=S^7\times\cptwo$; this will complete the proof of
Theorem~\ref{thm: index 4 in inertia}.

\begin{lem}
\label{lem: fiber homot pontr}
If $\xi$ is a stably fiber homotopically trivial
vector bundle over the suspension of $S^7\times
\cptwo$, then for each positive integer $m$ the
$m^{\rm th}$ Pontryagin class $p_m(\xi)$ is
divisible by $2j_{m}$.
\end{lem}
\begin{proof}
Given two CW complexes $X$ and $Y$ there is a natural homotopy equivalence
between $\Sigma(X\times Y)$ and $\Sigma X\vee\Sigma Y\vee\Sigma (X\wedge Y)$
({\it e.g.\/}, see ~\cite[proof of III.4.6]{Bro-book}). Hence it suffices to
establish the result for bundles over $S^8$, $\Sigma\cptwo$ and $\Sigma^8\cptwo$.
Since $\Sigma\cptwo$ is obtained by attaching
a $5$-cell to $\Sigma\cpone=S^3$ and $\pi_3(BO)=\pi_5(BO)=0$,
we know that $[\Sigma\cptwo, BO]$ is trivial.

A key ingredient in what follows is an integrality result of Bott
(see~\cite{BotMil}), which states that the Pontryagin class $p_m$ of a vector bundle
over $S^{4m}$ is divisible by $a_m\cdot (2m-1)!$, where
$a_m=2$ if $m$ is odd and $a_m=1$ if $m$ is even.

Suppose that $\xi$ is a stably fiber homotopically
trivial vector bundle over $S^8$. Since $\pi_7(F/O)=0$,
the homotopy sequence of the fibration
implies that the J-homomorphism
$\mathbb Z=\pi_8(BO)\to\pi_8(BF)=\mathbb Z_{240}$
is onto, so $\xi$ is stably isomorphic to $240k\omega$
where $k\in\mathbb Z$ and $\omega$ represents a generator in $\pi_8(BO)$.
By Bott's integrality result $p_2(\omega)$ is divisible by $6$.
It follows that $p_2(\xi)=240k p_2(\omega)$, hence
$p_2(\xi)$ is divisible by $240\cdot 6$.  By \cite[Theorem 1.6 and the
subsequent paragraph]{Ada-J-IV} we know that $j_2=240$,
so $p_2(\xi)$ is divisible by $6j_2$, as desired.

Next, suppose that $\xi$ is a
stably fiber homotopically trivial vector bundle over
$\Sigma^8\cptwo$. In the commutative diagram
vertical arrows are $J$-homomorphisms, and
rows are cofiber exact sequence
associated with the mapping cone sequence
$S^2=\cpone\to\cptwo\to\cptwo /\cpone=S^4$.
\[
\xymatrix{
\pi_{11}(BO)\ar[r]\ar[d]& \pi_{12}(BO)
\ar[r]^{\times 2\ \quad}\ar[d]^{\text{onto}}&
[\Sigma^8\cptwo, BO]\ar[r]\ar[d]& \pi_{10}(BO)\ar[r]\ar[d]^{1-1}&
\pi_{11}(BO)\ar[d]\\
\pi_{11}(BF)\ar[r]& \pi_{12}(BF)\ar[r]&
[\Sigma^8\cptwo, BF]\ar[r]& \pi_{10}(BF)\ar[r]&
\pi_{11}(BF)
}
\]
One knows that $[\cptwo, BO]=\mathbb Z$~\cite[Theorem 3.9]{San64},
and by Bott Periodicity $\pi_{12}(BO)=\mathbb Z$,
$\pi_{11}(BO)=0$, $\pi_{10}(BO)=\mathbb Z_2$,
and $[\Sigma^8\cptwo, BO]\cong [\cptwo, BO]$.
Thus the map
\[
[\Sigma^8(\cptwo/\cpone), BO]=\pi_{12}(BO)\to
[\Sigma^8\cptwo, BO]
\]
is multiplication by $\pm\,2$.

Since $J\co \pi_{10}(BO)\to\pi_{10}(BF)$ is
one-to-one~\cite[Theorem 1.3]{Ada-J-IV},
and $\xi$ is stably fiber homotopically trivial, it
follows that $\xi$
is a pullback of some vector bundle $\zeta$ over $S^{12}$.
Since
$J\co \mathbb Z=\pi_{12}(BO)\to\pi_{12}(BF)=\mathbb Z_{504}$
is onto, and $\pi_{11}(BF)=\mathbb Z_6$, a diagram chase
shows that (the class of) $\zeta$ in $\pi_{12}(BO)=\mathbb Z$
lies in $84\mathbb Z$ where $84\cdot 6=504$, so
$\zeta=84\zeta^\prime$ in $\pi_{12}(BO)$.
By Bott's result, $p_3(\zeta^\prime)$ is divisible by $2\cdot 5!$,
so $p_3(\zeta)$ is divisible by $84\cdot 2\cdot 5!$.
Recalling that pullback acts as multiplication by $2$,
we see that $p_3(\xi)$ is divisible by
$2\cdot 84\cdot 2\cdot 5!=80\cdot 504=80j_{3}$,
which completes the proof.
\end{proof}

\section{Nondiffeomorphic codimension $2$ simply connected souls}
\label{sec: codim 2 souls}

In~\cite[Theorem 1.8]{BKS1} the authors showed that if
$S$ and $S^\prime$ are closed simply connected manifolds
of dimension at least 5 such that complex line bundles over
$S$ and $S^\prime$ have diffeomorphic total spaces, then $S^\prime$ is
diffeomorphic to the connected sum of $S$ with a homotopy sphere
which bounds a parallelizable manifold.
We shall prove a partial converse to this statement:

\begin{thm}\label{thm: diffeo total spaces}
Let $\o$ be a nontrivial complex line bundle  over
a closed simply connected $n$-manifold $S$ with $n\ge 5$, and
let $S^\prime$ be the connected sum of $S$ with a homotopy sphere
that bounds a parallelizable manifold.
Let $\o^\prime$ be the pullback of $\o$ via the standard
homeomorphism $S^\prime\to S$. Then the disk bundles
$D(\o^\prime )$, $D(\o )$ are diffeomorphic, except
possibly when $n\equiv 1\ \mathrm{mod}\; 4$ and
$\pi_1(\d D(\omega))$ has even order.
\end{thm}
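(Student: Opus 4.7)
The plan is to show the diffeomorphism by a cut-and-paste reduction to an extension problem on a glued-in disk.

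First I would decompose the base. Choose a small open disk $U\subset S$ in which the connected sum $S'=S\#\Sigma$ is performed, and let $S_0=S\setminus U$ and $\Sigma_0=\Sigma\setminus U'$, where $U'$ is the corresponding small disk in $\Sigma$. Because $\omega$ is trivial over the contractible set $U$, the disk bundles split as
\[
D(\omega)\ =\ A\ \cup_{S^{n-1}\times D^2}\ \bigl(D^n\times D^2\bigr),\qquad
D(\omega')\ =\ A\ \cup_{S^{n-1}\times D^2}\ \bigl(\Sigma_0\times D^2\bigr),
\]
where $A=D(\omega|_{S_0})$ and both gluings match the ``horizontal'' boundary $S^{n-1}\times D^2$.

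Next, I would trivialize the inner pieces. Since $n\ge 5$, the (relative) $h$-cobordism theorem (using $\Theta_5=0$ for the boundary case $n=5$) applied to the contractible manifold $\Sigma_0$ with sphere boundary gives a diffeomorphism $\psi\co \Sigma_0\xrightarrow{\cong} D^n$, so that $\Sigma_0\times D^2\cong D^{n+2}$. Under this identification, the gluing region $S^{n-1}\times D^2\subset \partial(\Sigma_0\times D^2)$ is reattached to $A$ via $\phi\times\mathrm{id}_{D^2}$, where $\phi=\psi|_{S^{n-1}}\in\mathrm{Diff}(S^{n-1})$ represents $[\Sigma]\in\Theta_n$ under the twisted-sphere construction. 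Thus $D(\omega)\cong D(\omega')$ as soon as the boundary diffeomorphism $\phi\times\mathrm{id}_{D^2}$ can be absorbed, i.e.\ extended to a self-diffeomorphism of $D^{n+2}$ fixing the complement of $S^{n-1}\times D^2$ in $\partial D^{n+2}=S^{n+1}$, or to a self-diffeomorphism of $A$ that is the identity on the vertical boundary (the circle bundle over $S_0$).

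The heart of the argument—and the main obstacle—is realizing this extension, and this is where the $S^1$-bundle structure enters. Because $\omega$ is nontrivial, the boundary $P=\partial D(\omega)$ is a nontrivial circle bundle over $S$, and a fiber $S^1\hookrightarrow P$ represents a generator of $\pi_1(P)\cong\mathbb Z/k$ with $k$ the divisibility of the Euler class. The $S^1$-direction supplies an extra parameter along which one can ``spin'' the twisting diffeomorphism $\phi$: I would construct a self-diffeomorphism of a tubular neighborhood of this fiber (lying inside $A$) whose boundary value is $\phi\times\mathrm{id}_{D^2}$, using a $\pi_1$-loop of diffeomorphisms of $D^n$ restricting to $\phi$ on $S^{n-1}$. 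Equivalently, one produces an $s$-cobordism between $D(\omega)$ and $D(\omega')$ relative to the boundary and applies the $s$-cobordism theorem.

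Finally, I would track the remaining obstruction in the surgery exact sequence for $(D(\omega),\partial D(\omega))$. Since $\Sigma$ is stably parallelizable, the pullback $\omega'$ is tangentially homotopy equivalent to $\omega$, so the refined normal invariant of the candidate structure vanishes, and the obstruction lies in a relative Wall group $L_{n+3}^{s}(1\to \pi_1 P)$. The surgery product formula together with the computations of $L_{4r}(\mathbb Z[\mathbb Z/k])$ shows that this obstruction is always trivial unless $n+3\equiv 0\bmod 4$ and $\pi_1P$ carries $2$-torsion—that is, unless $n\equiv 1\bmod 4$ and $k$ is even—giving exactly the exceptional case in the statement. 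In all other cases the obstruction vanishes and the required diffeomorphism $D(\omega')\cong D(\omega)$ is produced.
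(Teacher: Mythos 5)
Your reduction to absorbing the gluing diffeomorphism $\phi\times\mathrm{id}_{D^2}$ is a reasonable reformulation, but the heart of your argument is exactly the step you do not actually carry out. The ``spinning along the $S^1$-fiber'' construction is unjustified: to spin $\phi$ around a circle direction you would need a path (or suitable loop) of diffeomorphisms of $S^{n-1}$ connecting $\phi$ to a diffeomorphism that extends over $D^n$, and the existence of such a path is equivalent to $\Sigma$ being standard; the finiteness of $\pi_1(\partial D(\omega))$ gives no such family. Saying ``equivalently, one produces an $s$-cobordism and applies the $s$-cobordism theorem'' is an assertion of the conclusion, not a proof. The actual proof (the paper's) works directly with the homotopy structure $\hat f\co D(\omega')\to D(\omega)$ in ${\bf S}^s(D(\omega))$: one shows ${\mathfrak q}(\hat f)=p^{\ast}{\mathfrak q}(f)=0$, where $f\co S'\to S$ is the standard homeomorphism (whose normal invariant in $[S,F/O]$ is trivial) and $p$ is the bundle projection, so by exactness $\hat f$ lies in the orbit of the identity under $\Delta\co L^s_{n+3}(\pi_1 D(\omega),\pi_1\partial D(\omega))\to{\bf S}^s(D(\omega))$, and then one shows this relative Wall group vanishes outside the exceptional case.

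Your fallback surgery paragraph does not supply this. First, ``$\Sigma$ is stably parallelizable, hence the (refined) normal invariant vanishes'' is a non sequitur: tangentiality only says the normal invariant lifts to $[X,F]$; exotic spheres outside $bP_{n+1}$ are tangentially homotopy equivalent to $S^n$ yet have nontrivial normal invariant. Second, the relevant group is $L^s_{n+3}(\mathbb Z_d\to 1)$ (the boundary has $\pi_1\cong\mathbb Z_d$, the total space is simply connected), not $L^s_{n+3}(1\to\pi_1 P)$, and its computation has nothing to do with a product formula or with $L_{4r}(\mathbb Z[\mathbb Z_k])$: via the long exact sequence of the pair it is $\ker\left(L^s_{n+2}(\mathbb Z_d)\to L^s_{n+2}(1)\right)$, an \emph{odd}-dimensional $L$-group question, settled by Wall's vanishing for $n+2\equiv 1\ \mathrm{mod}\ 4$ and Bak--Wall's vanishing of $L^s_{\mathrm{odd}}(\mathbb Z_d)$ for $d$ odd, with the exception coming from $L^s_3(\mathbb Z_d)=\mathbb Z_2$ for $d$ even. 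Note that $L_{4r}(\mathbb Z[\mathbb Z_k])$ is nontrivial (multisignature) for every $k>1$, so your stated computation would not even yield the dichotomy you claim; the fact that your exceptional case matches the theorem is coincidental rather than a consequence of the argument given.
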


Before proceeding to prove this result, we shall provide some
insight into its final sentence, proving that under the given
assumptions $\pi_1(\d D(\omega))$ is always a finite cyclic group.

\begin{lem} \label{lem: fund gr of circle bundle}
Let $B$ be a simply connected
closed manifold and let $P\to B$ be the projection of
a nontrivial circle bundle.
Then $\pi_1(P)\cong\mathbb Z_d$,
where the Euler class of the circle bundle is a $d^{\,th}$
multiple of an indivisible
element in the free abelian group $H^2(B)$.
\end{lem}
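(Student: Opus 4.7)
My plan is to derive the result from the long exact homotopy sequence of the fibration $S^1 \to P \to B$, after identifying the connecting homomorphism with Kronecker pairing against the Euler class. Since $B$ is simply-connected, $\pi_1(B) = 0$, and the sequence reduces to
\[
\pi_2(B)\ \xrightarrow{\d}\ \pi_1(S^1)\ \to\ \pi_1(P)\ \to\ 0,
\]
so that $\pi_1(P) \cong \mathbb{Z}/\mathrm{im}(\d)$ under the identification $\pi_1(S^1)=\mathbb{Z}$.

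Next I would identify $\d$ by naturality. Classify the bundle by a map $c\co B \to BS^1 = K(\mathbb{Z},2)=\mathbb{CP}^\infty$, for which $c^*\iota = e$ for the fundamental class $\iota \in H^2(BS^1;\mathbb{Z})$. The bundle map $P\to ES^1$ covering $c$ yields a commutative square
\[
\xymatrix{
\pi_2(B)\ar[r]^{\d}\ar[d]_{c_*} & \pi_1(S^1)\ar@{=}[d] \\
\pi_2(BS^1)\ar[r]^{\d_0} & \pi_1(S^1)
}
\]
in which $\d_0$ is an isomorphism since $ES^1$ is contractible. The Hurewicz theorem (applicable as both $B$ and $BS^1$ are simply-connected) identifies $\pi_2(B)\cong H_2(B;\mathbb{Z})$ and $\pi_2(BS^1)\cong\mathbb{Z}$, and under these identifications $\d_0\tinycirc c_*$ becomes the Kronecker pairing $\langle e,\cdot\rangle\co H_2(B;\mathbb{Z})\to\mathbb{Z}$.

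To conclude, since $B$ is simply-connected, the universal coefficient theorem gives $H^2(B;\mathbb{Z})\cong\mathrm{Hom}(H_2(B;\mathbb{Z}),\mathbb{Z})$, so $H^2(B;\mathbb{Z})$ is free abelian. Writing $\mathrm{im}\langle e,\cdot\rangle = d\mathbb{Z}$ with $d\geq 0$, I would let $e_0 \in H^2(B)$ be the unique element with $de_0 = e$ (well defined as $\langle e,\alpha\rangle\in d\mathbb{Z}$ for all $\alpha$); then $\langle e_0,\cdot\rangle$ is surjective, which forces $e_0$ to be primitive, since any factorization $e_0 = nf$ with $n>1$ would confine $\mathrm{im}\langle e_0,\cdot\rangle$ to $n\mathbb{Z}$. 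Nontriviality of the bundle means $e\ne 0$, hence $d\geq 1$, and $\pi_1(P) \cong \mathbb{Z}_d$ as claimed. The step deserving most care is the Kronecker identification in paragraph two, which ultimately reduces to the classical fact that evaluation of $\iota\in H^2(K(\mathbb{Z},2))$ realizes the Hurewicz isomorphism $\pi_2(BS^1)\xrightarrow{\cong}H_2(BS^1)=\mathbb{Z}$.
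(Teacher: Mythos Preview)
Your proof is correct and takes a genuinely different route from the paper's. The paper argues via the Gysin sequence in cohomology: from
\[
0=H^1(B)\to H^1(P)\to H^0(B)\xrightarrow{\,\cup e\,} H^2(B)\to H^2(P)\to 0
\]
it reads off $H^1(P)=0$ and the torsion of $H^2(P)$, then applies the universal coefficients theorem to obtain $H_1(P)\cong\mathbb Z_d$, and finally invokes the homotopy exact sequence only to see that $\pi_1(P)$ is cyclic, hence equal to $H_1(P)$. You instead work entirely inside the homotopy exact sequence, identifying the connecting map $\pi_2(B)\to\pi_1(S^1)$ with Kronecker pairing against $e$ by naturality under the classifying map. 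Your argument is shorter and more self-contained for the purpose at hand; the paper's route produces a bit more information (the cohomology groups $H^1(P)$ and $H^2(P)$) as a by-product, though that information is not used elsewhere. Both approaches rest on the same identification $H^2(B)\cong\mathrm{Hom}(H_2(B),\mathbb Z)$ from the universal coefficients theorem, which is what makes ``$e$ is $d$ times a primitive element'' equivalent to ``$\langle e,\cdot\rangle$ has image $d\mathbb Z$''.
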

\begin{proof}
Since $H_1(B)$ is trivial, the Universal Coefficient Theorem implies
that $H^2(B)=\mathrm{Hom}(H_2(B),\mathbb Z)$,
and therefore $H^2(B)$ is free abelian.
Consider the following partial Gysin sequence
\[
0=H^1(B)\to H^1(P)\to H^0(B)\to H^2(B)\to H^2(P)\to H^1(B)=0
\]
in which the middle map is multiplication by the Euler class.
Since the Euler class is nontrivial and $H^2(B)$
has no torsion, we see that the Euler class has infinite order,
so that $\mathbb Z=H^0(B)\to H^2(B)$ must be injective and
$H^1(P)=0$ by exactness.
The Universal Coefficient Theorem implies that
$H_1(P)$ is mapped onto $\mathrm{Hom}(H^1(P),\mathbb Z)=0$
with kernel $\mathrm{Ext}(H^2(P),\mathbb Z)$, which is
isomorphic to the torsion subgroup of $H^2(P)$.
Finally, the homotopy sequence
of the circle bundle $P\to B$ and triviality of $\pi_1(B)$
imply that $\pi_1(P)$ is cyclic, so that $\pi_1(P)=H_1(P)$.
If the Euler class of the circle bundle is the $d^{\,{\rm th}}$
multiple of some indivisible element in $H^2(B)$, then
the exactness of the Gysin sequence implies that the torsion subgroup
of $H^2(P)$ is isomorphic to $\mathbb Z_d$, and therefore
$\pi_1(P)\cong\mathbb Z_d$.
\end{proof}

\begin{proof}[Proof of Theorem~\ref{thm: diffeo total spaces}]
We may assume $n$ is odd, for in even dimensions there are no exotic
spheres which bound parallelizable manifolds. By surgery theory the
standard homeomorphism $f\co S^\prime\to S$ has trivial normal
invariant in $[S, F/O]$.
If $\bar f\co D(f^\#\omega)\to D(\omega)$ is
the induced map of $2$-disk bundles, and if
$p\co D(\omega)\to S$ denotes
the disk bundle projection, then the normal invariants of
$\bar f$ and $f$ are related as
${\mathfrak q}(\bar f)=p^\ast {\mathfrak q}(f)$ ({\it e.g.\/},
see~\cite[Lemma 5.9]{BKS1}); thus
${\mathfrak q}(\bar f)$ is trivial. If we set $N:=D(\omega)$,
then the element of the structure set represented by $\bar f$
lies in the image of
\[
\xymatrix{
\Delta\co L^s_{n+3}
\left(\,\pi_1(N),\pi_1(\partial N)\,\right)\ar[r]& {\bf S}^s(N)~.
}
\]
Lemma~\ref{lem: fund gr of circle bundle} implies that
$\pi_1(\partial N)=\pi_1(S(\omega))\cong \mathbb Z_d$ for some $d\ge 1$,
in which case the above relative Wall group
$L^s_{n+3}\left(\,\pi_1(N),\pi_1(\partial N)\,\right)$ can be
(and often is) denoted by $L^s_{n+3}(\mathbb Z_d\to 1)$.

If $\mathbb Z_d=1$, then Wall's $\pi-\pi$ Theorem
implies that  $L^s_{n+3}(\mathbb Z_d\to 1)$ is trivial, so
$\bar f$ is homotopic to a diffeomorphism as desired.
In general, there is a short exact
sequence:
\[
\xymatrix{
L^s_{n+3}(\mathbb Z_d)\ar[r]
&
L^s_{n+3}(1)\ar[r]&
L^s_{n+3}(\mathbb Z_d\to 1)\ar[r]&
L^s_{n+2}(\mathbb Z_d)\ar[r]&
L^s_{n+2}(1)
}
\]
In this sequence the left and right arrows are split
surjections with one-sided inverses induced by the trivial
inclusion $1\to\mathbb Z_d$, and therefore $L^s_{n+3}(\mathbb Z_d\to 1)$
is isomorphic to the kernel of the split surjection
$L^s_{n+2}(\mathbb Z_d)\to L^s_{n+2}(1)$.
It is stated in~\cite[p.\,227]{HamTay} and
proved in~\cite[Sections 10--12]{HamTay}
that $L^s_{n+2}(\mathbb Z_d)=0$
if $n\equiv 3\ \mathrm{mod}\; 4$, and $L^s_{\mathrm{odd}}(\mathbb Z_d)=0$
provided $d$ is odd. Thus $\bar f$ is homotopic to
a diffeomorphism except possibly when $n\equiv 1\ \mathrm{mod}\; 4$ and
$d$ is even, in which case $L^s_{n+3}(\mathbb Z_d\to 1)=\mathbb Z_2$
because $L^s_{n+2}(\mathbb Z_d)=\mathbb Z_2$ and $L^s_{n+2}(1)=0$.
\end{proof}

The next result is needed to construct further examples of complete,
noncompact, simply connected manifolds with metrics of nonnegative
sectional curvature.

\begin{lem}
\label{lem: curv for circle bundles}
Let $P\to B$ be a principal circle bundle whose total space $P$ is
$2$-connected and has an $S^1$-invariant metric
of nonnegative sectional curvature. Then $H^2(B)\cong\mathbb Z$ and
the total spaces of all complex line bundles over $B$
support complete metrics of nonnegative sectional curvature such
that the zero sections are souls.
\end{lem}

\begin{proof}
By the homotopy sequence of the fibration $P\to B$ we see that
$B$ is simply connected and $\pi_2(B)\cong\mathbb Z$, and by the
Hurewicz and Universal Coefficient Theorems we have $H^2(B)=\mathbb Z$.
Note that every complex line bundle $\omega$ over $B$ can be written
as $P\times_\rho\mathbb C$ for some representation
$\rho\co S^1\to U(1)$ (because the vanishing of
$H^2(P)$ implies the triviality of the pullback of $\omega$
via the projection $P\to B$, and
$\rho$ comes from the $S^1$-action on the
$\mathbb C$-factor of $P\times\mathbb C$).
The product metric on $P\times\mathbb C$ has nonnegative sectional curvature,
and it descends to a complete  metric  on $P\times_\rho\mathbb C$
of nonnegative sectional curvature with soul $P\times_\rho\{0\}$, which can be
identified with $B$.
\end{proof}

\begin{cor}
\label{cor: metric on line bundles}
Let $S$ be an Eschenburg space, Witten manifold, or a product
$\Sigma^7(d)\times\cptwom$ with $m\ge 1$. Then the total
space of every complex line bundle over $S$ admits a complete
metric of nonnegative sectional curvature with soul equal to
the zero section.
\end{cor}
\begin{proof}
Recall that Eschenburg spaces and Witten manifolds appear as
quotients of $SU(3)$ and
$S^5\times S^3$, respectively, by free circle actions
which preserve metrics of nonnegative sectional curvature.
Similarly, $\Sigma^7(d)\times\cptwom$ with $m\ge 1$
is the quotient of $\Sigma^7(d)\times S^{4m+1}$
by the free circle action that is trivial on the first factor
and standard on the second one; this circle action is isometric
with respect to the product
of a metric of nonnegative sectional curvature constructed in~\cite{GroZil}
and the standard metric on $S^{4k+1}$.
Therefore these spaces satisfy the assumptions of
Lemma~\ref{lem: curv for circle bundles}, and accordingly they support
metrics with the appropriate properties.
\end{proof}

\begin{proof}[Proof of Theorem~\ref{intro-thm: nondiffe souls}]
Fix homeomorphic, nondiffeomorphic manifolds $S$, $S^\prime$
that are products $\Sigma^7(d)\times\cptwom$ with $m\ge 1$,
Eschenburg spaces, or Witten manifolds. The existence of such pairs is ensured by
Theorem~\ref{intro-thm: CPq cancells} in the first case
and by results of~\cite{KS-wit, CEZ} in the remaining cases.
We claim that in each case $S^\prime$ is diffeomorphic to the connected sum of $S$
with a homotopy sphere that bounds a parallelizable manifold.
For products $\Sigma^7(d)\times\cptwom$ this easily follows as
in the proof of Corollary~\ref{cor: diffeo criterionfor sigma7(d)}.
For Eschenburg spaces or Witten manifolds this is implied
by smoothing theory and the fact that their third cohomology groups
with $\mathbb Z_2$-coefficients vanish; the crucial point is that if
the manifold $M_0$ is obtained by removing the interior of a closed
coordinate disk
from a closed $7$-manifold $M$ with $H^3(M;\mathbb Z_2)=0$, then
$M_0$ has a unique smooth structure because
$H^3(M_0;\mathbb Z_2)=0$ and the Kirby-Siebenmann map
$Top/O\to K(\mathbb Z_2,3)$ is $7$-connected
(compare \cite[p. 123]{HiM} and \cite[Essay V, Sections 4 and 5]{KiSi}).

Recall that every
element of $H^2(S)\cong\mathbb Z$ is the first Chern class
of a unique complex line bundle over $S$.
Lemma~\ref{lem: curv for circle bundles} shows that
$H^2(S)\cong\mathbb Z$, so there exists a nontrivial
complex line bundle over $S$. By Theorem~\ref{thm: diffeo total spaces},
this line bundle and its pullback
via the standard homeomorphism $S^\prime\to S$ have diffeomorphic
total spaces, and by Corollary~\ref{cor: metric on line bundles}
these total spaces admit complete
metrics of nonnegative sectional curvature with souls diffeomorphic to $S$
and $S^\prime$.
\end{proof}

\begin{proof}[Proof of Theorems~\ref{intro-thm: classification for s7xcp2}
and~\ref{intro-thm: classification for witten}]
By Theorem~\ref{intro-thm: CP2 cancells}
the manifolds $\Sigma^7(i)\times\cptwo$ with $i\in\{0,1,2\}$
are pairwise nondiffeomorphic. As in the proof of
Theorem~\ref{intro-thm: nondiffe souls} we conclude that
the total space $N$ of a nontrivial vector bundle over
$S^7\times\cptwo$ admits three complete metrics of nonnegative sectional curvature
with souls $S_i$ diffeomorphic to $\Sigma^7(i)\times\cptwo$, where
$i\in\{0,1,2\}$.

Suppose that $S$ is a soul of an arbitrary metric of nonnegative sectional curvature
on $N$. There is a canonical homotopy equivalence
$f_i\co S\to S_i$
given by the inclusion  $S\to N$ followed by the normal
bundle projection $N\to S_i$. It was shown
in~\cite[Corollary 4.2, Proposition 4.4]{BKS1} that
$f_i$ has trivial normal invariant in $[S, F/O]$.
Therefore by surgery theory $S$ is diffeomorphic to
$S_i\#\Sigma^{11}(d)$ for some $d$,
and after composing with this diffeomorphism
$f_i$ becomes homotopic to the connected sum of $\mathbf{id}(S_i)$
with the standard homeomorphism $\Sigma^{11}(d)\to S^{11}$.
Setting $i=0$ we conclude from Fact~\ref{fact: browder}
that $S$ is diffeomorphic to $S_d$ where
by Theorem~\ref{intro-thm: CP2 cancells} we may choose $d$
in $\{0,1,2\}$. Let $\phi\co S\to S_d$ be a diffeomorphism.

By~\cite[Corollary 4.2]{BKS1} the Euler classes of the normal bundles
of $S$, $S_d$ are preserved by $f_{d}$, and since
$H^2(S)\cong H^2(S_d)\cong\mathbb Z$, their Euler classes are also
preserved by $\phi$ up to sign.
So after changing orientation if needed, we may conclude that $\phi$
preserves the Euler classes of the normal bundles $\nu$, $\nu_d$
of $S$, $S_d$ in $N$, and hence the normal bundles themselves, for
the Euler class determines an oriented 2-dimensional vector bundle
up to isomorphism. In other words, the pullback bundle
$\phi^\#\nu_d$ is isomorphic to $\nu$, and in particular,
the pairs $(N, S)$ and $(N, S_d)$ are diffeomorphic.

The same proof works for the Witten manifold $M_{k,l}$
where $k,l$ are as in the assumptions of
Theorem~\ref{intro-thm: classification for witten}.
The only difference is that Theorem~\ref{intro-thm: CP2 cancells}
must be replaced by
the remark after~\cite[Corollary C]{KS-wit}; namely, under
our assumptions on $k$ and $l$ every smooth manifold that is
homeomorphic to $M_{k,l}$ must be a Witten manifold.
\end{proof}

\section{Manifolds tangentially homotopy equivalent to $S^7\times\cptwo$}
\label{sec: tang hom eq}

As a by-product of our methods we prove the following result,
which is not used elsewhere in the paper.

\begin{thm}\label{thm: tang hom eq S7xCP2}
If the closed manifold $M^{11}$ is tangentially homotopy
equivalent to $S^7\times\cptwo$ and $d$ is an odd integer, then
$M^{11}$ is diffeomorphic to exactly one of the manifolds $S^7\times\cptwo$,
$\Sigma^7(d)\times\cptwo$, or $\Sigma^7(2d)\times\cptwo$.
\end{thm}

The proof of Theorem~\ref{thm: tang hom eq S7xCP2}
relies heavily on the known structure of the
stable homotopy groups $\pi_*^{\bf S}$ in relatively low dimensions,
and thus there is no reason to expect a similar conclusion
if $\cptwo$ is replaced by $\cptwom$ for most (in fact, almost all) choices
of $m\geq 2$.

\begin{proof}
By Theorem~\ref{thm: index 4 in inertia}
it suffices to show that $M^{11}$ is diffeomorphic to
$\Sigma^7(d)\times\cptwo$ for some $d$.
The key point is to understand the normal
invariant ${\mathfrak q}(h)$ of an arbitrary tangential homotopy
equivalence $h:M^{11}\to S^7\times \cptwo$. Since
$h$ is a tangential homotopy equivalence, it follows that the
normal invariant ${\mathfrak q}(h)$ is the image of
some class $\theta\in
[S^7\times\cptwo, F]$ (compare the discussion preceding
Proposition~\ref{prop: tangentials}).

The exact cofiber sequence for the quotient map
$S^7\times \cptwo\to S^7\wedge \cptwo=\Sigma^7\cptwo$
\[
[S^7\cptwo, F]\to
[S^7\times\cptwo, F]\to\pi_7(F)~\oplus~[\cptwo, F]
\]
maps into the similar exact cofiber sequence for
$[\Sigma^7\times\cptwo, F/O]$.
Both sequences split via precomposing with projections
onto the $S^7$ and $\cptwo$ factors, and one has
a commutative diagram in which each of the three
components of $[S^7\times\cptwo, F]$
is mapped into the corresponding component of $[S^7\times\cptwo, F/O]$.

Since $\cptwo$ is the mapping cone of the Hopf map
$\eta_2\co S^3\to S^2$, we know that $[\cptwo, F]$
fits into the following exact cofiber sequence
for the map $\cptwo\to \cptwo/\cpone=S^{4}$.
\[
\pi_4^{\bf S}~\to~[\cptwo,F]~\to~\pi_2^{\bf S}~
\stackrel{\eta^\ast}{\to}~\pi_3^{\bf S},
\]
and hence
$[\cptwo,F]=0$ because $\pi_4^{\bf S}=0$,
and composition with $\eta$ induces a monomorphism from
$\pi_2^{\bf S}$ to $\pi_3^{\bf S}$
as it sends $\eta^2$ to $\eta^3 = 4\nu$,
which has order $2$ (see~\cite[Chapter XIV]{Tod}).
Since $\pi_7(F/O)=0$, it follows that
${\mathfrak q}(h)$ lies in $[\Sigma^7\cptwo, F/O]$;
by the previous paragraph, this implies that
${\mathfrak q}(h)$ is the image of a class in
$[\Sigma^7\cptwo, F]$ which we shall denote by
$\theta'$.

The rows of the commutative diagram below
are exact cofiber sequences for
the map $\Sigma^7\cptwo\to \Sigma^7(\cptwo/\cpone)=S^{11}$,
and columns are portions of
the exact homotopy sequence of the fibration $p\co F\to F/O$.
\[
\xymatrix{
0=\pi_{11}(F/O)\ar[r]&
[\Sigma^7\cptwo, F/O]\ar[r]&\pi_9(F/O)\ar[r]&\pi_{10}(F/O)\\
\pi_{11}(F)\ar[r]\ar[u]&
[\Sigma^7\cptwo, F]\ar[r]\ar[u]&\pi_9(F)\ar[r]^{\eta^\ast\ \ }\ar[u]^{p^\ast}&
\pi_{10}(F)\ar[u]\\
& &\pi_9(O)\ar[r]\ar[u]^{\text{1-1}}&
\pi_{10}(O)=0\ar[u]
}
\]
Since $\pi_{11}(F/O)=0$, we identify
$[\Sigma^7\cptwo, F/O]$ with the kernel of the map
$\pi_9(F/O)\to\pi_{10}(F/O)$
so ${\mathfrak q}(h)$ gets identified with
${\mathfrak q}(h)\vert {\Sigma^7\cpone}\in\pi_9(F/O)$.
Note that $p^\ast$ maps $\theta'\vert {\Sigma^7\cpone}$ to
${\mathfrak q}(h)\vert {\Sigma^7\cpone}$,
and by exactness
$\theta'\vert {\Sigma^7\cpone}\in \mathrm{ker}(\eta^\ast)$.
Thus the normal invariant of every tangential homotopy equivalence
$h$ lies in $p^\ast (\mathrm{ker}(\eta^\ast))$.

By~\cite[Chapter XIV]{Tod},
$\pi_9(F)=\mathbb Z_2\oplus\mathbb Z_2\oplus\mathbb Z_2$
with factors generated by $\nu^3$, $\mu$ and $\eta\tinycirc\e$,
where $\eta^*$ acts by precomposing with $\eta$, which
stably and up to sign amounts to postcomposing with
$\eta$~\cite[Proposition 3.1]{Tod}. Using~\cite[Theorem 14.1]{Tod}
we see that $\eta^*$ maps $\nu^3$ and $\eta\tinycirc\e$
to zero, while $\eta^\ast(\mu)=\mu\tinycirc\eta$ is nonzero.
The $J$-homomorphism $\mathbb Z_2=\pi_{9}(O)\to\pi_{9}(F)$
is one-to-one, and its image lies in $\mathrm{ker}(\eta^\ast )$
because ${\eta^\ast}{\tinycirc}{J}$ factors through $\pi_{10}(O)=0$.
Thus the
subgroup $p^\ast (\mathrm{ker}(\eta^\ast ))$ has order $2$.

As we mentioned in
Remark~\ref{rmk: existence of self-eq with stably nontrivial norm inv}
there exists a tangential
homotopy self-equivalence $f$ of $S^7\times\cptwo$
such that ${\mathfrak q}(f)\vert {\Sigma^7\cpone}$ is nonzero.
Since both ${\mathfrak q}(h)$ and ${\mathfrak q}(f)$ lie
in an order two subgroup, either ${\mathfrak q}(h)$
is trivial or it is equal to ${\mathfrak q}(f)$.
In the former case ${\bf id}(S^7\times\cptwo)$ and $h$
are in the same $bP_{12}$-orbit, and the same is true
in the latter case for the classes of $f$ and $h$.
Thus in either case $M$ is diffeomorphic to
$\Sigma(d)\,\#\,(S^7\times\cptwo)$ for some $d$, as claimed.
\end{proof}

\begin{rmk}
By contrast, every closed manifold $M$ that is tangentially
homotopy equivalent to $S^3\times\cptwo$ must be diffeomorphic
to $S^3\times\cptwo$. Indeed, by~\cite[Corollary 4.2]{MasSch}
the connected sum of $S^3\times\cptwo$ with $\Sigma^7(1)$ is
diffeomorphic to $S^3\times\cptwo$, so it suffices to show
that the tangential homotopy equivalence $f\co M\to S^3\times\cptwo$
has trivial normal invariant. Now $[\Sigma^3\cptwo, F/O]=0$ because
it fits into the exact sequence between the zero groups
$\pi_7(F/O)$ and $\pi_5(F/O)$, and moreover, $\pi_3(F/O)=0$,
so the restriction $[S^3\times\cptwo, F/O]\to [\cptwo, F/O]$ is injective.
The claim now follows because ${\mathfrak q}(f)$ comes from
$[S^3\times\cptwo, F]$, and the composition
$[S^3\times\cptwo, F]\to [S^3\times\cptwo, F/O]\to [\cptwo, F/O]$
factors through $[\cptwo, F]=0$.
\end{rmk}

\small


\begin{thebibliography}{100}

\bibitem{Ada-J-IV}
{Adams, J. F.}, {On the groups {$J(X)$}. {{IV}}},
{Topology} {\bf 5} (1966), {21--71}.

\bibitem{ABK}
{Antonelli, P. L., Burghelea, D. and Kahn, P. J.},
The concordance-homotopy groups of geometric automorphism
groups. Lecture Notes in Mathematics, Vol. 215. Springer-Verlag,
Berlin-New York, 1971.

\bibitem{At}
Atiyah, M. F.,
Thom complexes, Proc. London Math. Soc. (3) {\bf 11} (1961), 291--310.

\bibitem{BKS1}
{Belegradek, I. and Kwasik, S. and Schultz, R.},
{Moduli spaces of nonnegative sectional curvature and non-unique souls},
{J. Diff. Geom.} {\bf 89} (2011), 49--85.


\bibitem{BecSch}
{Becker, J. C. and Schultz, R. E.}, {Equivariant function spaces and
stable homotopy theory. {I}}, {Comment. Math. Helv.} {\bf 49} (1974),
{1--34}.

\bibitem{Bel}
{Belegradek, I.}, {Vector bundles with infinitely many souls}, {Proc.
Amer. Math. Soc.} {\bf 131} (2003), {2217--2221}.


\bibitem{BotMil}
{Bott, R. and Milnor, J.},
{On the parallelizability of the spheres},
{Bull. Amer. Math. Soc.} {\bf 64} (1958), {87--89}.

\bibitem{Bro-book}
{Browder, W.},
{Surgery on simply connected manifolds},
{Ergebnisse der Mathematik und ihrer Grenzgebiete, Band 65},
{Springer-Verlag}, {New York}, {1972}.

\bibitem{Bro-inert}
{Browder, W.},
{On the action of {$\Theta ^{n}\,(\partial \pi )$}},
{Differential and {C}ombinatorial {T}opology ({A} {S}ymposium in
{H}onor of {M}arston {M}orse)},
{Princeton Univ. Press}, {Princeton, N.J.}, {1965}, pp. {23--36}.

\bibitem{Bro-surv}
{Browder, W.},
{Surgery and the theory of differentiable transformation groups},
{Proc. {C}onf. on {T}ransformation {G}roups ({N}ew {O}rleans,
{L}a., 1967)}, {Springer-Verlag}, {New York}, {1968}, pp.
{1--46}.

\bibitem{Bro-Pet}
Browder, W. and Petrie, T.,
Diffeomorphisms of manifolds and semi-free actions on homotopy
spheres, Bull. Amer. Math. Soc. {\bf 77} (1971), 160--163.

\bibitem{Bru}
{Brumfiel, G.},
{Homotopy equivalences of almost smooth manifolds},
{Comment. Math. Helv.} {\bf 46}, (1971),
{381--407}.


\bibitem{CEZ}
{Chinburg, T. and Escher, C. and Ziller, W.},
{Topological properties of {E}schenburg spaces and 3-{S}asakian
manifolds}, {Math. Ann.} {\bf 339}, (2007), {3--20}.


\bibitem{CheGro}
{Cheeger, J. and Gromoll, D.},
{On the structure of complete manifolds of nonnegative curvature},
{Ann. of Math. (2)} {\bf 96}, (1972), {413--443}.


%



\bibitem{EelKui}
{Eells, Jr., J. and Kuiper, H., N.},
{An invariant for certain smooth manifolds},
{Ann. Mat. Pura Appl. (4)}
{\bf 60} (1962), {93--110}.


%

\bibitem{GJ}
Goerss, P. G., and Jardine, J. R.,  Simplicial Homotopy Theory,
Progress in Mathematics Vol. 174.  Birkh\"auser-Verlag, Basel, 1999.

\bibitem{GroMey}
{Gromoll, D. and Meyer, W.},
{An exotic sphere with nonnegative sectional curvature},
{Ann. of Math. (2)} {\bf 100} (1974), {401--406}.

\bibitem{GroZil}
{Grove, K. and Ziller, W.},
{Curvature and symmetry of {M}ilnor spheres},
{Ann. of Math. (2)} {\bf 152} (2000), {331--367}.




\bibitem{HP}
Haefliger, A. and Poenaru, V.,
La classification des immersions combinatoires,
Inst. Hautes \'Etudes Sci. Publ. Math. {\bf 23}
(1964), 75--91.

\bibitem{HamTay}
{Hambleton, I. and Taylor, L. R.},
{A guide to the calculation of the surgery obstruction groups
for finite groups}, {Surveys on surgery theory, {V}ol. 1},
{Ann. of Math. Stud.} Vol. {145}, {Princeton Univ. Press},
{Princeton, NJ}, {2000}, pp. {225--274}.


\bibitem{Hat}
Hatcher, A.,
Algebraic Topology.  Cambridge Univ. Press,
New York, NY, 2002.

%

\bibitem{HiM}
Hirsch, M. W. and Mazur, B.,
Smoothings of piecewise linear manifolds. Annals of Mathematics
Studies, No. 80. Princeton University Press, Princeton, NJ, 1974.

\bibitem{Hsi}
{Hsiang, W.-Y.},
{On the unknottedness of the fixed point set of differentiable
circle group actions on spheres---{P}. {A}. {S}mith conjecture},
{Bull. Amer. Math. Soc.} {\bf 70} (1964), {678--680}.

\bibitem{Hus}
{Husemoller, D.}, {Fibre bundles},
{Graduate Texts in Mathematics}, {\bf 20},
{Third Ed.}, {Springer-Verlag}, {New York}, {1994}.

\bibitem{Ja60}
{James, I. M.}
{On $H$-spaces and their homotopy groups},
{Quart. J. Math. Oxford (2)} {\bf 11} (1960), 161--179.

\bibitem{Ja63}
{James, I. M.}
{The space of bundle maps},
{Topology} {\bf 2} (1963), 45--59.

\bibitem{KPT}
{Kapovitch, V. and Petrunin, A. and Tuschmann, W.},
{Non-negative pinching, moduli spaces and bundles with
infinitely many souls}, {J. Diff. Geom.},
{\bf 71} (2005), {365--383}.  [A followup is in preparation.]

\bibitem{KM}
{Kervaire, M. A., and Milnor, J. W.},
Groups of homotopy spheres. I. Ann. of Math. (2)
{\bf 77} (1963), 504--537.


\bibitem{KiSi}
{Kirby, R. C. and Siebenmann, L. C.}
{Foundational essays on topological manifolds, smoothings, and
triangulations  (With notes by J. Milnor and M. Atiyah)}.
{Ann. of Math. Stud.} Vol. {88}, {Princeton Univ. Press},
{Princeton, NJ}, {1977}.

%



\bibitem{KS-mod}
{Kreck, M. and Stolz, S.},
{Nonconnected moduli spaces of positive sectional curvature
metrics}, {J. Amer. Math. Soc.} {\bf 6} (1993), {825--850}.

\bibitem{KS-wit}
{Kreck, M. and Stolz, S.},
{A diffeomorphism classification of {$7$}-dimensional
homogeneous {E}instein manifolds with {${\rm SU}(3)\times{\rm
SU}(2)\times{\rm U}(1)$}-symmetry},
{Ann. of Math. (2)} {\bf 127} (1988), {373--388}.

\bibitem{KerMil}
{Kervaire, M. A. and Milnor, J. W.},
{Groups of homotopy spheres. {I}},
{Ann. of Math. (2)} {\bf 77} (1963), {504--537}.






\bibitem{Lev69}
{Levine, J. P.}, {Self-equivalences of {$S^{n}\times S^{k}$}},
{Trans. Amer. Math. Soc.} {\bf 143} (1969), {523--543}.

\bibitem{Levine83}
{Levine, J. P.}, Lectures on groups of homotopy spheres,
Algebraic and Geometric Topology (New Brunswick, NJ, 1983),
Lecture Notes in Math. Vol. 1126, Springer-Verlag,
Berlin-New York, 1985, pp. 62--95.


%
\bibitem{MM}
{Madsen, I. and Milgram, R. J.},
{The classifying spaces for surgery and cobordism of manifolds},
{Annals of Mathematics Studies},
Vol. {92},
{Princeton University Press},
{Princeton, N.J.},
{1979}.

\bibitem{MTW}
{Madsen, I. and Taylor, L. R. and Williams, B.},
{Tangential homotopy equivalences},
{Comment. Math. Helv.},
{\bf 55},
{1980},
{445--484}.

\bibitem{MasSch}
{Masuda, M. and Schultz, R.},
{On the nonuniqueness of equivariant connected sums},
{J. Math. Soc. Japan},
{\bf 51},
{1999},
{413--435}.

\bibitem{My}
May, J. P.  Simplicial objects in algebraic topology (reprint of
the 1967 original).  Chicago Lectures in Mathematics.  University of
Chicago Press, Chicago, IL, 1992.




\bibitem{Mil59}
{Milnor, J.},
{On spaces having the homotopy type of a {${\rm CW}$}-complex},
{Trans. Amer. Math. Soc.},
{\bf 90},
(1959),
{272--280}.


\bibitem{Milnor-Stasheff}
{Milnor, J. W., and Stasheff, J. D.},
{Characteristic classes. Annals of Mathematics Studies, No. 76}.
Princeton University Press, Princeton, NJ, 1974.


\bibitem{Mukai}
{Mukai, J.},
{The $S^1$-transfer map and homotopy groups of suspended complex
projective spaces},
Math. J. Okayama Univ. {\bf 24} (1982),
179 -- 200.


%


\bibitem{nov}
{Novikov, S. P.,} {Homotopically equivalent smooth manifolds. I. (Russian)}
Izv. Akad. Nauk SSSR Ser. Mat. {\bf 28} (1964), 365--474.
English transl., Amer. Math. Soc. Transl. (2)
{\bf  48} (1965), 271--396.




\bibitem{Pet70}
Petrie, T.,
The Atiyah-Singer invariant, the Wall groups $L\sb{n}(\pi ,\,1)$,
and the function $(te\sp{x}+1)/(te\sp{x}-1)$.
Ann. of Math. {\bf 92} (1970), 174--187.

\bibitem{Qui}
{Quillen, D.},
{The {A}dams conjecture},
{Topology},
{\bf 10}
{1971},
{67--80}.

\bibitem{Q1}
Quinn, F. {A geometric formulation of surgery}. Topology of
Manifolds (Proc. Inst., Univ. of
Georgia, Athens, GA, 1969), Markham, Chicago, IL, 1970,
pp. 500--511.

\bibitem{Ran-book}
{Ranicki, A.},
{Algebraic and geometric surgery},
{Oxford University Press},
{Oxford, U. K.},
{2002}.



\bibitem{Roth70}
Rothenberg, M.,
Differentiable group actions on spheres. Proc.
Adv. Study Inst. on Algebraic Topology (Aarhus Univ., Aarhus, DK,
1970), Vol. II, pp. 455--475. Matematisk Institut, Aarhus Univ.,
Aarhus, DK, 1970.

\bibitem{Ro}
Rourke, C. P., {The Hauptvermutung according to Casson and Sullivan}.
The Hauptvermutung book,
K-Monogr. Math., 1, Kluwer Acad. Publ., Dordrecht, 1996,
pp. 129--164.

\bibitem{RoSa}
Rourke, C. P., and Sanderson, B. J.  {$\Delta$-sets. {I}. Homotopy
theory.} Quart. J. Math. Oxford Ser. (2) {\bf 22} (1971), 321--338.

\bibitem{San64}
{Sanderson, B. J.},
{Immersions and embeddings of projective spaces},
{Proc. London Math. Soc. (3)},
{\bf 14},
(1964),
{137--153}.

\bibitem{Sc1}
Schultz, R., Smooth structures on $S^{p}\times S^{q}$,
Ann. of Math. {\bf 90} (1969), 187--198.

\bibitem{Sch71}
{Schultz, R.},
{On the inertia group of a product of spheres},
{Trans. Amer. Math. Soc.},
{\bf 156},
(1971),
{137--153}.

\bibitem{Sch73}
{Schultz, R.},
{Homotopy decompositions of equivariant function spaces. {I}},
{Math. Z.},
{\bf 131},
(1973),
{49--75}.

\bibitem{Sch87}
{Schultz, R.},
{Homology spheres as stationary sets of circle actions},
{Michigan Math. J.},
{\bf 34},
(1987),
{183--200}.




%
%




\bibitem{Seg70}
Segal, D. M., On the stable homotopy of quaternionic and complex
projective spaces. Proc. Amer. Math. Soc. {\bf 25} (1970), 838--841.

\bibitem{Stasheff63}
Stasheff, J., A classification theorem for fibre spaces,
Topology {\bf 2} (1963), 239--246.

\bibitem{Tay}
Taylor, L. R.,  {Divisibility of the index with applications
to inertia groups and $H^*(G/O)$},  Notices Amer. Math.
Soc. {\bf 21} (1974), A--424.

\bibitem{Tod}
{Toda, H.},
{Composition methods in homotopy groups of spheres},
{Annals of Mathematics Studies, No. 49},
{Princeton University Press},
{Princeton, N.J.},
{1962}.

\bibitem{Wal-book}
{Wall, C. T. C.},
{Surgery on compact manifolds},
{Mathematical Surveys and Monographs},
{\bf 69},
(Second, Ed.,
Edited and with a foreword by A. A. Ranicki),
{American Mathematical Society},
{Providence, RI},
{1999}.



\bibitem{Wh} Whitehead, G., Elements of Homotopy Theory.
Graduate Texts in Mathematics, Vol. 61. Springer-Verlag,
New York-Berlin, 1978.



\bibitem{Zee}
Zeeman, E. C., A proof of the comparison theorem for
spectral sequences. Proc. Cambridge Philos. Soc.
{\bf 53} (1957), 57--62.

\end{thebibliography}
\end{document}